%% file: Paper.tex
\documentclass[11pt,oneside,reqno]{amsart}

\usepackage[utf8]{inputenc}
\usepackage[margin=2cm]{geometry}
\usepackage{amsmath,amsfonts,amsthm,amssymb}
\usepackage{graphicx} 
\usepackage[backend=biber,style=trad-abbrv]{biblatex}
\usepackage{xcolor}
\usepackage{tikz}
\usetikzlibrary{positioning}

\usepackage{hyperref}
\hypersetup{
  colorlinks   = true, 
  urlcolor     = blue, 
  linkcolor    = red, 
  citecolor   = red 
}
\usepackage{bbm}
\usepackage{comment}
\usepackage{enumerate}

\usepackage[normalem]{ulem}

\newcommand\CL{\mathcal{L}}

\newcommand\CF{\mathcal{F}}

\newcommand\Lie{\text{Lie}}

\newcommand\IE{\mathbb{E}}
\newcommand\IP{\mathbb{P}}
\newcommand\IN{\mathbb{N}}

\newcommand\IR{\mathbb{R}}

\newcommand\df{\textup{d}}

\newcommand\W{\widetilde{W}}
\newtheorem{theorem}{Theorem}[section]
\theoremstyle{plain}
\newtheorem{proposition}[theorem]{Proposition}
\newtheorem{assumption}[theorem]{Assumption}
\theoremstyle{definition}

\theoremstyle{definition}
\newtheorem{remark}[theorem]{Remark}
\theoremstyle{plain}
\newtheorem{lemma}[theorem]{Lemma}
\theoremstyle{definition}
\newtheorem{definition}[theorem]{Definition}

\theoremstyle{plain}

\numberwithin{equation}{section}

\title[Generalized Relativistic Langevin dynamics]{Asymptotic analysis for the Generalized Relativistic Langevin Equation}
\author{Ethan Baker$^1$, Manh Hong Duong$^1$ and Hung Dang Nguyen$^2$}
\address{$^1$ School of Mathematics, University of Birmingham, Birmingham, UK}

\address{$^2$ Department of Mathematics, University of Tennessee, Knoxville, Tennessee, USA}

\date{}

\begin{document}

\begin{abstract}
In this paper, we study a non-Markovian generalized relativistic Langevin equation (GRLE). We show that when the memory kernel is a sum of exponentials, the GRLE is equivalent to a Markovian system with added variables. We establish the well-posedness and polynomial ergodicity, obtaining an algebraic rate of convergence to the unique Gibbs distribution. From the Markovian GRLE, we recover the relativistic underdamped Langevin dynamics in a small-noise limit, as well as the classical (non-relativistic) generalized Langevin dynamics in the Newtonian limit. 
\end{abstract}

\maketitle

\section{Introduction}
\subsection{The generalized relativistic Langevin equation}
\noindent In this paper, we consider the following \textit{generalized relativistic Langevin equation (GRLE)}~\cite{chen2023persistent,petrosyan_relativistic_2022,zadra_translation-invariant_2023}:

\begin{subequations}
\label{GRLE}
 \begin{align}
 \label{qreletavistic}
\df q(t)&=\nabla K(p(t))\,\df t,\\
\df p(t)&=-\nabla U(q(t))\,\df t-\gamma\nabla K(p(t))\,\df t-\int_0^t \eta(t-s)\nabla K(p(s))\,\df s\,\df t\notag
\\&\qquad+\sqrt{2\gamma\beta^{-1}}\, \df W(t)+ F(t)\,\df t.\label{prelativistic}
\end{align}   
\end{subequations}
This stochastic differential equation describes the motion of a \textit{relativistic} particle with position $q$ and momentum $p$, both taking values in $\IR^d$. The first equation \eqref{qreletavistic} is simply a relation between the position and the momentum in the special relativistic setting where $K$ is the relativistic kinetic energy
\begin{equation} 
\label{relativistic KE}
K(p)=c\sqrt{m^2c^2+|p|^2}.
\end{equation}
Here, $c>0$ is the speed of light, and $m$ is the particle's mass at rest. The second equation 
\eqref{prelativistic} posits that the particle moves under the influence of different forces, namely, (i) an external force $-\nabla U$ where $U: \mathbb{R}^d\mapsto \mathbb{R}$ is an external potential; (ii) a friction $-\gamma \nabla K$ where $\gamma\geq 0$ is the friction coefficient; (iii) a delayed drag force from the fluid on the particle given by the integral $\int_0^t \eta(t-s)\nabla K(p(t))\,\df s\,\df t$ 
where $\eta:[0,\infty)\mapsto \IR^{d\times d}$ is a memory kernel; (iv) a random perturbation that consists of two components: a standard $d$-dimensional Brownian motion $(W(t))_{t\geq0}$ and a mean-zero stationary Gaussian process $F(t)$ satisfying the fluctuation–dissipation theorem \cite{kubo1966fluctuation, debbasch_relativistic_1997}:
\begin{equation}
\label{fluctuationdissipationtheorem}
    \mathbb{E}[F_i(t)F_j(s)]=\beta^{-1}(\eta(t-s))_{ij} \delta_{ij}, \qquad t\geq s,
\end{equation}
where $\delta_{ij}$ denotes the Kronecker delta. Finally, the parameter $\beta$ is the inverse temperature.
  
The system \eqref{qreletavistic}-\eqref{prelativistic} has been considered in \cite{chen2023persistent,petrosyan_relativistic_2022,zadra_translation-invariant_2023}\footnote{More precisely $\gamma=0$ in these papers.}---in particular in \cite{petrosyan_relativistic_2022,zadra_translation-invariant_2023} it is derived from a first-principle particle-bath Lagrangian. It is an extension of the classical (non-relativistic) generalized Langevin dynamics (GLE) \cite{ottobre_asymptotic_2011,pavliotis_stochastic_2014}, which is obtained from \eqref{qreletavistic}-\eqref{prelativistic} by replacing the relativistic kinetic energy \eqref{relativistic KE} by the classical one $K(p)=|p|^2/(2m)$,  to comply with Einstein’s theory of special relativity. Extending the classical Langevin dynamics to the relativistic systems is vital to many areas of physics including relativistic fluids/plasmas, effective field theories of dissipative hydrodynamics, and relativistic viscous electron flow
in graphene, just to name a few. The first relativistic Langevin model was introduced in \cite{debbasch_relativistic_1997}. Since then, it has become an active area of research, with a vast literature devoting to the study of relativistic systems. We refer the reader to \cite{debbasch2007relativistic, dunkel2009relativistic} for further information on various topics of the relativistic Langevin models. 

Analogous to the non-relativistic GLE, the system \eqref{qreletavistic}–\eqref{prelativistic} exhibits non-Markovian dynamics owing to the presence of the memory kernel. In conjunction with the nonlinearity induced by the relativistic kinetic energy, this renders the mathematical analysis considerably more intricate. It is known that, when the memory kernel is a sum of exponentials (also known as the Prony series memory kernel), the classical GLE can be equivalently formulated as a Markovian system by introducing additionally auxiliary variables \cite{kupferman_fractional_2004}. The Markovian system is more computationally tractable, and various efficient numerical schemes have been developed for its analysis \cite{baczewski2013numerical, duong_accurate_2022}.  Under this Markovian formulation, in \cite{ottobre_asymptotic_2011,duong2024asymptotic}, the authors prove the geometric ergodicity and obtain an exponential rate of convergence to the equilibrium,  which is an important problem in statistical physics, molecular dynamics and sampling techniques. In addition, \cite{ottobre_asymptotic_2011} also derives a white-noise limit of the Markovian GLE system, that is to show that, when the noise term is appropriately rescaled such that the correlation function becomes a Dirac measure, the GLE system converges to the underdamped Langevin dynamics, thus eliminating the auxiliary variables. 

Concerning relativistic Langevin dynamics, in addition to the questions of ergodicity and small mass or small noise limits that arise in classical models, another important issue is the Newtonian limit. Specifically, one seeks to show that as the speed of light tends to infinity, the relativistic system converges to the corresponding classical system. Establishing this limit ensures the consistency of the relativistic model with its classical counterpart in the non-relativistic regime. These topics for the system \eqref{GRLE} in the absence of the memory term have been studied recently by various authors \cite{alcantara2011relativistic,calogero2004newtonian, calogero2012exponential,arnold2025trend,duong_trend_2024,duong2026ergodicity}.
\subsection{Summary of the main results}
The aim of this paper is to extend the aforementioned works to the relativistic generalized Langevin dynamics \eqref{GRLE}. Under suitable assumptions on the potential $U$ and the kernel $\eta$, our main findings can be summarized as follows.
\begin{enumerate}
    \item (\textbf{Markovian formulation and well-posedness}) Proposition \ref{prop: Markovian formulation} shows that when the memory kernel {can be expressed as an exponential form}, cf. \eqref{markappmemorykernel}, the GRLE \eqref{GRLE} is equivalent to a Markovian system by introducing auxiliary variables. The well-posedness of this Markovian system is proved in Theorem \ref{existenceuniquenessmarkovtheorem}.
    \item (\textbf{Polynomial ergodicity}) Theorem \ref{polynomialerogicity} establishes a polynomial rate of convergence to the unique equilibrium measure, which is a Gibbs distribution, for the Markovian system.
    \item (\textbf{White-noise limit}) In Theorem \ref{relativisticwhitenoiselimit}, we show that under appropriate rescaling, the Markovian system is well approximated by the relativistic underdamped Langevin dynamics describing the evolution of the position and momentum variables, thus eliminating the auxiliary variables and obtaining the friction coefficient from the parameters of the Markovian GRLE system.
    \item (\textbf{Newtonian limit}) In Theorem \ref{relativisticnewtonianlimit}, we derive the Newtonian limit for the Markovian GRLE, recovering the Markovian GLE system when the parameter $c$, representing the speed of light, tends to infinity.
    \end{enumerate}    
The above results, as well as their relationship to existing results, are summarised in Figure \ref{fig:diagram}.
In order to precisely formulate the above results, we make the following assumption on the growth of the external potential.

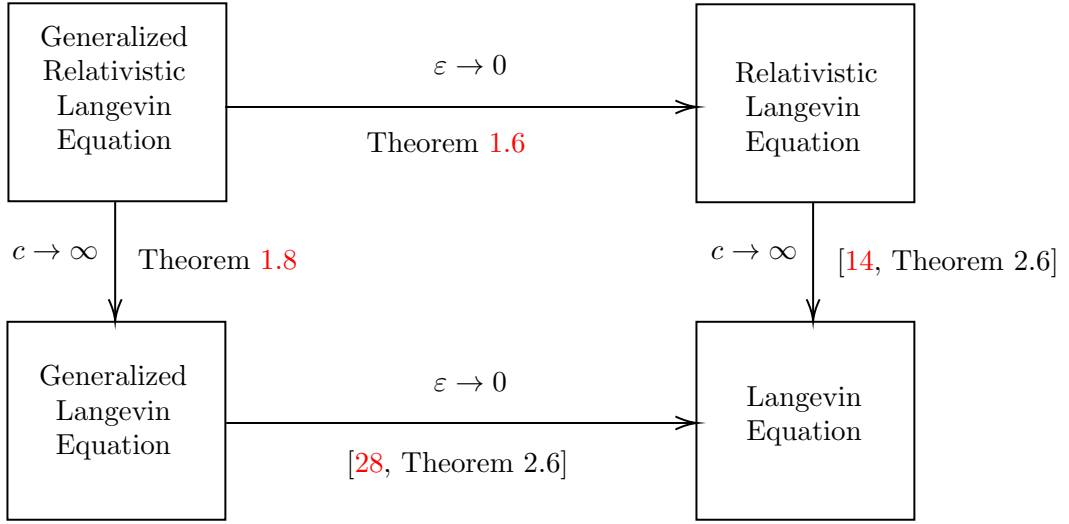
\begin{figure}[ht]
    \centering
    \input{diagram}
    \caption{Diagram of the relationships between Langevin equations.}
    \label{fig:diagram}
\end{figure}

\begin{assumption}
\label{UAssumptions}
The potential $U\in C^{\infty}\big(\IR^d;[1,\infty)\big)$ satisfies the following:
\label{potentialassumptions}
    \begin{enumerate}
        \item $\displaystyle{\langle\nabla U(q),q\rangle\geq a_1|q|^{\lambda+1}-a_2}$ for some $a_1,a_2>0$ and $\lambda\geq1$ for all $q\in\IR^d$; and,
        \item \label{Uqtest} $\displaystyle{\frac{1}{a_3}|q|^{\lambda+1}-a_3\leq U(q)\leq a_3\big(1+|q|^{\lambda+1}\big)}$ for some $a_3>0$.
    \end{enumerate}
\end{assumption}

\subsection{Markovian formulation}

We now describe the results in more detail. We start by introducing a Markovian Formulation for the GRLE. This is drawn upon the Markovian framework of the GLE as found in \cite{kupferman_fractional_2004,  leimkuhler_ergodic_2019, pavliotis_stochastic_2014}. More specifically, following \cite[Definition 1.3]{pavliotis_stochastic_2014}, two processes $X,Y$ are said to be equivalent if $X$ and $Y$ have the same finite dimensional distributions. Under the assumption that the memory kernel $\eta$ admits a special form of exponential functions, we assert that $(q,p)$ from \eqref{GRLE} is indeed equivalent to a Markov process solving a system of SDEs. This is rigorously verified through Proposition \ref{prop: Markovian formulation}, which can be regarded as a relativistic analogue of \cite[Proposition 8.1]{pavliotis_stochastic_2014}.

\begin{proposition}
\label{prop: Markovian formulation}
    Let $\Lambda\in\IR^{d\times k}$, and ${\bf{A}}\in\IR^{k\times k}$ be symmetric positive definite. Then, given the memory kernel can be written as,
 \begin{align}
 \label{markappmemorykernel}
        \eta(t)=\Lambda e^{-\bf{A}t}\Lambda^{\text{T}},
    \end{align}
$(q,p)$ given by \eqref{GRLE} is equivalent to the process $(q,p)$ solving,
\begin{subequations}
\label{markovapproximation}
    \begin{align}
        \df q(t)&=\nabla K(p(t))\, \df t,\\
        \df p(t)&=-\nabla U(q(t)) \, \df t -\gamma \nabla K(p(t))\df t+ \ \Lambda z(t)\, \df t+\sqrt{2\gamma\beta^{-1}}\,\df W(t), \label{markappp}\\
        \df z(t)&=-\Lambda^T \nabla K(p(t))\,\df t -{\bf{A}} z(t)\,\df t+\Sigma\, \df \W(t),
    \end{align}
\end{subequations}
for some $k$-dimensional Brownian motion $\W$, where $z:[0,\infty)\to \IR^{{k}}$ with the initial distribution given by,

\begin{equation*}
    z(0)\sim \mathcal{N}(0,\beta^{-1}{\bf{I}}_k),
\end{equation*} and the matrix $\Sigma\in \IR^{k\times k}$ satisfies $\Sigma\Sigma^{\text{T}}=2\beta^{-1}\bf{A}$.
\end{proposition}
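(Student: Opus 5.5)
The plan is to solve the $z$-equation in \eqref{markovapproximation} explicitly by variation of constants and substitute the result into \eqref{markappp}. Since ${\bf A}$ is symmetric positive definite, $e^{-{\bf A}t}$ is well defined and decays exponentially. Treating $\nabla K(p(\cdot))$ as a given continuous adapted input, Itô's formula applied to $e^{{\bf A}t}z(t)$ gives
\begin{equation*}
z(t)=e^{-{\bf A}t}z(0)-\int_0^t e^{-{\bf A}(t-s)}\Lambda^{T}\nabla K(p(s))\,\df s+\int_0^t e^{-{\bf A}(t-s)}\Sigma\,\df \W(s).
\end{equation*}
Multiplying by $\Lambda$, the middle term becomes $-\int_0^t \Lambda e^{-{\bf A}(t-s)}\Lambda^T\nabla K(p(s))\,\df s=-\int_0^t\eta(t-s)\nabla K(p(s))\,\df s$, which is exactly the memory term of \eqref{prelativistic} by \eqref{markappmemorykernel}. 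The remaining terms define the candidate noise
\begin{equation*}
F(t):=\Lambda e^{-{\bf A}t}z(0)+\int_0^t\Lambda e^{-{\bf A}(t-s)}\Sigma\,\df\W(s).
\end{equation*}
With this choice, the $(q,p)$-equations of \eqref{markovapproximation} coincide pathwise with \eqref{GRLE}.

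Next I will identify the law of $F$. Taking $z(0)\sim\mathcal N(0,\beta^{-1}{\bf I}_k)$ independent of $(W,\W)$, $F$ is a centered Gaussian process. For $t\ge s$, the Itô isometry and independence give
\begin{equation*}
\IE[F(t)F(s)^T]=\Lambda e^{-{\bf A}t}\Big(\beta^{-1}{\bf I}_k+\int_0^s e^{{\bf A}r}\Sigma\Sigma^Te^{{\bf A}r}\,\df r\Big)e^{-{\bf A}s}\Lambda^T .
\end{equation*}
Here I use the symmetry of ${\bf A}$, so that $e^{-{\bf A}s}$ is its own transpose. Since $\Sigma\Sigma^T=2\beta^{-1}{\bf A}$ commutes with $e^{{\bf A}r}$, the integral equals $\beta^{-1}(e^{2{\bf A}s}-{\bf I}_k)$. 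Hence the covariance is $\beta^{-1}\Lambda e^{-{\bf A}(t-s)}\Lambda^T=\beta^{-1}\eta(t-s)$. This shows that $F$ is stationary and satisfies \eqref{fluctuationdissipationtheorem}, read as the matrix identity $\IE[F(t)F(s)^T]=\beta^{-1}\eta(t-s)$.

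Finally I will conclude equivalence in the sense of \cite[Definition 1.3]{pavliotis_stochastic_2014}. A stationary centered Gaussian process is determined in law by its covariance. So jointly with the independent Brownian motion $W$ and the initial data, the input $(W,F)$ built above has the same law as the input in \eqref{GRLE}. I then need $(q,p)$ to be a measurable functional of the initial condition and the driving pair $(W,F)$, so that equal input laws give equal finite dimensional distributions of $(q,p)$. This requires pathwise uniqueness, and hence a Yamada--Watanabe type argument, for \eqref{GRLE} viewed as a path-dependent SDE driven by $W$ with $F$ as a continuous additive forcing. I expect this to be the main obstacle. The relativistic structure helps here, because $\nabla K(p)=cp/\sqrt{m^2c^2+|p|^2}$ is bounded by $c$ and globally Lipschitz, so the drift and the memory term are globally Lipschitz in $p$. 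The potential force $\nabla U$ is only locally Lipschitz, so I would use a localization argument, stopping at exit times of balls. Non-explosion would come from the Lyapunov estimates behind Theorem \ref{existenceuniquenessmarkovtheorem}, or more directly from the bound $|\dot q|\le c$, which keeps $q$ in a ball on finite time intervals. With this, the $(q,p)$-component of the Markovian system and the solution of \eqref{GRLE} have the same finite dimensional distributions.
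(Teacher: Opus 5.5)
Your proposal is correct and takes the same route as the paper: solve the $z$-equation by Duhamel's formula, recognise $\Lambda e^{-\mathbf{A}(t-s)}\Lambda^T$ as the memory kernel, and collect the remaining Ornstein--Uhlenbeck terms into the noise $F$. Your explicit covariance computation supplies the ``routine calculation'' the paper defers to \cite[Proposition 8.1]{pavliotis_stochastic_2014}, and your uniqueness-in-law step, which the paper leaves implicit, is a sound addition; since the noise in \eqref{GRLE} is additive and $|\nabla K|\le c$ confines $q$ on bounded time intervals, it can even be done pathwise as a random Volterra equation, without Yamada--Watanabe.
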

The following theorem establishes the well-posedness of the Markovian system \eqref{markovapproximation}.
\begin{theorem}
\label{existenceuniquenessmarkovtheorem}
Under Assumption \ref{UAssumptions}, \eqref{markovapproximation} has a unique strong Markov solution.
\end{theorem}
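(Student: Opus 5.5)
The approach is the standard one for SDEs with locally Lipschitz coefficients: obtain a unique local solution up to an explosion time, then rule out explosion with a Lyapunov function.

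\textbf{Local existence and uniqueness.} Write $X=(q,p,z)\in\IR^{2d+k}$ and view \eqref{markovapproximation} as $\df X=b(X)\,\df t+G\,\df B(t)$. Here the diffusion matrix $G$ is constant, with blocks $\sqrt{2\gamma\beta^{-1}}\,\mathbf{I}_d$ and $\Sigma$, and $B=(W,\W)$.

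The drift $b$ is $C^\infty$ for the following reasons:
\begin{itemize}
\item $U\in C^\infty$ by Assumption \ref{UAssumptions};
\item $K(p)=c\sqrt{m^2c^2+|p|^2}$ is smooth because $m^2c^2>0$;
\item $\nabla K(p)=cp/\sqrt{m^2c^2+|p|^2}$ is bounded by $c$ and globally Lipschitz.
\end{itemize}
Hence $b$ is locally Lipschitz. Standard truncation (stopping times $\tau_n=\inf\{t:|X(t)|\ge n\}$) then gives a unique strong solution up to the explosion time $\tau=\lim_n\tau_n$. This solution is strong Markov on $[0,\tau)$ by the usual argument for SDEs with locally Lipschitz coefficients.

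\textbf{Non-explosion.} It remains to show $\tau=\infty$ almost surely. I would use the Hamiltonian
\[
H(q,p,z)=U(q)+K(p)+\tfrac12|z|^2 .
\]
$H$ is smooth, bounded below by $1$, and coercive:
\begin{itemize}
\item $U(q)\ge a_3^{-1}|q|^{\lambda+1}-a_3$ by Assumption \ref{UAssumptions}(\ref{Uqtest});
\item $K(p)\ge c|p|$;
\item $\tfrac12|z|^2$ controls $z$.
\end{itemize}

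Applying the generator $\CL$ of \eqref{markovapproximation}, the conservative terms cancel. The terms $\nabla U\cdot\nabla K$ cancel as usual, and the coupling terms cancel as $\langle\nabla K,\Lambda z\rangle-\langle z,\Lambda^T\nabla K\rangle=0$. We are left with
\[
\CL H=-\gamma|\nabla K|^2+\gamma\beta^{-1}\Delta K-\langle \mathbf{A}z,z\rangle+\tfrac12\operatorname{tr}(\Sigma\Sigma^T).
\]
Since $\Delta K\le (d-1)/m+d/m$ is bounded (indeed $|\nabla^2K|\le 1/m$), $\mathbf{A}$ is positive definite, and $\Sigma\Sigma^T=2\beta^{-1}\mathbf{A}$, we get
\[
\CL H\le C=\gamma\beta^{-1}d/m+\beta^{-1}\operatorname{tr}\mathbf{A}.
\]

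By It\^o's formula up to $\tau_n\wedge t$,
\[
\IE H(X(t\wedge\tau_n))\le \IE H(X(0))+Ct .
\]
Coercivity gives $\inf_{|x|=n}H\to\infty$, so $\IP(\tau_n\le t)\le (\IE H(X_0)+Ct)/\inf_{|x|=n}H\to0$. Hence $\tau=\infty$ almost surely.

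If the initial condition does not have finite $\IE H$, I would condition on $X(0)=x$ deterministic and integrate, which is fine since $z(0)$ is Gaussian anyway. Global uniqueness follows from pathwise uniqueness on each $[0,\tau_n]$, and the strong Markov property is inherited from the locally Lipschitz theory.

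\textbf{Main obstacle.} The only genuinely computational step is bounding $\CL H$, specifically checking that $\Delta K$ stays bounded and that the cross terms cancel exactly. Everything else is routine. Only the lower growth bound on $U$ from Assumption \ref{UAssumptions}(\ref{Uqtest}) is needed; part (1) of the assumption matters later for ergodicity, not here.
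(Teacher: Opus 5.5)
Your proof is correct and follows the paper's argument. The paper checks local Lipschitz continuity of the drift (via the Lipschitz bound on $\nabla K$) and then invokes Khasminskii's criterion with $V=H+1$ and $\CL H\le C$; your stopping-time argument is simply that criterion written out, using the same computation of $\CL H$.
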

The proof of this theorem is rather standard which relies on the Lipschitz property of $\nabla K(p)$ and a Lyapunov condition, see Section \ref{Markovian Approximation:Proof} for a further discussion of this point.

In order to study asypmtotic behaviors of the Markovian system \eqref{markovapproximation}, throughout the rest of the paper, we will consider a special case of \eqref{markovapproximation} that is widely explored in the literature \cite{duong2024asymptotic,  duong_accurate_2022, ottobre_asymptotic_2011}. More specifically,  we choose $k=Md$ for some $M\in\IN$, $\alpha=(\alpha_1,...,\alpha_M)\in(0,\infty)^M$ and $\lambda=(\lambda_1,...,\lambda_M)\in(0,\infty)^M$. We consider
\[\textbf{A}=\begin{pmatrix}
    \alpha_1{\textbf{I}}_d & \textbf{0} & \cdots & \textbf{0}\\
    {\bf{0}} & \alpha_2{\textbf{I}}_d & \cdots & {\bf{0}}\\
    \vdots & \vdots & \ddots & \vdots &\\
    \bf{0} & \bf{0} & \cdots & \alpha_M{\textbf{I}}_d,
\end{pmatrix}\]
and
\[\Lambda= {\begin{pmatrix}
    \lambda_1{\bf{I}}_d & \lambda_2{\bf{I}}_d &\cdots & \lambda_M{\bf{I}}_d
\end{pmatrix}}.\]
In view of \eqref{markappmemorykernel}, we restrict to the class of memory kernels (known as the Prony series memory kernel) given by
    \begin{equation}
\label{exponentialmemorykernel}
    \eta(t)=\sum_{i=1}^M \lambda_i^2 e^{-\alpha_i t}{\bf{I}}_d.
\end{equation}
   We then choose $z=(z_1,...,z_M)$ and $\W=(W_1,...,W_M)$, where $z_i\in\IR^d$ and $W_i$ are independent $d$-dimensional Brownian motions for $i=1,...,M$. Then, \eqref{markovapproximation} is reduced to
\begin{subequations}
\label{GRLEpqz}
 \begin{align}
 \label{qrelmarkov}
 \df q(t)&= \nabla K(p(t))\,\df t,\\
\df p(t)&=-\nabla U(q(t))\,\df t-\gamma \nabla K(p(t))\,\df t+\sum_{i=1}^M \lambda_i z_i(t)\,\df t+\sqrt{2\gamma\beta^{-1}}\, \df W(t), \label{prelmarkov}\\
\label{zrelmarkov}
\df z_i(t)&=-\lambda_i\ \nabla K(p(t))\,\df t-\alpha_i z_i(t)\,\df t+\sqrt{2\alpha_i\beta^{-1}}\,\df W_i(t),\quad i=1,\ldots, M,
\\q(0)&= q_0,\quad p(0)=p_0,\quad z_i(0)=z_{i,0}.\notag 
\end{align}
\end{subequations}

\subsection{Polynomial ergodicity} We turn to the asymptotic analysis of \eqref{GRLEpqz}, the first topic of which is the large-time stability. As a consequence of Theorem \ref{existenceuniquenessmarkovtheorem}, we may introduce the transition probabilities of $(q,p,z)$ solving \eqref{GRLEpqz} by
\begin{align*}
    P_t(x,B) = \IP\left[(q(t),p(t),z(t))\in B|(q(0),p(0),z(0))=x\right],
\end{align*}
which is well-defined for all $x\in \IR^{(2+M)d}$ and Borel sets $B\subset \IR^{(2+M)d}$. We recall that for a measurable space $(\Omega,\CF)$, the total variation norm for two probability measures $\mu$ and $\upsilon$ is given by,
    \[\left\|\mu-\upsilon\right\|_{\text{TV}}=\sup_{A\in\CF}\left|\mu(A)-\upsilon(A)\right|.\]  
We consider the following Gibbs distribution associated to \eqref{GRLEpqz}
\begin{equation}
    \label{Gibbs}
    \df\rho_{\beta}(q,p,z)=\frac{1}{Z}e^{-\beta H(q,p,z)} \,\df q  \df p \df z ,
    \end{equation}
where  the Hamiltonian $H$ is given by
\begin{equation}
\label{Hamiltonian} 
H\left(q,p,z\right)=U(q)+K(p)+\frac{1}{2}|z|^2, 
\end{equation}
and \begin{equation*}
        Z=\int_{\IR^{(2+M)d}}e^{-\beta H(q,p,z)}\df q\df p\df z.
\end{equation*}
denotes the normalisation constant.

The next result of the present paper characterizes the long-time behaviour of  \eqref{GRLEpqz} and obtains a polynomial rate of convergence to the unique invariant probability measure.
\begin{theorem}
\label{polynomialerogicity}
    Under Assumption \ref{UAssumptions}, let $(q,p,z)$ denote the solution to \eqref{GRLEpqz}. Suppose that either
    
    (i) $\gamma>0$; or
    
    (ii) $\gamma=0$ and $\nabla U$ is Lipschitz continuous.
    
Then the following hold:
    
    (1) for $c>0$ sufficiently large, the Gibbs distribution $\rho_\beta$ defined in \eqref{Gibbs} is the unique invariant probability measure for the process $(q,p,z)$; and 
    
    (2) for all $r\in\IN$, there exists $c>0$ sufficiently large and $V_r:\IR^{(2+M)d}\to[1,\infty)$ such that
   \begin{equation}
   \label{eq: ergodicity estimate}
    \left\|P_t(x,\cdot)- \rho_\beta\right\|_\text{TV}\leq \frac{C}{(1+t)^r}V_r(x),   \quad t\ge 0,
   \end{equation} 
    for some $C>0$ independent of $t$ and $x\in \IR^{(2+M)d}$.
\end{theorem}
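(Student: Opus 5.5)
We will follow the weak-Harris / subgeometric Lyapunov approach of Douc–Fort–Guillin and Hairer. We need three ingredients: invariance of $\rho_\beta$, a minorization on compact sets coming from hypoellipticity and controllability, and a family of Lyapunov functions $V_r$ with a subgeometric drift. Uniqueness in (1) then follows from the minorization together with the drift. The estimate \eqref{eq: ergodicity estimate} follows from the standard theorem: if $\mathcal{L}V_r\le -c_r V_r^{\theta_r}+C\mathbf{1}_{B}$ with $B$ a small compact set, then the TV distance decays like $t^{-\theta_r/(1-\theta_r)}$. We will choose $\theta_r$ close to $1$ (and $V_r$ accordingly) so that this exponent exceeds $r$.

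\textbf{Step 1 (invariance).} Write the generator as $\mathcal{L}=\mathcal{L}_{\mathrm{Ham}}+\mathcal{L}_{\mathrm{OU}}$. The first part is the Hamiltonian-type antisymmetric transport, which is $\{H,\cdot\}$ plus the skew coupling $\sum_i\lambda_i(z_i\cdot\nabla_p-\nabla K\cdot\nabla_{z_i})$. The second part consists of the dissipative Ornstein–Uhlenbeck parts in $p$ (when $\gamma>0$) and in $z_i$, each of the form $\beta^{-1}e^{\beta H}\nabla\cdot(e^{-\beta H}\nabla\,\cdot)$. A direct computation of $\mathcal{L}^*e^{-\beta H}=0$ then shows that $\rho_\beta$ is invariant.

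\textbf{Step 2 (minorization).} Noise acts directly on the $z_i$, and on $p$ if $\gamma>0$. For Hörmander's condition, the commutator $[\partial_{z_i},X_0]$ of the noise fields with the drift $X_0$ produces $\lambda_i\partial_p$. Then $[\partial_p,X_0]$ produces $\nabla^2K(p)\partial_q$ plus further terms. Since $\nabla^2K(p)=c(m^2c^2+|p|^2)^{-3/2}\big((m^2c^2+|p|^2)I-pp^T\big)$ is positive definite, the Lie algebra spans the whole space at every point. This gives a smooth positive transition density. Irreducibility follows from a Stroock–Varadhan control argument. We steer $z$ freely, hence $p$ through $\lambda_iz_i$, hence $q$ through $\nabla K(p)$. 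Because $\nabla K$ maps $\mathbb{R}^d$ onto the open ball of radius $c$, we reach any $q$ by taking enough time. Together these show that compact sets are small.

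\textbf{Step 3 (Lyapunov functions, the main obstacle).} The difficulty is that $\nabla K$ is bounded by $c$: the friction $-\gamma\nabla K$ and the coupling $-\lambda_i\nabla K$ only yield dissipation of order $|p|^0$ in $p$. This is why we expect polynomial rather than exponential convergence. We will take
\[
V_r=H^{n}+\varepsilon H^{n-1}\Big(\langle p,q\rangle+\sum_i\kappa_i\langle z_i,p\rangle\Big)
\]
with $n=n(r)$ large and $\varepsilon$ small. We have $\mathcal{L}H=-\gamma|\nabla K|^2+\gamma\beta^{-1}\Delta K-\sum_i\alpha_i|z_i|^2+\text{const}$, which dissipates in $z$ only. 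The cross term $\langle p,q\rangle$ contributes $\langle\nabla K(p),p\rangle-\langle\nabla U,q\rangle\le c|p|-a_1|q|^{\lambda+1}+\dots$, which gives dissipation in $q$. The cross term $\langle z_i,p\rangle$ contributes $-\lambda_i\langle\nabla K,p\rangle\approx-\lambda_i c|p|$ for large $|p|$, which gives the dissipation in $p$. The remaining terms, such as $\langle z_i,\nabla U\rangle$ and $|z|^2$, are absorbed using the $-|z|^2$ term from $\mathcal{L}H$ and Young's inequality. Absorbing $\langle z_i,\nabla U(q)\rangle$ requires $|\nabla U|$ to be controlled by $H$. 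This is straightforward when $\gamma>0$, since the $p$-friction supplies extra room. When $\gamma=0$ we use the Lipschitz assumption, $|\nabla U(q)|\lesssim1+|q|$. Requiring $c$ large ensures that the $c|p|$ gain dominates the bounded errors from $\nabla K$, such as $|\nabla K|\le c$ appearing against $\lambda_i$ and $\kappa_i$ terms.

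The outcome should be $\mathcal{L}V_r\le -C H^{n-1}\min(|p|,|q|^{\lambda+1},|z|^2)+C$. Since $K\sim c|p|$, the worst direction is $p$, where $H^{n-1}|p|\gtrsim H^{n-1}$. This yields $\mathcal{L}V_r\le -C V_r^{(n-1)/n}+C'$, i.e. $\theta=1-1/n$ and decay rate $t^{-(n-1)}$. We take $n=r+1$ to obtain \eqref{eq: ergodicity estimate}.

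In the hardest step we would first verify that the cross terms preserve the positivity $V_r\ge1$ (with $\varepsilon$ small). We then check carefully that the term $H^{n-2}\Gamma(H,\cdot)$ generated by the diffusion is lower order. The expected sticking point is the interplay of $c$ with $\kappa_i$ in the $\gamma=0$ case.
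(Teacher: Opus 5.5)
\textbf{Gap in Step 3.} Your Steps 1 and 2 are fine and follow the paper's route: invariance via the splitting $J\nabla H-D\nabla H$, the brackets $[X_0,X_{z_{ki}}]$ and $[X_0,\partial_{p_i}]$ together with nondegeneracy of $\nabla^2K$, and a control argument. The gap is in Step 3.

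Your ansatz $V_r=H^n+\varepsilon H^{n-1}(\langle p,q\rangle+\sum_i\kappa_i\langle z_i,p\rangle)$ is the classical hypocoercive one. It relies on $|p||q|$ and $|p||z|$ being $O(H)$, which is true for $K=|p|^2/2$ but false here, since $K(p)\asymp c|p|$ grows only linearly. Concretely:
\begin{enumerate}
\item \emph{$V_r$ is unbounded below.} Take $q$ bounded, $|z_i|=R$, $|p|=R^2$ and $\langle z_i,p\rangle<0$. Then $H\asymp R^2$ but $H^{n-1}\langle z_i,p\rangle\asymp -R^{2n+1}$, so $V_r\to-\infty$ for every $\varepsilon>0$. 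The same happens for $\langle p,q\rangle$ with $|q|=R$, $|p|=R^{\lambda+1}$.
\item \emph{The drift inequality fails in the same regime.} $\mathcal{L}\langle z_i,p\rangle$ contains $-\alpha_i\langle z_i,p\rangle$, and $\langle z_i,p\rangle\,\mathcal{L}H^{n-1}$ has the same sign. Together they give positive terms of size $\varepsilon H^{n-1}|z_i||p|\asymp\varepsilon R^{2n+1}$. The dissipation you have, $-nH^{n-1}\alpha_i|z_i|^2-\varepsilon\kappa_i\lambda_iH^{n-1}\langle\nabla K(p),p\rangle$, is only $O(R^{2n})$.
\item \emph{You cannot drop $\langle z_i,p\rangle$ when $\gamma>0$.} Along the pure $p$-direction the weighted cross term contributes $+\varepsilon H^{n-1}\langle\nabla K(p),p\rangle\approx\varepsilon H^{n}$. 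Friction in $\mathcal{L}H^n$ only gives $-n\gamma|\nabla K|^2H^{n-1}\ge-n\gamma c^2H^{n-1}$, which is too weak.
\item \emph{Case (i) needs the Lipschitz bound in your scheme.} Once $\langle z_i,p\rangle$ is present, the error $\varepsilon\kappa_iH^{n-1}\langle z_i,\nabla U(q)\rangle$ cannot be absorbed when $|\nabla U|\sim|q|^\lambda$ with $\lambda>1$ (take $|z|=R^a$, $|q|=R$, $1<a<\lambda$). Friction acts only in $p$, so your claim that case (i) is ``straightforward'' without Lipschitz $\nabla U$ has no support.
\end{enumerate}
Separately, a bound of the form $-CH^{n-1}\min(|p|,|q|^{\lambda+1},|z|^2)$ is not a Lyapunov bound, because the min vanishes on unbounded sets.

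\textbf{The missing idea.} Put the cross terms at a lower power of $H$ than the leading term. With $\epsilon=c^{-2}$ and $H_1=\epsilon H$, the paper uses
\[
V_1=H_1^2+\epsilon\langle q,p\rangle+\kappa \quad (\gamma>0),\qquad V_2=H_1^2+\epsilon_1\sum_i\langle p,z_i\rangle+\epsilon_2\langle q,p\rangle+\kappa \quad (\gamma=0).
\]
Now the cross terms are $O(H^{3/2})$, hence dominated by $H^2$. Moreover, $\mathcal{L}H_1^2=2H_1\mathcal{L}H_1+\dots$ multiplies the bounded friction and the $z$-dissipation by $H_1\gtrsim\sqrt{\epsilon}|p|$. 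This produces:
\begin{enumerate}
\item a $p$-dissipation of order $\sqrt{1+\epsilon|p|^2}$ when $\gamma>0$. It is of the same order as $\epsilon\langle\nabla K(p),p\rangle\le\sqrt{\epsilon}|p|$ and absorbs it, with a small constant on the cross term if needed;
\item the term $-\epsilon^{3/2}\alpha_i|p||z_i|^2$. Combined with the $-\Lambda_1\lambda_i\sqrt{\epsilon}|p|$ gain, it absorbs $\epsilon_1\alpha_i|z_i||p|$ provided $\Lambda_1\le\lambda_i/\alpha_i$.
\end{enumerate}
For $\gamma>0$ no $\langle p,z_i\rangle$ term is needed, which is why no Lipschitz condition appears. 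For $\gamma=0$, the term $\epsilon_1\langle\nabla U(q),z_i\rangle$ is absorbed using $|\nabla U(q)|\lesssim1+|q|$.

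This yields $\mathcal{L}V\le-\zeta V^{1/2}+C$. Arbitrary polynomial rates then come from powers, $\mathcal{L}V^n\le-\zeta V^{n-1/2}+C$. There the It\^o correction contains a term of order $\epsilon\, n(n-1)V^{n-1/2}$, which is why $c$ must be taken large depending on $r$.
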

To prove this theorem, we adopt the framework of \cite{hairer2009hot} (see \cite{ bakry2008rate, douc2009subgeometric, fort2005subgeometric} for earlier development), which consists of three main ingredients, namely, a H\"ormander's condition, a solvability of the associated control problem, and a suitable Lyapunov function quantifying the convergent rate. We will deal with these issues in Section \ref{sec: ergodicity}, and recall the framework of \cite{hairer2009hot} in more detail in Appendix \ref{Ergodicity Results}. 
\subsection{White-noise limit}
Now, we discuss the white-noise limit of the system \eqref{GRLEpqz}. Following the framework of \cite{nguyen2018small,ottobre_asymptotic_2011}, we introduce a diffusive rescaling to the memory kernel as follows
\begin{equation}
    \label{rescaledmemorykermel}
\eta^{(\varepsilon)}(t)=\sum_{i=1}^M \frac{\lambda_i^2}{\varepsilon} e^{-\frac{\alpha_i}{\varepsilon} t}\mathbf{I}_d,\quad \varepsilon\in(0,1).
\end{equation}
This amounts to rescaling $\lambda_i\mapsto \frac{\lambda_i}{\sqrt{\varepsilon}}$ and $\alpha_i\mapsto \frac{\alpha_i}{\varepsilon}$ in \eqref{GRLEpqz}.
In order to be consistent with \eqref{fluctuationdissipationtheorem}, the rescaled noise is given by
\begin{equation*}
\label{rescalednoise}
    F^{(\varepsilon)}(t):=\frac{1}{\sqrt{\varepsilon}}F\left(\frac{t}{\varepsilon}\right),
\end{equation*}
By applying this memory kernel to the Markovian formulation, \eqref{GRLEpqz} becomes
\begin{subequations}
\label{GRLEpqzrescaled}
 \begin{align}
\label{qrescaled} \df q^{(\varepsilon)}(t)&=\nabla K(p^{(\varepsilon)}(t))\,\df t,\\
 \notag \df p^{(\varepsilon)}(t)&=-\nabla U(q^{(\varepsilon)}(t))\,\df t-\gamma\nabla K(p^{(\varepsilon)}(t))\,\df t+\frac{1}{\sqrt{\varepsilon}}\sum_{i=1}^M \lambda_i z_i^{(\varepsilon)}(t)\,\df t\\ \label{prescaled}&\quad +\sqrt{2\gamma\beta^{-1}}\, \df W(t),\\
\label{zrescaled} \df z_i^{(\varepsilon)}(t)&=-\frac{\lambda_i}{\sqrt{\varepsilon}}\nabla K(p^{(\varepsilon)}(t))\,\df t-\frac{\alpha_i}{\varepsilon} z_i^{(\varepsilon)}(t)\,\df t+\sqrt{\frac{2\alpha_i}{{\varepsilon}}}\,\df W_i(t),\\
q^{(\varepsilon)}(0)&= q_0,\qquad p^{(\varepsilon)}(0)=p_0,\qquad z_i^{(\varepsilon)}(0)=z_{i,0},\notag 
\end{align}
\end{subequations}
with initial conditions independent of our choice of $\varepsilon$. Such a rescaling is commonly used, not only in the study of white noise limits \cite{ottobre_asymptotic_2011,nguyen2018small}, but also in the asymptotic analysis of stochastic partial differential equations, e.g., the stochastic Burgers' equation \cite{cannizzaro_gaussian_2024}. Note that we have used the superscripts in \eqref{GRLEpqzrescaled} to indicate that the dynamics depends on the parameters $\varepsilon$ which will be important in the study of the white-noise in the next theorem.

As usual, we require the following assumption on the initial conditions:
\begin{assumption}
\label{boundedinitialconditionsrel}
    The initial conditions $(q_0,p_0,z_0)\in \mathbb{R}^{(2+M)d}$ have finite moments: for all $n\in\IN$,
    \begin{equation*}
\IE\Big[\left|p_0\right|^n+\left|q_0\right|^n+\sum_{i=1}^M\left|z_{i,0}\right|^n\Big]<\infty.
\end{equation*}
\end{assumption}
We now state the third main result of the paper giving the white noise limit for the GRLE.
\begin{theorem}[The Relativistic White Noise Limit]
    \label{relativisticwhitenoiselimit}
Under Assumption \ref{UAssumptions}, let $\left(q^{(\varepsilon)},p^{(\varepsilon)},z^{(\varepsilon)}\right)$ be the solution to \eqref{GRLEpqzrescaled} with initial conditions $(q_0, p_0, z_0)$ satisfying Assumption \ref{boundedinitialconditionsrel}. For all $T>0$ and $n\in \IN$, it holds that
\begin{align}
\IE\Big[\sup_{t\in[0,T]}|q^{(\varepsilon)}(t)-Q(t)|^n+ \sup_{t\in[0,T]}|p^{(\varepsilon)}(t)-P(t)|^n\Big]\to 0
\end{align}
as $\varepsilon\to 0$, where $(Q,P)$ satisfy the following underdamped relativistic Langevin dynamics
\begin{subequations}
\label{PQprocess}
 \begin{align}
\df Q(t)&=\nabla K(P(t))\,\df t,\\
\label{bigP} \df P(t)&=-\nabla U(Q(t))\,\df t-\Big( \gamma+\sum_{i=1}^M\frac{\lambda^2_i}{\alpha_i} \Big) \nabla K(P(t))\,\df t+\sqrt{2\beta^{-1}   \gamma} \, d W(t)+ \sum_{i=1}^M\sqrt{  \frac{2\beta^{-1} \lambda_i^2}{\alpha_i}} \, \df W_i(t),\\
Q(0)&=q_0,\quad P(0)=p_0.
\end{align}   
\end{subequations}
\end{theorem}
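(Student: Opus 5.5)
The plan is the standard strategy of \cite{ottobre_asymptotic_2011,nguyen2018small}: integrate out the fast variables $z_i^{(\varepsilon)}$ explicitly, substitute into the $p$-equation, and identify the limiting friction and noise. Throughout, we assume the limit \eqref{PQprocess} is driven by the same Brownian motions $W,W_i$ as \eqref{GRLEpqzrescaled}, so the claimed convergence is pathwise in $L^n$.

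\textbf{Step 1: rewrite the drift.} Applying It\^o's formula to \eqref{zrescaled} after multiplying by $\sqrt{\varepsilon}\lambda_i/\alpha_i$ gives
\begin{align*}
\frac{\lambda_i}{\sqrt{\varepsilon}}z_i^{(\varepsilon)}\,\df t
&=-\frac{\lambda_i\sqrt{\varepsilon}}{\alpha_i}\,\df z_i^{(\varepsilon)}-\frac{\lambda_i^2}{\alpha_i}\nabla K(p^{(\varepsilon)})\,\df t+\frac{\lambda_i}{\alpha_i}\sqrt{2\alpha_i\beta^{-1}}\,\df W_i .
\end{align*}
Summing over $i$ and inserting into \eqref{prescaled} yields
\begin{align*}
p^{(\varepsilon)}(t)&=p_0-\int_0^t\nabla U(q^{(\varepsilon)})\,\df s-\Big(\gamma+\sum_{i=1}^M\frac{\lambda_i^2}{\alpha_i}\Big)\int_0^t\nabla K(p^{(\varepsilon)})\,\df s+\sqrt{2\gamma\beta^{-1}}W(t)\\
&\quad+\sum_{i=1}^M\sqrt{\frac{2\beta^{-1}\lambda_i^2}{\alpha_i}}W_i(t)-\sum_{i=1}^M\frac{\lambda_i\sqrt{\varepsilon}}{\alpha_i}\big(z_i^{(\varepsilon)}(t)-z_{i,0}\big).
\end{align*}
So $(q^{(\varepsilon)},p^{(\varepsilon)})$ solves \eqref{PQprocess} up to the remainder $R^{(\varepsilon)}(t)=\sqrt{\varepsilon}\sum_i\frac{\lambda_i}{\alpha_i}(z_i^{(\varepsilon)}(t)-z_{i,0})$.

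\textbf{Step 2: the remainder vanishes.} We will show $\IE\sup_{t\le T}|\sqrt{\varepsilon}z_i^{(\varepsilon)}(t)|^n\to0$. By variation of constants,
\[
z_i^{(\varepsilon)}(t)=e^{-\alpha_it/\varepsilon}z_{i,0}-\frac{\lambda_i}{\sqrt\varepsilon}\int_0^te^{-\alpha_i(t-s)/\varepsilon}\nabla K(p^{(\varepsilon)})\,\df s+\sqrt{\frac{2\alpha_i}{\varepsilon}}\int_0^te^{-\alpha_i(t-s)/\varepsilon}\df W_i .
\]
The key relativistic simplification is that $|\nabla K(p)|=c|p|/\sqrt{m^2c^2+|p|^2}\le c$ is bounded. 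Hence the drift term, multiplied by $\sqrt\varepsilon$, is bounded by $\lambda_i c\,\varepsilon/\alpha_i$ deterministically, with no a priori moment bounds on $p^{(\varepsilon)}$ needed. The initial term is handled by Assumption~\ref{boundedinitialconditionsrel}. The stochastic term is $\sqrt{2\alpha_i}$ times an Ornstein--Uhlenbeck process with rate $\alpha_i/\varepsilon$ and unit-order variance. The standard supremum estimate for OU processes (e.g.\ via the factorization method, or via a union bound over a grid of mesh $\varepsilon$ together with Gaussian tails) gives $\IE\sup_{t\le T}|\cdot|^n\lesssim(\log(T/\varepsilon))^{n/2}$. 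Multiplying by $\varepsilon^{n/2}$ then sends it to zero. This supremum bound on the OU process is the one genuinely technical step, and I expect it to be the main obstacle, though the argument is standard.

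\textbf{Step 3: Gr\"onwall.} Subtract the integral form of \eqref{PQprocess}. Since the noises coincide, they cancel, leaving
\[
|q^{(\varepsilon)}-Q|+|p^{(\varepsilon)}-P|\le\int_0^t\big(|\nabla K(p^{(\varepsilon)})-\nabla K(P)|+|\nabla U(q^{(\varepsilon)})-\nabla U(Q)|\big)\,\df s+|R^{(\varepsilon)}(t)|.
\]
$\nabla K$ is globally Lipschitz, but $\nabla U$ is only locally Lipschitz. I will therefore localize with the stopping time $\tau_R=\inf\{t:|q^{(\varepsilon)}|+|Q|\ge R\}$. On $[0,\tau_R\wedge T]$, Gr\"onwall gives $\sup|q^{(\varepsilon)}-Q|^n+\sup|p^{(\varepsilon)}-P|^n\le C_R\sup|R^{(\varepsilon)}|^n$, which tends to zero in expectation by Step 2.

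To remove the localization, I need uniform-in-$\varepsilon$ moment bounds on $\sup_{t\le T}(|q^{(\varepsilon)}|+|p^{(\varepsilon)}|)^{2n}$ and on the same quantity for $(Q,P)$. These follow from It\^o's formula applied to the Lyapunov function $H$ from \eqref{Hamiltonian}, whose $\varepsilon$-terms cancel between the $p$ and $z$ equations. Assumption~\ref{UAssumptions} then controls $|q|$ through $U$, and BDG handles the supremum. Finally,
\[
\IP(\tau_R<T)\le R^{-1}\,\IE\sup_{t\le T}\big(|q^{(\varepsilon)}|+|Q|\big)\le C/R ,
\]
so combining with Cauchy--Schwarz on the event $\{\tau_R<T\}$ and letting first $\varepsilon\to0$, then $R\to\infty$, concludes the proof.
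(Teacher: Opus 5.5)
Your Steps 1--3 follow the paper's route: the same rewriting of $\varepsilon^{-1/2}\lambda_i z_i^{(\varepsilon)}\,\df t$ via the $z$-equation as in Proposition~\ref{relwhitenoiselimitlipschitz}, then Gr\"onwall, then stopping-time localisation. Your Step 2 is in fact sharper than the paper. Proposition~\ref{boundedprocessesgen} asserts that $\IE\sup_{t\le T}|z_i^{(\varepsilon)}(t)|^n$ is bounded uniformly in $\varepsilon$. That is false: $z_i^{(\varepsilon)}$ is within $O(\sqrt{\varepsilon})$ of an Ornstein--Uhlenbeck process with rate $\alpha_i/\varepsilon$ and unit-order variance, whose supremum grows like $\sqrt{\log(1/\varepsilon)}$. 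So the Lipschitz-case bound is really $C\varepsilon^{n/2}(\log(1/\varepsilon))^{n/2}$, which still suffices.

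The genuine gap is how you get the uniform-in-$\varepsilon$ moment bounds needed to remove the localisation. It\^o's formula for $H=U(q)+K(p)+\frac12|z|^2$ along \eqref{GRLEpqzrescaled} gives
\begin{align*}
\df H={}&\Big(-\gamma|\nabla K(p)|^2+\gamma\beta^{-1}\Delta K(p)+\frac{1}{\varepsilon}\sum_{i=1}^M\alpha_i\big(d\beta^{-1}-|z_i|^2\big)\Big)\,\df t\\
&+\sqrt{2\gamma\beta^{-1}}\,\nabla K(p)\cdot\df W+\sum_{i=1}^M\sqrt{\frac{2\alpha_i\beta^{-1}}{\varepsilon}}\,z_i\cdot\df W_i .
\end{align*}
Only the $\varepsilon^{-1/2}$ cross terms cancel. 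An $O(\varepsilon^{-1})$ drift and a martingale with quadratic variation of order $T/\varepsilon$ remain, so BDG gives bounds that blow up. This route cannot be rescued: $\sup_{t\le T}H(t)\ge\frac12\sup_{t\le T}|z^{(\varepsilon)}(t)|^2$, which by your own Step 2 is of order $\log(1/\varepsilon)$. Dropping the $z$-part of $H$ just brings back the singular term $\varepsilon^{-1/2}\nabla K(p)\cdot\sum_i\lambda_i z_i$ in $\df\,(U(q)+K(p))$.

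The fix is to reuse $|\nabla K|\le c$, which makes energy estimates unnecessary. Pathwise, $|q^{(\varepsilon)}(t)|,|Q(t)|\le|q_0|+cT$ for $t\le T$. Hence $\{2|q_0|+2cT<R\}\subseteq\{\tau_R>T\}$, and on that event your Gr\"onwall bound applies. On the complement, subtract the integral equations (the noises cancel) and use your Step 1 identity. With $\gamma^*=\gamma+\sum_i\lambda_i^2/\alpha_i$ and $G(q_0)=\max_{|x|\le|q_0|+cT}|\nabla U(x)|$, this gives
\[
\sup_{t\le T}\big(|q^{(\varepsilon)}(t)-Q(t)|+|p^{(\varepsilon)}(t)-P(t)|\big)\le 2cT(1+\gamma^*)+2T\,G(q_0)+\sup_{t\le T}|R^{(\varepsilon)}(t)| .
\]
Therefore the $n$-th moment of the left side is at most $C_R\,\IE\sup_{t\le T}|R^{(\varepsilon)}(t)|^n+C\,\IE\big[(1+G(q_0))^n\mathbbm{1}\{2|q_0|+2cT\ge R\}\big]$. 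Let $\varepsilon\to0$ and then $R\to\infty$. This needs $G(q_0)\in L^n$, which holds for instance for deterministic $q_0$ or for polynomially growing $\nabla U$.
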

The relativistic Langevin dynamics \eqref{PQprocess} was introduce in \cite{debbasch_relativistic_1997} and has been studied intensively in the literature, see for instance the recent paper \cite{duong_trend_2024} and references therein. The proof of Theorem \ref{relativisticwhitenoiselimit} relies on the fact that $z_i^{(\varepsilon)}$ is an Ornstein-Uhlenbeck process, which can be solved explicitly. Then by substituting it to \eqref{prescaled}, we obtain a closed system for the evolution of position and momentum. This will be rigorously established in Section \ref{White Noise and Newtonian Limits}. 
\begin{remark}
As a consequence of Theorem \ref{relativisticwhitenoiselimit}, as $\varepsilon\to 0$, the process $(q^{(\varepsilon)},p^{(\varepsilon)})$ converges in distribution to the solution of the following more familiar-looking RLE \cite{debbasch_relativistic_1997}
\begin{subequations}
\label{RLE}
\begin{align}
\df Q(t) &= \nabla K(P(t))\,\df t,\\
\df P(t) & = -\nabla U(Q(t))\, \df t -\gamma^{*}  \nabla K(P(t)) \, \df t+\sqrt{2\gamma^{*}\beta^{-1}} \, \df W(t),\\
Q(0)&=q_0,\qquad P(0)=p_0,\notag 
\end{align}
\end{subequations}
where the friction coefficient $\gamma^*$ is given by
\[\gamma^{*}=\gamma+\sum_{i=1}^M\frac{\lambda_i^{2}}{\alpha_i}.\]
Note that, comparing to \eqref{PQprocess}, the above system consists of only one Wiener process and one effective friction coefficient $\gamma^*$.
\end{remark}
\subsection{Newtonian limit} 
We turn our attention to the Newtonian limit of system\eqref{GRLEpqz} when the speed of light $c$ tends to infinity. Our main finding is that, in this regime, one may recover the Markovian formulation of the non-relativistic generalised Langevin dynamics from \eqref{GRLEpqz}. This is summarized below through Theorem \ref{relativisticnewtonianlimit}.
\begin{theorem}[The Newtonian Limit for the GRLE]
\label{relativisticnewtonianlimit}
Under Assumption \ref{UAssumptions}, let $(q^{(c)},p^{(c)},z^{(c)})$ be the solution to \eqref{GRLEpqz} with initial conditions $(q_0,p_0,z_0)$ satisfying Assumption \ref{boundedinitialconditionsrel}. Then, for all $T>0$ and $n>0$, it holds that
\begin{align} \label{lim:newtonian}
    \IE\Big[\sup_{t\in[0,T]}\left|q^{(c)}(t)-q(t)\right|^n+\sup_{t\in[0,T]}\left|p^{(c)}(t)-p(t)\right|^n+\sum_{i=1}^M\sup_{t\in[0,T]}\left|z_i^{(c)}(t)-z_i(t)\right|^n\Big]\to 0,
\end{align}
as $c\to\infty$, where the process $(q,p,z)$ satisfy the classical (non-relativistic) generalized Langevin equation \cite{ottobre_asymptotic_2011}
\begin{subequations}
\label{GLEpqz}
 \begin{align}
 \label{qclasmarkov}
 \df q(t)&=\frac{p(t)}{m}\,\df t,\\
 \label{pclasmarkov}
\df p(t)&=-\nabla U(q(t))\,\df t-\gamma\frac{p(t)}{m}\,\df t+\sum_{i=1}^M \lambda_i z_i(t)\,\df t+\sqrt{2\gamma\beta^{-1}}\, \df W(t),\\
\label{zclasmarkov}
\df z_i(t)&=-\lambda_i\frac{p(t)}{m}\,\df t-\alpha_i z_i(t)\,\df t+\sqrt{2\alpha_i\beta^{-1}}\,\df W_i(t),\\
q(0)&=q_0,\qquad p(0)=p_0,\qquad z_i(0)=z_{i,0}. \notag 
\end{align}   
\end{subequations}
\end{theorem}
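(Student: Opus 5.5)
Fix $T>0$ and couple the two systems by driving \eqref{GRLEpqz} (with speed of light $c$) and \eqref{GLEpqz} with the same Brownian motions $W,W_1,\dots,W_M$ and the same initial data. Set $\delta q=q^{(c)}-q$, $\delta p=p^{(c)}-p$, $\delta z_i=z_i^{(c)}-z_i$. The noise cancels in the differences, so $(\delta q,\delta p,\delta z)$ solves a random ODE:
\begin{align*}
\tfrac{\df}{\df t}\delta q&=\nabla K_c(p^{(c)})-\tfrac{p}{m},\\
\tfrac{\df}{\df t}\delta p&=-\big(\nabla U(q^{(c)})-\nabla U(q)\big)-\gamma\Big(\nabla K_c(p^{(c)})-\tfrac{p}{m}\Big)+\sum_i\lambda_i\delta z_i,\\
\tfrac{\df}{\df t}\delta z_i&=-\lambda_i\Big(\nabla K_c(p^{(c)})-\tfrac{p}{m}\Big)-\alpha_i\delta z_i .
\end{align*}
Here $\nabla K_c(p)=p/\sqrt{m^2+|p|^2/c^2}$. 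I split
\[
\nabla K_c(p^{(c)})-\tfrac pm=\big(\nabla K_c(p^{(c)})-\nabla K_c(p)\big)+\big(\nabla K_c(p)-\tfrac pm\big).
\]
The first term is bounded by $L|\delta p|$. The Hessian of $K_c$ is bounded by $1/m$ uniformly in $c$, so $L=1/m$. For the second term, an elementary expansion gives
\[
\Big|\nabla K_c(p)-\tfrac pm\Big|\le \frac{|p|^3}{m^3c^2}\wedge \frac{2|p|}{m},
\]
so this term is a remainder of order $c^{-2}|p|^3$, involving only the \emph{limiting} process $p$.

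The main obstacle is that $\nabla U$ is only locally Lipschitz under Assumption \ref{UAssumptions}, so Gronwall cannot be applied globally. I will localize. Let
\[
\tau_R=\inf\{t:|q^{(c)}(t)|+|q(t)|+|p(t)|\ge R\}.
\]
On $[0,\tau_R\wedge T]$ we have $|\nabla U(q^{(c)})-\nabla U(q)|\le L_R|\delta q|$. Gronwall then gives, pathwise,
\[
\sup_{t\le\tau_R\wedge T}\big(|\delta q|+|\delta p|+|\delta z|\big)\le C\,T\,e^{C_R T}\,\frac{R^3}{c^2},
\]
since the initial differences vanish. In particular, this quantity tends to $0$ as $c\to\infty$ for fixed $R$.

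To remove the localization I need $c$-uniform moment bounds. The Hamiltonian $H_c=U+K_c+\tfrac12|z|^2$ satisfies $\CL H_c=-\gamma|\nabla K_c|^2-\sum\alpha_i|z_i|^2+\beta^{-1}\big(\gamma\,\Delta K_c+\sum_i\alpha_i d\big)$, and $\Delta K_c\le d/m$ uniformly in $c$. The cross terms $\lambda_i z_i\cdot\nabla K_c$ cancel exactly. So $\IE\sup_{t\le T}H_c^n$ is bounded uniformly in $c$, via Itô, BDG and Assumption \ref{boundedinitialconditionsrel}; the kinetic energy $K_c-mc^2$ should be used so that constants do not blow up. This yields uniform moments of $|q^{(c)}|^{\lambda+1}$ and of $|z^{(c)}|$. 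The same holds for $q,p,z$ of the classical GLE, whose moments are standard, which controls $\IP(\tau_R<T)\le C_n/R$. Then
\begin{multline*}
\IE\big[\sup_{t\le T}|\delta|^n\big]\le \IE\big[\sup_{t\le\tau_R\wedge T}|\delta|^n\big]\\
+\IE\big[\sup_{t\le T}|\delta|^{2n}\big]^{1/2}\,\IP(\tau_R<T)^{1/2}.
\end{multline*}
For the first term I take expectations of the Gronwall bound. Note the localization only involves $q^{(c)}$ and the classical process, not $p^{(c)}$, because $\nabla K_c$ is globally Lipschitz. The second factor needs a uniform moment of $\sup|\delta p|$. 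I obtain it from the $p^{(c)}$ equation directly: $|\nabla K_c|\le c$ is useless here, but $|\nabla U(q^{(c)})|$ has polynomial moments by the growth assumption if $\nabla U$ has polynomial growth. If that is not available, I bound $p^{(c)}$ via $\sup_t K_c(p^{(c)})-mc^2\gtrsim\min(|p|^2/m,|p|c)$, which gives uniform moments of $|p^{(c)}|$ for $c\ge1$.

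Finally, I let $c\to\infty$ and then $R\to\infty$. This yields \eqref{lim:newtonian} for every $n$, and for $n<1$ it follows by Jensen.
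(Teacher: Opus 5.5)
Your argument is correct, and its skeleton (Gronwall on a localized problem, $c$-uniform moment bounds, Cauchy--Schwarz and Markov on $\{\tau_R<T\}$) is the one the paper intends, but the ingredients differ.

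The paper proceeds in two steps. It first proves an $L^n$ bound of order $\epsilon^{n/2}=c^{-n}$ for globally Lipschitz $\nabla U$ (Proposition \ref{newtonianlimitlipschitz}). There it splits through $p^{(c)}$, so the remainder $\sqrt{\epsilon}\,|p^{(c)}|^2$ involves the relativistic momentum and is controlled by Proposition \ref{boundedprocessesgen}. It then removes the Lipschitz hypothesis via a cut-off system, as in the proof of Theorem \ref{relativisticwhitenoiselimit}.

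You split through $\nabla K_c(p)$ instead, so the remainder $O(c^{-2}|p|^3)$ involves only the classical momentum. By including $|p|$ in $\tau_R$, you make both the local Lipschitz constant and the remainder deterministic on $[0,\tau_R\wedge T]$. This dispenses with the cut-off system, and with any need for moment bounds for a modified, non-confining potential. The uniform moments of $p^{(c)}$ then enter only through the tail term. Under a global Lipschitz hypothesis, the same split run without localization would even give $c^{-2n}$ instead of $c^{-n}$.

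Your moment bounds via the shifted Hamiltonian $U+K_c-mc^2+\tfrac12|z|^2$ are also cleaner than the paper's $\Gamma$-functionals. The one step you should write out is $|\nabla K_c(p)|^2\le C\,(K_c(p)-mc^2)$ with $C$ independent of $c$; check the regimes $|p|\le mc$ and $|p|\ge mc$ separately. This is what controls the quadratic variation in the BDG step. Your comparison $K_c(p)-mc^2\gtrsim\min(|p|^2/m,\,c|p|)$ is indeed the right way to get moments of $\sup_t|p^{(c)}|$, since Assumption \ref{UAssumptions} gives no growth bound on $\nabla U$.

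In the general (non-Lipschitz) case neither route produces a rate.
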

The proof of this theorem is based on estimating the errors between the two systems \eqref{GRLEpqz} and \eqref{GLEpqz} while making use of Gronwall's lemma and suitable moment bounds. All of this will be addressed in detail in Section \ref{White Noise and Newtonian Limits}.

\subsection{Future works} In this paper, we focus on the most popular type of memory kernel, namely the Prony series memory kernel as a finite sum of exponentials. For this class, the non-Markovian GRLE can be equivalently written as a Markovian one by augmenting the original process by a finite number of auxiliary variables $z$. Other class of memory kernels have also been considered in the literature, notably the power-law ones \cite{glatt2020generalized,nguyen2018small}. In this latter case, one needs to use an infinite number of auxiliary variables and study infinite-dimensional stochastic differential equations in Hilbert spaces. It would be an interesting problem to extend the results of the present paper to such settings. Another challenging direction for future research is to investigate systems of interacting particles, in particular when the interaction potentials are singular such as the Coloumb and Lennard-Jones potentials. These interacting particle systems play a central role in statistical physics and have received considerable attention in recent years \cite{duong2024asymptotic,duong_trend_2024,duong2026ergodicity,jabin2017mean,serfaty2018systems}.  
\subsection{Organization of the paper}
The rest of the paper is structured as follows. In Section \ref{Markovian Approximation:Proof}, we derive the Markovian formulation of the GRLE and establish the well-posedness of the resulting Markov system \eqref{markovapproximation}. Section \ref{sec: ergodicity} studies the ergodicity of the Markovian system when the memory kernel $\eta$ admits the form as a finite sum of exponentials \eqref{exponentialmemorykernel}. Particularly, in Section \ref{Lyapunov Functions for the Generalised Relativistic Langevin Equation}, we construct Lyapunov functions for the GRLE whereas in Section \ref{Polynomial Ergodicity}, we complete our proof of polynomial ergodicity by showing that Hörmander's condition holds and the control problem associated with the Markovian formulation is solvable. In Section \ref{White Noise and Newtonian Limits}, we prove the white noise and Newtonian limits of the Markovian formulation; first when $\nabla U$ is Lipschitz, and then extending to the class of potentials satisfying Assumption \ref{UAssumptions}. The paper concludes with two appendices. In Appendix \ref{Ergodicity Results}, we recall the framework developed in \cite{hairer2009hot} that we use in order to prove the polynomial ergodicity of the GRLE, while in Appendix \ref{appendix2}, we present a technical lemma that is used in the proofs of the main results.

\section{Well-posdeness of the Markovian GRLE}
\label{Markovian Approximation:Proof}
In this section, we consider the Markovian system \eqref{markovapproximation} and establish Proposition \ref{prop: Markovian formulation} showing that \eqref{markovapproximation} is indeed equivalent to the non-Markovian GRLE \eqref{GRLE} when the memory kernel is given by the exponential form \eqref{markappmemorykernel}. We also present the proof of Theorem \ref{existenceuniquenessmarkovtheorem} deducing the well-posedness of \eqref{markovapproximation}.

We start with the proof of Proposition \ref{prop: Markovian formulation}, whose argument mainly relies on Duhamel's formulas.

\begin{proof}[Proof of Proposition \ref{prop: Markovian formulation}]
        By the Duhamel's formula, we note that the process $z$ in \eqref{markovapproximation} can be recast as
    \begin{equation}
    \label{markappzito}
       z(t)=e^{-{\bf{A}}t}z(0)+\int_0^te^{-A(t-s)}\Sigma\, \df W(s) -\int_0^te^{-{\bf{A}}(t-s)}\Lambda^T\nabla K(p(s))\,\df s.
    \end{equation}
Substituting (\ref{markappzito}) into (\ref{markappp}), we get
\begin{align*}
   \df p(t)&=-\nabla U(q(t))\,\df t-\gamma\nabla K(p(t))\,\df t-\int_0^t \eta(t-s)\nabla K(p(t))\,\df s\,\df t\\
   &\qquad+\sqrt{2\gamma\beta^{-1}}\, \df W(t)+ F(t)\,\df t,
\end{align*}
where $\eta$ is defined as in (\ref{markappmemorykernel}), and $(F(t))_{t\geq0}$ is an Ornstein-Uhlenbeck process given by
\[
F(t)=\Lambda e^{-{\bf{A}}t}z(0)+\Lambda\int_0^te^{-{\bf{A}}(t-s)}\Sigma \,\df W(s).
\]
By using a routine calculation similar to the approach found in \cite[Proposition 8.1]{pavliotis_stochastic_2014}, $F$ satisfies \eqref{fluctuationdissipationtheorem}. This completes the proof.
\end{proof}

Next, we turn to the well-posedness of \eqref{markovapproximation} and establish Theorem \ref{existenceuniquenessmarkovtheorem}. In order to do this, we need the following lemma showing that $\nabla K$ is globally Lipschitz continuous.
\begin{lemma}
\label{relativistickineticenergylipschitzcontinuous}
    The function $K$ defined in \eqref{relativistic KE} satisfies that $\nabla  K$ is Lipschitz continuous and that $\nabla^2  K$ is bounded. In particular, for all $p_1,p_2\in\IR^d$,
    \begin{align*}
        |\nabla K(p_1)-\nabla K(p_2)|\leq\frac{d}{m}|p_1-p_2|,
    \end{align*}
\end{lemma}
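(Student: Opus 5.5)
We compute the derivatives of $K(p)=c\sqrt{m^2c^2+|p|^2}$ explicitly. First,
\[
\nabla K(p)=\frac{c\,p}{\sqrt{m^2c^2+|p|^2}},
\]
and differentiating once more gives, for $i,j=1,\dots,d$,
\[
\partial_{ij}K(p)=\frac{c\,\delta_{ij}}{\sqrt{m^2c^2+|p|^2}}-\frac{c\,p_ip_j}{(m^2c^2+|p|^2)^{3/2}}.
\]
Write $s=\sqrt{m^2c^2+|p|^2}\ge mc$. Then
\[
|\partial_{ij}K(p)|\le \frac{c}{s}\Bigl(\delta_{ij}+\frac{|p_i||p_j|}{s^2}\Bigr).
\]
Since $|p_i||p_j|\le|p|^2\le s^2$ and $c/s\le 1/m$, every entry of $\nabla^2K$ is bounded by $1/m$ in absolute value. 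Tracking the structure a bit more carefully will give the constant exactly.

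Actually a sharper route is available. The Hessian is $\frac{c}{s}\bigl(\mathbf{I}_d-\frac{pp^{T}}{s^2}\bigr)$. This is a symmetric matrix with eigenvalues $\frac{c}{s}$ (with multiplicity $d-1$) and $\frac{c}{s}\bigl(1-\frac{|p|^2}{s^2}\bigr)$, all lying in $[0,1/m]$. Hence the operator norm satisfies $\|\nabla^2K(p)\|\le 1/m$, which is at most $d/m$. If one prefers to match the paper's constant, one can instead bound the operator norm by the sum of the absolute values of the entries in a row, or by the Frobenius norm; either gives something at most $d/m$.

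With the Hessian bounded, the Lipschitz estimate follows from the mean value theorem in integral form:
\[
\nabla K(p_1)-\nabla K(p_2)=\int_0^1\nabla^2K\bigl(p_2+\theta(p_1-p_2)\bigr)(p_1-p_2)\,\df\theta.
\]
Therefore
\[
|\nabla K(p_1)-\nabla K(p_2)|\le \sup_p\|\nabla^2K(p)\|\,|p_1-p_2|\le \frac{d}{m}|p_1-p_2|.
\]

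There is no real obstacle in this argument. The only point requiring care is the bookkeeping of the constant: choosing a matrix-norm bound that yields at most $d/m$, which holds since the entrywise bound of $1/m$ gives $\|\nabla^2K\|\le d\cdot\frac1m$.
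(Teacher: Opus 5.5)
Your proof is correct, but your main route differs from the paper's. The paper bounds each Hessian entry by $1/m$ separately (diagonal and off-diagonal). It then concludes $\|\nabla^2K\|_{\mathrm{F}}\le d/m$ and leaves the step from a Hessian bound to the Lipschitz estimate implicit. You instead write $\nabla^2K(p)=\frac{c}{s}\bigl(\mathbf{I}_d-\frac{pp^{T}}{s^2}\bigr)$ with $s=\sqrt{m^2c^2+|p|^2}$. You read off the eigenvalues $\frac{c}{s}$ and $\frac{m^2c^3}{s^3}$, both in $(0,1/m]$, and obtain the dimension-free operator-norm bound $1/m$. This is sharper by a factor $d$ and shows the paper's constant is not optimal; as a by-product it gives strict convexity of $K$. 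You also state the integral mean value step explicitly, which the paper omits. The paper's entrywise route is simpler but loses the factor $d$ in passing from entries to a matrix norm.

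One small slip in your first paragraph. The triangle-inequality bound $\frac{c}{s}\bigl(\delta_{ij}+\frac{|p_i||p_j|}{s^2}\bigr)$ only yields $2/m$ on the diagonal. To get the entrywise bound $1/m$ there, as the paper does, you must keep the sign: $\partial_{ii}K=\frac{c}{s}\bigl(1-\frac{p_i^2}{s^2}\bigr)\in[0,c/s]$. Your conclusion rests on the eigenvalue bound, so correctness is unaffected. However, your closing remark that ``the entrywise bound of $1/m$ gives $d/m$'' needs this corrected justification.
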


\begin{proof}
Observe that, for $1\leq i,j\leq M$ where $i\neq j$,
    \begin{align}
    \label{derivativep_i2}
\left|\frac{\delta^2}{\delta p_i^2} K(p)\right|=\frac{\left|c^3m^2+c|p|^2-cp_i^2\right|}{\left(|p|^2+c^2m^2 \right)^\frac{3}{2}}\leq \frac{1}{m},
\end{align}
and,
    \begin{align}
    \label{derivativep_ip_j}
\left|\frac{\delta^2}{\delta p_i\delta p_j} K(p)\right|=\frac{c|p_ip_j|}{\left(|p|^2+c^2m^2 \right)^\frac{3}{2}}\leq \frac{1}{m},
\end{align}
Together \eqref{derivativep_i2} and \eqref{derivativep_ip_j} imply that 
\begin{equation*}
    \left\|\nabla^2 K \right\|_\text{F}\leq \frac{d}{m},
\end{equation*} 
where $\nabla ^2K$ denotes the Hessian of $K$ and $\left\|\cdot\right\|_\text{F}$ denotes the Frobenius norm.

\end{proof}

To prove Theorem \ref{existenceuniquenessmarkovtheorem}, we will apply \cite[Theorem 3.5]{khasminskii_stochastic_2012} which states that there exists a unique Markov solution to a time homogeneous SDE,
\[
\df X(t)=a(X(t))\,\df t+\sigma(X(t))\, \df W(t),
\]
if, for all $R>0$, there exists $C_R>0$ such that, whenever $|x|,|y|<R$,

\begin{equation}
\label{defforlocallipschitzcont}
    |a(y)-a(x)|+\|\sigma(y)-\sigma(x)\|\leq C_R|y-x|;
\end{equation}
and, there exists $V\in C^2(\IR)$ such that $\lim_{|x|\to\infty}V(x)=\infty$, and

\begin{equation}
\label{igexistencecondition}
    \CL V\leq \zeta V,
\end{equation}
for some $\zeta>0$, where $\CL$ is the generator of the SDE. See also \cite{veretennikov2024lyapunov}.

\begin{proof}[Proof of Theorem \ref{existenceuniquenessmarkovtheorem}]
We proceed to verify conditions \eqref{defforlocallipschitzcont} and \eqref{igexistencecondition} for the Markovian system \eqref{markovapproximation} whose generator is given by
\begin{align*}
    \CL f(q,p,z)& =\nabla K(p)\cdot \nabla_q f-(\nabla U(q)+\gamma\nabla_p K(p)-\Lambda z)\cdot \nabla_p f-(\Lambda^T \nabla K(p)+\mathbf{A}z)\cdot\nabla_z f\\
    &\qquad+\gamma\beta^{-1}\Delta_p f+\frac{1}{2}\mathrm{Tr}\Big(\Sigma\Sigma^T\nabla_z^2 f\Big).
\end{align*}
Condition \eqref{defforlocallipschitzcont} follows from Lemma \ref{relativistickineticenergylipschitzcontinuous} and the fact that $U\in C^{\infty}(\IR)$. Concerning condition \eqref{igexistencecondition}, we recall the formula of the Hamiltonian of the system \eqref{markovapproximation} defined in \eqref{Hamiltonian}:
\begin{equation*}
H\left(q,p,z\right)=U(q)+K(p)+\frac{1}{2}|z|^2.   
\end{equation*}
Applying $\CL$ to $H$, we have
\begin{align*}
   \CL H&= -\gamma c^2\frac{|p|^2}{{m^2c^2+|p|^2}}-z^T {\bf{A}}z+\gamma\beta^{-1}\Delta K (p)+\frac{1}{2}\text{tr}\Sigma \Sigma^T
   \\&\leq C,
\end{align*}
for some $C>0$. The last implication follows from the hypothesis that $\bf{A}$ is positive definite and Lemma \ref{relativistickineticenergylipschitzcontinuous}. It follows that condition \eqref{igexistencecondition} is satisfied by choosing $ V=H+1$.
\end{proof}

\section{Ergodicity}
\label{sec: ergodicity}
In this section, we prove Theorem \ref{polynomialerogicity} on the polynomial ergodicity of the Markovian system \eqref{GRLEpqz} when $\eta$ can be expressed as a finite sum of exponentials as in \eqref{exponentialmemorykernel}. As already mentioned in the introduction, we adopt the framework of \cite{hairer2009hot}, which in turn was built upon the techniques developed earlier in \cite{ bakry2008rate, douc2009subgeometric, fort2005subgeometric}. For self-consistency, the framework of \cite{hairer2009hot} will be summarized in Appendix \ref{Ergodicity Results}. Following this framework, the proof of estimate \eqref{eq: ergodicity estimate} consists of three main steps, namely, verifying the H\"ormander's condition, proving the solvability of the associated control problem, and constructing a suitable Lyapunov function.

The H\"ormander's condition, which ensures the smoothness of the system's transition probability, will be verified by applying the classical H\"ormander's Theorem \cite{hormander1967hypoelliptic}, which asserts that the state space may be generated by the collection of vector fields jointly induced by the diffusion and the drifts. Since we are dealing with finite-dimensional settings, this will follow from direct computations on Lie brackets. The solvability of the associated control problem can be established by the Support Theorem \cite{stroock1972degenerate} showing that one can always find appropriate controls allowing for driving the dynamics to any bounded ball. The proof of H\"ormander theorem and solvability condition are rather standard and will be discussed in Section \ref{Polynomial Ergodicity}.
The third step requires the construction of a Lyapunov function, which is an energy-like function $V$ satisfying an inequality of the form
\begin{align} \label{ineq:Lyapunov:polynomial-mixing}
\frac{d}{d t}\mathbb{E}\big[ V(q(t),p(t),z(t)) \big]\leq  -c_1\mathbb{E}\big[ V(q(t),p(t),z(t))^\alpha\big]+c_2,\quad t\geq 0,
\end{align}
for a suitable constant $\alpha\in(0,1]$. The construction of such a function $V$ is highly nontrivial due to the lack of strong dissipation in the $p$-direction as well as the impact of the nonlinearity of the relativistic kinetic energy. We are only able to prove \eqref{ineq:Lyapunov:polynomial-mixing} for some $\alpha\in(0,1)$, thus yielding only an algebraic mixing rate as stated in Theorem \ref{polynomialerogicity}. This will be done in Section \ref{Lyapunov Functions for the Generalised Relativistic Langevin Equation}.
\subsection{Invariant measure}
We first prove that  the Gibbs distribution given by \eqref{Gibbs} is indeed an invariant probability measure for \eqref{markovapproximation} (and thus for \eqref{GRLEpqz}).
\begin{lemma}
Suppose $U\in C^{\infty}(\IR)$ satisfies Assumption \ref{UAssumptions}. Then the Gibbs distribution $\rho_\beta$ defined in \eqref{Gibbs} is an invariant measure of \eqref{markovapproximation}.
\end{lemma}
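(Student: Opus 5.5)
The plan is to show that $\rho_\beta$ solves the stationary Fokker--Planck equation, i.e.\ $\mathcal{L}^*\rho_\beta=0$. Equivalently, I will show $\int \mathcal{L}f\,\df\rho_\beta=0$ for all $f\in C_c^\infty(\IR^{(2+k)d'})$, where $\mathcal{L}$ is the generator written in the proof of Theorem \ref{existenceuniquenessmarkovtheorem}. Combined with the well-posedness from Theorem \ref{existenceuniquenessmarkovtheorem}, this yields invariance via the standard argument (e.g.\ Echeverría's theorem or Itô's formula plus a density argument). First I check that $\rho_\beta$ is a genuine probability measure. By Assumption \ref{UAssumptions}(2), $e^{-\beta U}$ is integrable; $e^{-\beta K}\le e^{-\beta c|p|}$ is integrable; and the $z$-part is Gaussian. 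Hence $Z<\infty$.

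Next I split $\mathcal{L}=\mathcal{L}_{\mathrm{ham}}+\mathcal{L}_{p}+\mathcal{L}_{z}$ and treat each piece separately. The Hamiltonian part is
\[
\mathcal{L}_{\mathrm{ham}}f=\nabla K\cdot\nabla_q f-\nabla U\cdot\nabla_p f+\Lambda z\cdot\nabla_p f-\Lambda^T\nabla K\cdot\nabla_z f .
\]
It is a divergence-free vector field: its $q$-component depends only on $p$, its $p$-component only on $(q,z)$, and its $z$-component only on $p$. It also annihilates $H$, since
\[
\nabla K\cdot\nabla U-\nabla U\cdot\nabla K+\Lambda z\cdot\nabla K-\Lambda^T\nabla K\cdot z=0 .
\]
Integrating by parts therefore gives $\int\mathcal{L}_{\mathrm{ham}}f\,e^{-\beta H}=0$.

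The friction--noise part in $p$ is
\[
\mathcal{L}_p f=-\gamma\nabla K\cdot\nabla_p f+\gamma\beta^{-1}\Delta_p f=\gamma\beta^{-1}e^{\beta K}\nabla_p\cdot\big(e^{-\beta K}\nabla_p f\big),
\]
which integrates to zero against $e^{-\beta K}$. Similarly, using $\Sigma\Sigma^T=2\beta^{-1}\mathbf{A}$ with $\mathbf{A}$ symmetric,
\[
\mathcal{L}_z f=-\mathbf{A}z\cdot\nabla_z f+\beta^{-1}\mathrm{Tr}(\mathbf{A}\nabla_z^2 f)=\beta^{-1}e^{\beta|z|^2/2}\nabla_z\cdot\big(e^{-\beta|z|^2/2}\mathbf{A}\nabla_z f\big),
\]
which also integrates to zero. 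Summing the three pieces gives the claim.

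The main obstacle is the passage from $\int\mathcal{L}f\,\df\rho_\beta=0$ on test functions to genuine invariance $\rho_\beta P_t=\rho_\beta$, because the coefficients are unbounded. I would handle this as follows. Since $\nabla K$ is bounded and the coefficients are locally Lipschitz, the martingale problem is well posed by Theorem \ref{existenceuniquenessmarkovtheorem}. Then either (a) appeal to Echeverría's theorem, or (b) take $f\in C_c^\infty$, use $P_tf$ and Kolmogorov's backward equation, and control the tails via the Lyapunov bound $\mathcal{L}H\le C$ from Theorem \ref{existenceuniquenessmarkovtheorem}. That bound gives $\IE H(X_t)\le H(x)+Ct$, which lets me localize with stopping times and pass to the limit by dominated convergence.
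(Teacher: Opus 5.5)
Your proposal is correct and is essentially the paper's argument in weak (dual) form. The paper rewrites the system as $\df X=(J-D)\nabla H\,\df t+\sqrt{2\beta^{-1}D}\,\df\mathbf{W}$ and checks $\mathcal{L}^*\rho_\beta=0$ pointwise, using the antisymmetry of $J$ and $\nabla\rho_\beta=-\beta\nabla H\rho_\beta$; this is exactly your splitting into a divergence-free Hamiltonian part that annihilates $H$, plus the $p$- and $z$-blocks of $D$ written in divergence form. Your reason for the divergence-free property (each component depends only on the other variables) is more accurate than the paper's stated identity $\mathrm{div}(J\nabla H)=J\nabla H\cdot\nabla H$. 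Your extra steps, namely finiteness of $Z$ from Assumption~\ref{UAssumptions}(2) and the upgrade from $\int\mathcal{L}f\,\df\rho_\beta=0$ on $C_c^\infty$ to $\rho_\beta P_t=\rho_\beta$ via Echeverr\'{\i}a's theorem and well-posedness of the martingale problem, are omitted in the paper and are handled correctly by your option (a).
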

\begin{proof}
Note that \eqref{markovapproximation} can be written in a more compact form as follows
\begin{equation}
\df X(t)=J\nabla H(X(t))\,\df t-D\nabla H(X(t))\,\df t+\sqrt{2\beta^{-1}D}\,\df\mathbf{W}(t),    
\label{eq: X}
\end{equation}
where $X=(q,p,z)^T$ and
\begin{equation}
\label{J and D}
J=\begin{pmatrix}
    0&1&0\\
    -1&0&\Lambda\\
    0&-\Lambda^T&0
\end{pmatrix},\quad D=\begin{pmatrix}
    0&0&0\\
    0&\gamma&0\\
    0&0&\mathbf{A}
\end{pmatrix},\quad \mathbf{W}(t)=\begin{pmatrix}
    0\\ W(t)\\ \W(t)
\end{pmatrix}.    
\end{equation}
It is clear that $J$ is anti-symmetric and $D$ is symmetric positive semi-definite.
In fact, to show that  \eqref{eq: X} is indeed the same as \eqref{markovapproximation}, we compute each term in the RHS of \eqref{eq: X} explicitly:
\begin{align*}
&J\nabla H=\begin{pmatrix}
    0&1&0\\
    -1&0&\Lambda\\
    0&-\Lambda^T&0
\end{pmatrix}\begin{pmatrix}
    \nabla U(q)\\ \nabla K(p)\\ z
\end{pmatrix}=\begin{pmatrix}
    \nabla K(p)\\
    -\nabla U(q)+\Lambda z\\
    -\Lambda^T \nabla K(p)
\end{pmatrix},
\\& D\nabla H=D=\begin{pmatrix}
    0&0&0\\
    0&\gamma&0\\
    0&0&\mathbf{A}
\end{pmatrix}\begin{pmatrix}
    \nabla U(q)\\ \nabla K(p)\\ z
\end{pmatrix}=\begin{pmatrix}
0\\ \gamma\nabla K(p)\\ \mathbf{A}z
\end{pmatrix},
\\& \sqrt{2\beta^{-1}D}=\begin{pmatrix}
    0&0&0\\
    0&\sqrt{2\beta^{-1}\gamma}&0\\
    0&0&\sqrt{2\beta^{-1}\mathbf{A}}
\end{pmatrix}=\begin{pmatrix}
    0&0&0\\
    0&\sqrt{2\beta^{-1}\gamma}&0\\
    0&0&\Sigma
    \end{pmatrix}.
\end{align*}
Substituting these expressions back to \eqref{eq: X} we get
\begin{align*}
\df \begin{pmatrix}
 q(t)\\
 p(t)\\
 z(t)
\end{pmatrix}&=  \begin{pmatrix}
    \nabla K(p)\\
    -\nabla U(q)+\Lambda z\\
    -\Lambda^T \nabla K(p)
\end{pmatrix}\, \df t-\begin{pmatrix}
0\\ \gamma\nabla K(p)\\ \mathbf{A}z
\end{pmatrix}\,\df t+\begin{pmatrix}
    0&0&0\\
    0&\sqrt{2\beta^{-1}\gamma}&0\\
    0&0&\Sigma
    \end{pmatrix}\begin{pmatrix}
        0\\ \df W(t)\\ \df \W(t)
    \end{pmatrix}
\\&=\begin{pmatrix}
    \nabla K(p)\\
    -\nabla U(p)+\Lambda z-\gamma\nabla K(p)\\
    -\Lambda^T \nabla K(p)-\mathbf{A}z
\end{pmatrix}\,\df t+\begin{pmatrix}
    0\\ \sqrt{2\beta^{-1}\gamma}\,\df W(t)\\ \Sigma \df \W(t)
\end{pmatrix},
\end{align*}
which is precisely \eqref{markovapproximation}. The advantage of the compact form \eqref{eq: X} is that it is very convenient for the verification that $\rho_\beta$ is an invariant measure of \eqref{markovapproximation}. Indeed, using this form, the adjoint generator of \eqref{markovapproximation}, $\CL^*$, is given by
\[
\CL^*\rho=-\mathrm{div}(J\nabla H\rho)+\mathrm{div}(D\nabla H\rho)+\beta^{-1}\mathrm{div}(D\nabla\rho)=-\mathrm{div}(J\nabla H\rho)+\mathrm{div}[D(\nabla H\rho+\beta^{-1}\nabla\rho)].
\]
When $\rho=\rho_\beta$ we have $
\nabla\rho_\beta=-\beta\nabla H\rho$. In addition  we have $\mathrm{div}(J\nabla H)=J\nabla H\cdot\nabla H=0$ due to the anti-symmetry of $J$. It follows that $\beta^{-1}\nabla\rho+\nabla H\rho=0$ and
\begin{align*}
&\mathrm{div}(J\nabla H\rho)=\mathrm{div}(J\nabla H)\rho+J\nabla H\cdot\nabla \rho=\mathrm{div}(J\nabla H)\rho-\beta J\nabla H\cdot\nabla H \rho=0-0=0.
\end{align*}
Therefore, $\CL^*\rho_\beta=0$, implying $\rho_\beta$ is an invariant measure of \eqref{markovapproximation}, as claimed.
\end{proof}

\subsection{Construction of Lyapunov functions}
\label{Lyapunov Functions for the Generalised Relativistic Langevin Equation}
Since the mass constant $m$ does not affect the analysis, throughout the rest of the paper, we assume $m=1$. We also adopt the notation
\[\epsilon=\frac{1}{c^2}.\]
So, the relativistic kinetic energy $K(p)$ from \eqref{relativistic KE} is reduced to
\begin{align*}
    K(p) = \frac{1}{\epsilon}\sqrt{1+\epsilon|p|^2}.
\end{align*}
Inspired by \cite{duong2024asymptotic}, we now construct $\phi$-Lyapunov functions (see Definition \ref{philyapunov}) for the GRLE, where $\phi (t)=\zeta t^{1-\frac{1}{2n}}$ for some positive integer $n$ and $\zeta>0$. We first consider the simpler case when $\gamma>0$.
\begin{proposition}
\label{lyaponovconditiongammageq0}
    Let $V_1$ be defined by,
    \begin{equation} \label{form:V_1}
        V_1(q,p,z)=H_1(q,p,z)^2+\epsilon\langle q,p\rangle+\kappa,
    \end{equation}
where,
\begin{align} \label{hamiltonianforlyapunovfunction}
H_1(q,p,z)=\epsilon H(q,p,z) = \epsilon U(q) + \sqrt{1+\epsilon|p|^2}+\frac{1}{2}\epsilon\|z\|^2,
\end{align}
and $\kappa$ is sufficiently large. Then for all $n\in\IN$ and $\gamma>0$, 
\begin{equation}
\label{lyapunovcondition}
    \CL V_1^n\leq -\zeta V_1^{n-\frac{1}{2}}+C,
\end{equation}
for all sufficiently small $\epsilon=\epsilon(n,\gamma,\alpha, \lambda,M)>0$ and for some positive constants $\zeta=\zeta(n,\gamma,\alpha, \lambda, M)$ and $C=C(\epsilon,n,\gamma, \alpha,\lambda,M,\epsilon)$. In the above, $\CL$ denotes the infinitesimal operator for \eqref{GRLEpqz}, which is given by
\begin{equation}
\CL f(q,p,z)=J\nabla H\cdot \nabla f -D\nabla H\cdot \nabla f +\beta^{-1}\mathrm{div}(D\nabla f),
\end{equation}
where $J$ and $D$ are given in \eqref{J and D} with $\Lambda=\mathrm{diag}(\lambda_1,\ldots, \lambda_M)$ and $\bf{A}=\mathrm{diag}(\alpha_1,\ldots,\alpha_M)$.
\end{proposition}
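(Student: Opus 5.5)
We compute $\CL V_1$ first and then pass to $\CL V_1^n$ through
\[
\CL V_1^n = nV_1^{n-1}\CL V_1 + n(n-1)V_1^{n-2}\,\beta^{-1}\big(\gamma|\nabla_p V_1|^2+\textstyle\sum_i\alpha_i|\nabla_{z_i}V_1|^2\big).
\]
It therefore suffices to prove two things. First,
\[
\CL V_1 \le -\zeta_1 V_1^{1/2}+C_1 .
\]
Second, the carré du champ term is bounded by $o(1)\cdot V_1^{3/2}+C$ as $\epsilon\to0$, with the small constant absorbed after multiplying by $V_1^{n-2}$. Before starting, we record some bounds. By Assumption \ref{UAssumptions}, $\kappa$ large makes $V_1\ge1$ and $V_1\asymp H_1^2$, since $\epsilon|\langle q,p\rangle|\le \epsilon|q||p|\lesssim H_1^2$ once we use $|q|\lesssim U^{1/(\lambda+1)}$ and $\sqrt\epsilon|p|\le\sqrt{1+\epsilon|p|^2}$. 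We also use $\epsilon\nabla K(p)=\epsilon p/\sqrt{1+\epsilon|p|^2}$, which gives $|\nabla K|\le \epsilon^{-1/2}$.

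\textbf{Step 1: the $H_1^2$ part.} Since $\CL H=-\gamma|\nabla K|^2-\sum_i\alpha_i|z_i|^2+\beta^{-1}(\gamma\Delta K+\sum_i\alpha_i d)$, we have
\[
\CL H_1^2=2H_1\,\epsilon\,\CL H+2\epsilon^2\beta^{-1}\big(\gamma|\nabla K|^2+\textstyle\sum\alpha_i|z_i|^2\big).
\]
This is bounded by $-2\epsilon H_1\big(\gamma|\nabla K|^2+\sum\alpha_i|z_i|^2\big)+C\epsilon H_1+C$. It gives dissipation in $p$ and $z$ but none in $q$, since $|\nabla K|^2\to1/\epsilon$ only for large $|p|$.

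\textbf{Step 2: the $\epsilon\langle q,p\rangle$ part.} We compute
\[
\CL\langle q,p\rangle=\langle\nabla K,p\rangle-\langle q,\nabla U\rangle-\gamma\langle q,\nabla K\rangle+\textstyle\sum_i\lambda_i\langle q,z_i\rangle .
\]
Assumption \ref{UAssumptions}(1) gives $-\langle q,\nabla U\rangle\le -a_1|q|^{\lambda+1}+a_2$, which is comparable to $-U$ and supplies the missing $q$-dissipation. We also have $\langle \nabla K,p\rangle\le |p|^2/\sqrt{1+\epsilon|p|^2}$, so $\epsilon\langle\nabla K,p\rangle\le\sqrt\epsilon|p|\le \sqrt{1+\epsilon|p|^2}$. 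The cross terms $\gamma\epsilon|q||\nabla K|$ and $\epsilon\lambda_i|q||z_i|$ are handled by Young's inequality, splitting them between $\epsilon|q|^{\lambda+1}$ and the dissipative quantities $\epsilon H_1|\nabla K|^2$ and $\epsilon H_1|z|^2$ from Step 1. Here $H_1\ge1$ helps, and $\lambda\ge1$ ensures the $q$-powers are dominated.

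\textbf{Step 3 (main obstacle): assembling $-\zeta V_1^{1/2}$.} We need the total to dominate $H_1\asymp V_1^{1/2}$. Each of the three components of $H_1$ must be covered:
\begin{itemize}
\item the $\epsilon U$ component by $-\epsilon a_1|q|^{\lambda+1}$;
\item the $\epsilon|z|^2$ component by $-\epsilon H_1|z|^2\ge-\epsilon|z|^2$;
\item the $\sqrt{1+\epsilon|p|^2}$ component by $-2\gamma\epsilon H_1|\nabla K|^2$.
\end{itemize}
The third item is delicate. We have $\epsilon|\nabla K|^2=\epsilon|p|^2/(1+\epsilon|p|^2)$, which is of order $1$ when $\sqrt\epsilon|p|\gtrsim1$. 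It therefore gives $-\gamma H_1$ times a constant exactly in the region where the kinetic part is large. Where $\sqrt\epsilon|p|\lesssim1$, the kinetic part is bounded and goes into $C$.

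The positive term $+\sqrt{1+\epsilon|p|^2}$ coming from $\epsilon\langle\nabla K,p\rangle$ must be beaten by $\gamma H_1\,\epsilon|\nabla K|^2$. This works once $\gamma H_1$ is large, or more precisely by comparing constants with $\epsilon$ small. This is why $\epsilon$ depends on $\gamma$, and it is the step where constants must be tracked carefully.

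\textbf{Step 4: the second-order term.} We have $\nabla_pV_1=2H_1\epsilon\nabla K+\epsilon q$ and $\nabla_{z_i}V_1=2H_1\epsilon z_i$. Hence
\[
|\nabla_pV_1|^2\lesssim H_1^2\,\epsilon+\epsilon^2|q|^2, \qquad |\nabla_{z_i}V_1|^2\lesssim \epsilon H_1^3 .
\]
Both are $\lesssim \epsilon^{1/2}V_1^{3/2}+C$. For $\epsilon$ small they are absorbed into $-\tfrac{\zeta}{2}nV_1^{n-1/2}$. Then Young's inequality on the lower-order constants $V_1^{n-1}$ yields $\CL V_1^n\le-\zeta V_1^{n-1/2}+C$.
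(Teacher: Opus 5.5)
Your overall architecture matches the paper's: the carr\'e du champ reduction to $n=1$, $V_1\asymp H_1^2$, $z$-dissipation from $\CL H_1^2$, $q$-dissipation from the cross term, and absorption of the $O(\epsilon V_1^{3/2})$ second-order term. But Step 3 has a genuine gap, exactly at the point you flagged. Since $\epsilon\langle \nabla K(p),p\rangle=\sqrt{1+\epsilon|p|^2}\,\epsilon|\nabla K(p)|^2$, the net kinetic contribution to $\CL(H_1^2+\epsilon\langle q,p\rangle)$ is
\[
-\big(2\gamma H_1-\sqrt{1+\epsilon|p|^2}\big)\,\epsilon|\nabla K(p)|^2 .
\]
In the region where $q,z$ stay bounded and $|p|\to\infty$, one has $H_1/\sqrt{1+\epsilon|p|^2}\to 1$. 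Neither ``$\gamma H_1$ large'' nor ``$\epsilon$ small'' changes this ratio, because both terms are, up to $O(\epsilon)$ corrections, functions of $s=\sqrt{\epsilon}\,|p|$ alone. Concretely, at $q=0$, $z=0$ one computes
\[
\CL V_1=(1-2\gamma)\frac{s^2}{\sqrt{1+s^2}}+2\epsilon d\beta^{-1}\Big(\sum_{i=1}^M\alpha_i\Big)\sqrt{1+s^2}+O(1),
\]
which tends to $+\infty$ as $s\to\infty$ whenever $\gamma\le \frac12$, while $-\zeta V_1^{1/2}+C\to-\infty$. Since the carr\'e du champ term is nonnegative, $\CL V_1^n\ge nV_1^{n-1}\CL V_1$, so the claimed inequality fails for every $n$. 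With the cross-term coefficient fixed at $\epsilon$, your argument can only close when $2\gamma>1$, and no bookkeeping rescues it below that. Smallness of $\epsilon$ is needed only to absorb the $O(\epsilon H_1)$ terms, not for this comparison.

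The paper's proof does not resolve this either. When combining its two inequalities it passes from $-2\gamma\sqrt{1+\epsilon|p|^2}+\sqrt{\epsilon}|p|$ to $-\gamma\sqrt{1+\epsilon|p|^2}$, which holds for all $p$ only if $\gamma\ge 1$.

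The missing idea is to weight the perturbation by a small $\gamma$-dependent constant, as the paper itself does for $\gamma=0$ via $\epsilon_2=\Lambda_2\epsilon_1$. Take $V_1=H_1^2+\delta\epsilon\langle q,p\rangle+\kappa$ with $0<\delta<2\gamma$. Using $H_1\ge\sqrt{1+\epsilon|p|^2}$, the kinetic part is then bounded by
\[
-(2\gamma-\delta)\frac{\epsilon|p|^2}{\sqrt{1+\epsilon|p|^2}}\le-(2\gamma-\delta)\big(\sqrt{1+\epsilon|p|^2}-1\big).
\]
The $q$-dissipation becomes $-\delta\epsilon a_1|q|^{\lambda+1}$, and your Young splittings of the remaining cross terms simply carry an extra factor $\delta$. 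Steps 1, 2 and 4 then go through with $\zeta$ depending on $\gamma$ but not on $\epsilon$, which is exactly what the absorption in Step 4 requires.
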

The form of the Lyapunov function given by \eqref{form:V_1} is motivated from the Lyapunov approach in \cite{duong2024asymptotic}. We notice that, instead of the Hamiltonian as often used for the classical Langevin dynamics, we need to employ its square due to the weak dissipation from the relativistic kinetic energy $K(p)$. On the other hand, the idea of perturbing a function of Hamiltonian by lower order terms such as the cross term $\langle q,p\rangle$ in \eqref{form:V_1} is well-known in the literature in the framework of  hypocoercivity method \cite{villani2009hypocoercivity}. It is a powerful method and has been employed widely when constructing Lyapunov functions for degenerate systems where the noises act only on some of direction of the phase spaces, see for instance \cite{duong2024asymptotic, hairer2009hot,  lu2019geometric} for Langevin-type dynamics. 

\begin{proof}[Proof of Proposition \ref{lyaponovconditiongammageq0}]
In the below, $C,\zeta_1>0$ may change from line to line, and $\zeta_1$ is independent of $\epsilon$. We first consider the case when $n=1$ and apply the generator $\mathcal{L}$ to $H_1$ to obtain
\begin{align}
      \mathcal{L}H_1&=-\gamma\frac{\epsilon|p|^2}{1+\epsilon|p|^2}+\gamma\beta^{-1}\frac{d\epsilon+(d-1)\epsilon^2|p|^2}{(1+\epsilon|p|^2)^\frac{3}{2}}-\epsilon\sum_{i=1}^M\alpha_i| z_i|^2+\epsilon d\sum_{i=1}^M\alpha_i\beta^{-1}\label{infinitesimaloperatorH}\\
    &\leq -\gamma\frac{\epsilon|p|^2}{1+\epsilon|p|^2}+\gamma\beta^{-1}(2d-1)\epsilon-\epsilon\sum_{i=1}^M\alpha_i| z_i|^2+\epsilon d\sum_{i=1}^M\alpha_i\beta^{-1}. \notag
\end{align}
As a consequence, applying $\mathcal{L}$ to $H_1^2$, we get
\begin{align}
\mathcal{L}H_1^2=&2H_1\mathcal{L}H_1+2\gamma\beta^{-1}\frac{\epsilon^2|p|^2}{1+\epsilon|p|^2}+\epsilon^2\sum_{k=1}^M2\alpha_i\beta^{-1}|z_i|^2 \label{infinitesimaloperatorH2}\\
    \leq &2H_1\CL H_1+2\gamma\beta^{-1}\epsilon+\epsilon^2\sum_{i=1}^M2\alpha_k\beta^{-1}|z_i|^2. \notag
\end{align}
Next, considering the cross term $\langle q,p\rangle$, we have

\[\mathcal{L}\langle q,p\rangle=\langle p,\nabla K(p) \rangle -\langle q,\nabla U(q) \rangle -\gamma \langle q,\nabla K(p) \rangle +\sum_{i=1}^M\lambda_i \langle q,z_i \rangle .\]
By applying Assumption \ref{UAssumptions} and Cauchy-Schwarz inequality,

\begin{align}
    \CL\langle q,p \rangle &\leq \frac{|p|^2}{\sqrt{1+\epsilon|p|^2}}-\zeta_1|q|^{\lambda+1}+C-\gamma\frac{\langle q,p\rangle}{\sqrt{1+\epsilon|p|^2}}+\sum_{i=1}^M\lambda_i\langle q,z_i \rangle\\
    &\leq \frac{|p|+\gamma|q|}{\sqrt{\epsilon}}-\zeta_1|q|^{\lambda+1}+C+\sum_{i=1}^M\lambda_i\langle q,z_i \rangle.
\end{align}
So, we have
\begin{align*}
    \CL\epsilon\left\langle q,p\right\rangle &\leq \sqrt{\epsilon}|p|+\sqrt{\epsilon}\gamma^2|q|+C-\epsilon \zeta_1|q|^{\lambda+1}+\sum_{i=1}^M\lambda_i|q|^2+\epsilon^2\sum_{i=1}^M\lambda_i|z_i|^2,\\
    &\leq \sqrt{\epsilon}|p| -\frac{\epsilon}{2}\zeta_1|q|^{\lambda+1}+\epsilon^2\sum_{i=1}^M\lambda_1|z_i|^2+C.
\end{align*}
Putting the above together, we obtain

\begin{align}
    \mathcal{L}\left(H_1^2+ \epsilon\langle q,p\rangle\right) &\leq 2H_1\left(-\gamma\frac{\epsilon|p|^2}{1+\epsilon|p|^2}+(2d+1)\epsilon-\epsilon\sum_{i=1}^M\alpha_i| z_i|^2+\epsilon d\sum_{i=1}^M\alpha_i\beta\right)\notag \\&+\sqrt{\epsilon}|p| -\frac{\epsilon}{2}\zeta_1|q|^{\lambda+1}+\epsilon^2\sum_{i=1}^M\lambda_1|z_i|^2+C+2\gamma\beta^{-1}\epsilon+\epsilon^2\sum_{i=1}^M2\alpha_i\beta^{-1}|z_i|^2. \label{firstV1inequality}
\end{align}
We have the following inequality,
\begin{align}
\label{innerproductpkpbound}
    \epsilon\langle p,\nabla K(p)\rangle= \frac{\epsilon|p|^2}{\sqrt{1+\epsilon|p|^2}}\geq \sqrt{1+\epsilon|p|^2}-1.
\end{align}
Using \eqref{innerproductpkpbound}, as well as the inequality
\begin{equation}
\label{H1inequality}
    H_1\geq \sqrt{1+\epsilon|p|^2}+\frac{\epsilon}{2}\sum_{i=1}^M|z_i|^2\ge \frac{\sqrt{\epsilon}|p|}{2}+\frac{1}{2}+\frac{\epsilon}{2}\sum_{i=1}^M|z_i|^2,
\end{equation}
we find
\begin{align}
     &2H_1\left(-\gamma\frac{\epsilon|p|^2}{1+\epsilon|p|^2}+(2d+1)\epsilon-\epsilon\sum_{i=1}^M\alpha_i| z_i|^2+\epsilon d\sum_{i=1}^M\alpha_i\beta\right) \notag \\&\leq -2\gamma\frac{\epsilon|p|^2}{\sqrt{1+\epsilon|p|^2}}+2(2d-1)\epsilon H_1-2\epsilon H_1\sum_{i=1}^M\alpha_i| z_i|^2 + 2\epsilon d\sum_{i=1}^M\alpha_i\beta H_1, \notag\\
     & \label{LHinequality} \leq -2\gamma\sqrt{1+\epsilon|p|^2}+2(2d-1)\epsilon H_1+2\epsilon d\sum_{i=1}^M\alpha_i\beta H_1-\epsilon \sum_{i=1}^M\alpha_i| z_i|^2-\epsilon^2\sum_{i=1}^M\alpha_i|z_i|^4.
\end{align}
By combining \eqref{firstV1inequality} and \eqref{LHinequality}, we get
\begin{align*}
    \mathcal{L}\left(H_1^2+\epsilon_1\langle q,p\rangle\right)\leq-&\gamma\sqrt{1+\epsilon|p|^2}+2(2d-1)\epsilon H_1+2\epsilon d\sum_{i=1}^M\alpha_i\beta H_1\\&-\frac{\epsilon}{2}\zeta_1|q|^{\lambda+1}-\sum_{i=1}^M\alpha_i\epsilon|z_i|^2+\epsilon^2\sum_{i=1}^M\left((\lambda_i+\alpha\beta^{-1})|z_i|^2-\alpha_i|z_i|^4\right)+C.
\end{align*}
By applying Assumption \ref{UAssumptions}, it holds that
\begin{align*}
    -\frac{\epsilon}{2}|q|^{\lambda+1}\leq -\epsilon \zeta_1 U(q)+C.
\end{align*}
Thus, choosing $\epsilon$ sufficiently small, we can estimate
\begin{align}
    \CL V_1 &\leq-\zeta_1 H_1+C, \label{Lbound} 
\end{align}
By our definition of $V_1$, we can choose $\epsilon>0$ and $\kappa>0$ such that,
\begin{equation}
\label{Vbound}
    \zeta_1 H_1^2-C\leq V_1\leq CH_1^2+C.
\end{equation}
Note that $V_1$ is also bounded below by 1. Using this fact, and combining \eqref{Lbound} and \eqref{Vbound},
\begin{align}
\label{Lvbound}
    \mathcal{L}V_1\leq-\zeta_1\sqrt{V_1}+C.
\end{align}
This finishes the proof of \eqref{lyapunovcondition} when $n=1$.

For $n>1$, we apply $\CL$ to $V_1^n$ and obtain the identity

\[\CL V_1^n=nV^{n-1}\mathcal{L}V_1+n(n-1)V_1^{n-2}\Big( \gamma \beta^{-1}\Big|  2H_1\frac{\epsilon p}{\sqrt{1+\epsilon|p|^2}}+\epsilon q   \Big|^2 +\sum_{i=1}^M \alpha_i\beta^{-1}\left|2H_1\epsilon z_i \right|^2 \Big).\]
Since $\frac{\epsilon |p|}{\sqrt{1+\epsilon|p|^2}}\leq 1$ for all $\epsilon\in(0,1)$ and by Assumption \ref{UAssumptions}, we have
\begin{align*}
    \Big|  2H_1\frac{\epsilon p}{\sqrt{1+\epsilon|p|^2}}+\epsilon q   \Big|^2\leq CH_1^2+C.
\end{align*}
Furthermore, by definition of $H_1$, we can estimate 

\begin{align*}
    \sum_{i=1}^M|2H_1\epsilon z_i|^2\leq 4\epsilon H_1^3.
\end{align*}
By applying \eqref{Vbound}, we have

\begin{align*}
    V_1^\frac{3}{2}\geq \zeta_1H_1^3-C.
\end{align*}
On the other hand, by applying \eqref{Lvbound}, we get
\begin{align*}
    nV^{n-1}\CL V_1\leq -\zeta_1 V_1^{n-\frac{1}{2}}+CV_1^{n-1}.
\end{align*}
Putting the above together, we obtain

\begin{align*}
\CL V_1^n &\leq -\zeta_1 V_1^{n-\frac{1}{2}}+ CV_1^{n-1}+n(n-1)V_1^{n-2}\Big(CV_1 +\epsilon \zeta _1 V_1^\frac{3}{2}+C\Big)\\
&\leq -\frac{\zeta_1-\epsilon \zeta_1n(n-1)}{2}V_1^{n-\frac{1}{2}}+C.
\end{align*}
Since $\zeta_1$ is indpendent of $\epsilon$, for $\epsilon$ sufficiently small dependent on $n\in\IN$, \eqref{lyapunovcondition} holds, thus completing the proof of this proposition.

\end{proof}

 We remark that the above proof does not hold for the case $\gamma=0$ since the dissipation term involving $p$ on the right hand side of \eqref{LHinequality} is then cancelled. Therefore, to find the Lyapunov function for this case, we will add an additional perturbation to $V_1$, as well as redefine our choice of scalar multiples for these terms. It is also important to note that we have to further restrict the construction to Lipschitz $\nabla U$.

\begin{proposition}
\label{lyaponovconditiongammaeq0}
   Let $\gamma = 0$, and suppose that $\nabla U$ is Lipschitz. Let $V_2$ be defined by,
\begin{align} \label{form:V_2}
V_2(q,p,z)=H_1(q,p,z)^2+\epsilon_1\sum_{i=1}^M\langle p,z_i\rangle+\epsilon_2\langle q,p\rangle+ \kappa,
\end{align}
where $H_1$ is defined as in \eqref{hamiltonianforlyapunovfunction}, $\epsilon_1=\Lambda_1\epsilon$ and $\epsilon_2=\Lambda_2\epsilon_1$ for $\Lambda_1, \Lambda_2>0$. Then, for all $n\in\IN$, for sufficiently small $\epsilon ,\Lambda_1,\Lambda_2>0$, with  $\epsilon=\epsilon(n,\gamma,\alpha, \lambda,M)$ sufficiently small, and $\Lambda_1=\Lambda_1(\alpha,\lambda)$ and $\Lambda_2=\Lambda_2(\alpha,\lambda)$ sufficiently small, independent of $\epsilon$, we have
\begin{equation} \label{lyapunovcondition:gamma=0}
    \CL V_2^n\leq -\zeta V_2^{n-\frac{1}{2}}+C,
\end{equation}
for some constants $\zeta=\zeta(\epsilon,n,\Lambda_1,\Lambda_2,M) ,\, C=C(\epsilon,n,\Lambda_1,\Lambda_2,M)>0$.
\end{proposition}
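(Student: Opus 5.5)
The proof will follow the same scheme as Proposition \ref{lyaponovconditiongammageq0}: first establish the case $n=1$ in the form $\CL V_2\le -\zeta_1 H_1+C$ together with the two-sided comparison $\zeta_1H_1^2-C\le V_2\le CH_1^2+C$, and then pass to general $n$ via the chain-rule identity for $\CL V_2^n$. Since $\gamma=0$, the computation \eqref{infinitesimaloperatorH} gives
\[
\CL H_1=-\epsilon\sum_i\alpha_i|z_i|^2+\epsilon d\beta^{-1}\sum_i\alpha_i .
\]
Hence $2H_1\CL H_1$ supplies dissipation only in $z$, of size $-\epsilon H_1\sum_i\alpha_i|z_i|^2$. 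The two perturbations are there to move this dissipation to $p$ and then to $q$.

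\textbf{Cross term $\langle p,z_i\rangle$.} Using \eqref{prelmarkov} and \eqref{zrelmarkov} with $\gamma=0$, we get
\[
\CL\langle p,z_i\rangle=-\langle\nabla U(q),z_i\rangle+\sum_j\lambda_j\langle z_j,z_i\rangle-\lambda_i\langle p,\nabla K(p)\rangle-\alpha_i\langle p,z_i\rangle .
\]
The key term is $-\lambda_i\epsilon_1\langle p,\nabla K(p)\rangle$. By \eqref{innerproductpkpbound} it is bounded above by $-\Lambda_1\lambda_i(\sqrt{1+\epsilon|p|^2}-1)$, which provides the missing $p$-dissipation of order $\Lambda_1\sqrt{1+\epsilon|p|^2}$. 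The remaining terms must be absorbed.
\begin{itemize}
\item $\epsilon_1\sum_j\lambda_j|z_j|^2$ is absorbed by $-\epsilon\alpha|z|^2$ once $\Lambda_1$ is small.
\item $\epsilon_1\alpha_i|p||z_i|\le \epsilon_1\alpha_i\big(\delta\sqrt\epsilon|p|\cdot\sqrt\epsilon|p|+\delta^{-1}|z_i|^2\big)$ is the delicate one. I would rather use the dissipation $-\epsilon H_1|z|^2\gtrsim -\epsilon\sqrt\epsilon|p||z|^2$ and write $\epsilon_1|p||z|\le \eta\epsilon^{3/2}|p||z|^2+\eta^{-1}\Lambda_1^2\epsilon^{1/2}|p|$. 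The first piece is absorbed by the $H_1|z|^2$ dissipation. The second is bounded by $C\Lambda_1^2\sqrt{1+\epsilon|p|^2}$, which is dominated by the $\Lambda_1$ gain for $\Lambda_1$ small.
\item $\epsilon_1|\nabla U(q)||z_i|$ requires the Lipschitz hypothesis, which gives $|\nabla U(q)|\le L|q|+C$. Then $\epsilon_1|q||z|\le\delta\epsilon_2|q|^2+\delta^{-1}(\epsilon_1^2/\epsilon_2)|z|^2$, with $\epsilon_1^2/\epsilon_2=\epsilon\Lambda_1/\Lambda_2$. So $\Lambda_1/\Lambda_2$ must be small relative to $\alpha$. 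Since $|q|^2\lesssim |q|^{\lambda+1}+1$ for $\lambda\ge1$, the $q$-part is absorbed by the $q$-dissipation coming from $\epsilon_2\langle q,p\rangle$.
\end{itemize}

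\textbf{Cross term $\langle q,p\rangle$.} As in the previous proof, this term contributes $-\epsilon_2\zeta_1|q|^{\lambda+1}\le-\epsilon_2\zeta_1U+C$. Its bad terms are as follows.
\begin{itemize}
\item $\epsilon_2|p|^2/\sqrt{1+\epsilon|p|^2}\le \Lambda_2\Lambda_1\sqrt{1+\epsilon|p|^2}$ is absorbed by the $p$-gain when $\Lambda_2$ is small.
\item $\epsilon_2\lambda_i|q||z_i|$ is split again between the $q$- and $z$-dissipations.
\end{itemize}

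\textbf{Ordering of parameters and the main obstacle.} The hierarchy is: choose $\Lambda_2$ small, then $\Lambda_1\ll\Lambda_2$ (both in terms of $\alpha,\lambda$), then $\epsilon$ small. The main obstacle is making this ordering consistent. The $p$-dissipation is only of order $\Lambda_1\sqrt{1+\epsilon|p|^2}$, with no extra factor of $H_1$, whereas the $z$-dissipation carries a factor $H_1$. So the terms coupling $p$ with $z$ and $q$ with $z$ must be routed into the $z$-dissipation via Young's inequality with weights that exploit $H_1\gtrsim 1+\sqrt\epsilon|p|+\epsilon|z|^2$, as in \eqref{H1inequality}.

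The outcome of the $n=1$ step should be
\[
\CL V_2\le-\zeta_1\Big(\sqrt{1+\epsilon|p|^2}+\epsilon U+\epsilon|z|^2+\epsilon^2|z|^4\Big)+C\le-\zeta_1H_1+C.
\]
On the other side, $|\epsilon_1\langle p,z\rangle|+|\epsilon_2\langle q,p\rangle|\le C\sqrt{\Lambda_1}(H_1^2+1)$, which gives the comparison with $H_1^2$. Together these yield $\CL V_2\le-\zeta\sqrt{V_2}+C$.

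\textbf{General $n$.} The carré du champ now involves only the $z$-noise, namely $\sum_i\alpha_i\beta^{-1}|2H_1\epsilon z_i+\epsilon_1p|^2$. We bound this by $C\epsilon H_1^3+C\Lambda_1^2H_1^2$, which is at most $C\epsilon V_2^{3/2}+CV_2$. The argument then concludes exactly as in Proposition \ref{lyaponovconditiongammageq0}, taking $\epsilon$ small depending on $n$.
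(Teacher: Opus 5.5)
Your proposal is correct and follows the paper's proof essentially step by step. It uses the same $V_2$, the $p$-dissipation from $-\epsilon_1\lambda_i\langle p,\nabla K(p)\rangle$, and the $q$-dissipation from $\epsilon_2\langle q,p\rangle$. It absorbs $\epsilon_1\alpha_i|p||z_i|$ with the $\epsilon^{3/2}|p||z_i|^2$ part of $-2\epsilon H_1\sum_i\alpha_i|z_i|^2$ plus part of the $p$-gain, which is the paper's condition $\Lambda_1\le\lambda_i/\alpha_i$, and it uses the same chain-rule argument for general $n$.

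Your explicit ordering $\Lambda_1\ll\Lambda_2$ makes precise a step the paper leaves implicit. It is needed to absorb $\epsilon_1\langle\nabla U(q),z_i\rangle$ into the $q$- and $z$-dissipation, since Lipschitz $\nabla U$ forces $\lambda=1$ in Assumption \ref{UAssumptions}. Your $z_i$-gradient $2H_1\epsilon z_i+\epsilon_1 p$ is also the correct one; the paper writes $\epsilon p$.
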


\begin{proof}
In the below, $C,\zeta_1>0$ may change from line to line, and $\zeta_1$ is independent of $\epsilon$. 
When $\gamma=0$, from \eqref{infinitesimaloperatorH}, the generator of \eqref{GLEpqz} acting on $H_1$ is given by
\begin{align}
       \mathcal{L}H_1&=-\epsilon\sum_{i=1}^M\alpha_i| z_i|^2+\epsilon d\sum_{i=1}^M\alpha\beta^{-1}. \label{infinitesimaloperatorH0gamma}
\end{align}
We compute
\begin{align}
\mathcal{L}H_1^2=&2H_1\mathcal{L}H_1+\epsilon^2\sum_{i=1}^M2\alpha_i\beta^{-1}|z_i|^2.\label{infinitesimaloperatorH20gamma}
  \end{align}
Similar to the proof of Proposition \ref{lyaponovconditiongammageq0}, we first consider the case when $n=1$. By combining \eqref{infinitesimaloperatorH0gamma} and \eqref{infinitesimaloperatorH20gamma}, along with \eqref{H1inequality}, we can estimate
\begin{align}
    \CL H_1^2\leq-\epsilon^\frac{3}{2}\sum_{i=1}^M\alpha_i|p||z_i|^2-\epsilon^2\sum_{i=1}^M\alpha_i|z_i|^4-{\epsilon}\sum_{i=1}^M\alpha_i| z_i|^2+\epsilon d\sum_{i=1}^M2\alpha_i\beta^{-1} H_1+\epsilon^2\sum_{i=1}^M2\alpha_i\beta^{-1}|z_i|^2.
    \label{zerogammaLH2bound}
\end{align}
Concerning the fist inner product term of \eqref{form:V_2}, we compute
\begin{align*}
\CL \langle p,z_i  \rangle &= -\langle \lambda_i\nabla K (p),p  \rangle  -\langle  \alpha_iz_i,p  \rangle  - \langle \nabla U(q),z_i\rangle+  \langle z_i, \sum_{j=1}^M  \lambda_j z_j   \rangle,
\end{align*}
for $i=1,...,M$. Note that the Cauchy-Schwarz inequality and the Lipschitz continuity of $\nabla U$ yield,
\[ -\epsilon\langle \nabla U(q),z_i\rangle\leq \epsilon \zeta_1(1+|q|)|z|.\]
By applying \eqref{innerproductpkpbound}, we get

\begin{align*}
\epsilon_1\mathcal{L}\langle p,z_i  \rangle    
&\leq {-\Lambda_1\lambda_i\sqrt{1+\epsilon|p|^2}}+\Lambda_1\epsilon\alpha_i|z_i||p| +  \epsilon\zeta_1|q|^2+ \epsilon a_1|z_i|+ \epsilon \Lambda_1\zeta_1|z_i|^2+  \Lambda_1 \epsilon\langle z_i \sum_{i=1}^M  \lambda_i z_i   \rangle  + C.
\end{align*}
Since
\[|z_i|+|z_i|^2\leq 2|z_i|^2+1,\]
we find
\begin{align}
\epsilon_1\sum_{i=1}^M\langle p,z_i  \rangle    &\leq {-\Lambda_1\sum_{i=1}^M\lambda_i\sqrt{1+\epsilon|p|^2}}+\Lambda_1\epsilon\sum_{i=1}^M\alpha_i|z_i||p|  \notag\\&+\epsilon\zeta_1|q|^2+ \epsilon \Lambda_1\zeta_1\sum_{i=1}^M|z_i|^2+  \epsilon M\Lambda_1\sum_{i=1}^M \lambda_i|z_i|^2 + C.  \label{zerogammapzbound}
\end{align}
Applying Assumption \ref{UAssumptions}, and choosing $\Lambda_2\leq \frac{1}{8}\lambda_i$ for all $1\leq i\leq M$, we have
\begin{align}
\epsilon_2\mathcal{L}\langle q,p\rangle&=\epsilon_2\langle p,\nabla K(p)\rangle-\epsilon_2\langle q,\nabla U(q)\rangle +\epsilon_2\sum_{i=1}^M\lambda_i\langle q,z_i\rangle \notag\\
    &\leq \epsilon_2 \frac{|p|^2}{\sqrt{1+\epsilon|p|^2}}-{\epsilon_2} \zeta _1|q|^{\lambda+1}+\sum_{i=1}^M\lambda_i|q|^2+\epsilon_2^2\sum_{i=1}^M\lambda_i|z_i|^2+C \notag\\
    &\leq \frac{\Lambda_1}{8}\sum_{i=1}^M\lambda_i\sqrt{\epsilon}|p|-{\Lambda_1}\Lambda_2\epsilon \zeta_1|q|^{\lambda+1}+\epsilon^2\sum_{i=1}^M\lambda_i|z_i|^2+C.  \label{zerogammaqpbound}
\end{align}
Putting together \eqref{zerogammaLH2bound}, \eqref{zerogammapzbound}, and \eqref{zerogammaqpbound},
we obtain
\begin{align*}
    \mathcal{L}V_2\leq &\left(\epsilon d\sum_{i=1}^M2\alpha_i\beta^{-1} H_1 + \epsilon\sum_{i=1}^M\left(\Lambda_1M\zeta_1-\alpha_i\right)|z_i|^2-\frac{\epsilon}{4}\zeta_1|q|^{\lambda+1}{-\frac{1}{2}\Lambda_1\sum_{i=1}^M\lambda_i\sqrt{1+\epsilon|p|^2}} \right)\\
    &-\frac{1}{8}\Lambda_1\sum_{i=1}^M\lambda_i\sqrt{\epsilon}{|p|}+\sum_{i=1}^M\left(\epsilon\Lambda_1\alpha_i|z_i||p|-\frac{1}{4}\Lambda_1\lambda_i\sqrt{\epsilon}{|p|}-\epsilon^\frac{3}{2}\alpha_i|p||z_i|^2\right)\\
    &+\epsilon^2\sum_{i=1}^M\left( -\alpha_i|z_i|^4+ \lambda_i|z_i|^2+2\alpha_i\beta^{-1}|z_i|^2\right)+\left(\epsilon\zeta_2|q|^2-\epsilon{\Lambda_1}\Lambda_2 \zeta_1|q|^{\lambda+1}\right)+C.
\end{align*}
Choosing $\Lambda_1\leq \frac{\lambda_i}{\alpha_i}$ for all $1\leq i\leq M$, we can estimate 
\[ \epsilon\Lambda_1\alpha_i|z_i|-\frac{1}{4}\Lambda_1\lambda_i\sqrt{\epsilon}-\epsilon^{\frac{3}{2}}\alpha_i|z_i|^2\leq0.\]
Thus, by applying Assumption \ref{UAssumptions}, proceeding similarly to the proof of Proposition \ref{lyaponovconditiongammageq0} and choosing $\Lambda_1$ sufficiently small, and then we find that
\begin{align*}
\mathcal{L}V_2\leq-\zeta H_1^2+C.
\end{align*}
By choosing $\Lambda_1\leq \frac{1}{2}$, we can choose $\kappa>0$ such that
\begin{equation}
\label{V2bound}
    \zeta_1 H_1^2-C\leq V_1\leq C_1H_1^2+C.
\end{equation}
By combining the above, we find
\[\mathcal{L}V_2\leq-\zeta_1\sqrt{V_2}+C,\]
which proves \eqref{lyapunovcondition:gamma=0} for the case $n=1$.

Turning to the case $n>1$, note that,
\[\CL V^n=nV^{n-1}\mathcal{L}V+n(n-1)V^{n-2}\Big( \sum_{i=1}^M \alpha_i\beta^{-1}\left|2H_1\epsilon z_i +\epsilon p\right|^2 \Big).\]
The rest of the proof follows closely the proof of Proposition \ref{lyaponovconditiongammageq0} when $n> 1$, and thus is omitted.
\end{proof}

\subsection{Hörmander's and Solvability Conditions}
\label{Polynomial Ergodicity}

We now prove Hörmander's condition and the solvability condition of the GRLE, in order to show the GRLE satisfies the assumptions of Theorem \ref{hairerbound}.

To prove Hörmander's condition, we define the family of vector fields,
\begin{subequations}
\label{vectorfields}
    \begin{align}
     X_0&= \langle\nabla_q ,\nabla K (p)\rangle+\langle \nabla_p,-\nabla U(q)-\gamma \nabla K (p)+\sum_{j=1}^m\lambda_jz_j \rangle+ \sum_{j=1}^m\langle  \nabla_{z_j}, -\lambda_j\nabla K (p)-\alpha_jz_j  \rangle,\\
     X_{q_i}&=0,\\
     X_{p_i}&=\sqrt{2\beta^{-1}\gamma}\frac{\delta}{\delta {p_i}},\\
     X_{z_{ki}}&=\sqrt{2\beta^{-1}\alpha_k}\frac{\delta}{\delta z_{ki}}.
 \end{align}
 \end{subequations}
Define the family of Lie algebras recursively as
\begin{align*}
    \mathcal{A}_0&=\Lie\left(X_{q_i},X_{p_i},X_{z_{ki}}\right)_{i=1,...,d\text{, }k=1,...,m},\\
    \mathcal{A}_n&=\Lie\left([X_0,X]\right)_{X\in \mathcal{A}_{n-1}},
\end{align*}
for $n\in\IN$, where the Lie brackets denote
\[[X,Y]=X(Y)-Y(X).\]
Further we define
\[\mathcal{A}=\Lie(\mathcal{A}_n)_{n\in\IN_0}.\]
Then the following Proposition holds:

\begin{proposition}
\label{Hörmanderscondition}
The family of vector fields \eqref{vectorfields} satisfies Hörmander's condition: for all $x\in\IR^{(2+M)d}$, the vector space,
\[\text{span}\left\{X(x):X\in\mathcal{A} \right\}=\IR^{(2+M)d}.\]

\end{proposition}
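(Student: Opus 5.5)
We argue by direct computation of Lie brackets, working with constant coefficient vector fields so that brackets with $X_0$ reduce to directional derivatives of the drift. The $z$-directions come for free: the fields $X_{z_{ki}}=\sqrt{2\beta^{-1}\alpha_k}\,\partial_{z_{ki}}$ lie in $\mathcal{A}_0$, and since each $\alpha_k>0$ they span all $\partial_{z_{ki}}$, $k=1,\dots,M$, $i=1,\dots,d$, at every point. We will \emph{not} rely on $X_{p_i}$, since $\gamma$ may vanish (case (ii) of Theorem \ref{polynomialerogicity}). The argument then works for every $\gamma\ge 0$.

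\textbf{Recovering the $p$-directions.} For a constant field $Y=\partial_{z_{ki}}$ we have $[Y,X_0]=Y(b)$, where $b$ is the drift of $X_0$. The $z_k$-component of the drift appears in the $p$-equation as $\lambda_k z_k$ and in the $z_k$-equation as $-\alpha_k z_k$. Hence
\[
[\partial_{z_{ki}},X_0]=\lambda_k\,\partial_{p_i}-\alpha_k\,\partial_{z_{ki}}.
\]
This bracket lies in $\mathcal{A}_1$, and adding a multiple of $\partial_{z_{ki}}\in\mathcal{A}_0$ yields $\lambda_k\partial_{p_i}$. Since $\lambda_k>0$, every $\partial_{p_i}$, $i=1,\dots,d$, lies in $\mathcal{A}$.

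\textbf{Recovering the $q$-directions.} This is the only step where the relativistic nonlinearity enters, and it is the main obstacle. We compute
\[
[\partial_{p_i},X_0]=\sum_{j}\partial_{p_i}\partial_{p_j}K(p)\,\Big(\partial_{q_j}-\gamma\,\partial_{p_j}-\sum_{k}\lambda_k\partial_{z_{kj}}\Big).
\]
The $p$- and $z$-components already lie in the span obtained above. Modulo that span, the vector fields $[\partial_{p_i},X_0]$ for $i=1,\dots,d$ therefore contribute the vectors $\nabla^2K(p)\,e_i$ in the $q$-block. The point to check is that $\nabla^2K(p)$ is invertible for every $p$. From the formulas in the proof of Lemma \ref{relativistickineticenergylipschitzcontinuous} (with $m=1$),
\[
\nabla^2 K(p)=\frac{c}{(c^2+|p|^2)^{1/2}}\Big(I-\frac{pp^T}{c^2+|p|^2}\Big).
\]
Its eigenvalues are $c(c^2+|p|^2)^{-1/2}>0$, with multiplicity $d-1$, and $c^3(c^2+|p|^2)^{-3/2}>0$ in the direction $p$. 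So $\nabla^2K(p)$ is positive definite everywhere, and the $q$-block is spanned at every point.

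\textbf{Conclusion.} Combining the three families, at each $x$ the vectors $\{\partial_{z_{ki}}\}$, $\{\partial_{p_i}\}$ and $\{[\partial_{p_i},X_0](x)\}$ span $\IR^{(2+M)d}$. This gives Proposition \ref{Hörmanderscondition}. The only subtlety we need to watch is that brackets with $X_0$ produce $x$-dependent fields. This is harmless: we only evaluate their span pointwise, and we only subtract constant multiples of constant fields already in $\mathcal{A}$.
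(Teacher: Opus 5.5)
Your proposal is correct and follows essentially the same route as the paper. You bracket $X_0$ with the noise fields $X_{z_{ki}}$ to obtain $\partial_{p_i}$ without using $X_{p_i}$, then bracket with $\partial_{p_i}$ to produce the $q$-block vectors $\nabla^2K(p)e_i$ modulo the $p$- and $z$-directions, and conclude pointwise. The only difference is that you prove invertibility of $\nabla^2K(p)$ by computing its eigenvalues explicitly, whereas the paper invokes the rank-one determinant lemma of Appendix~\ref{appendix2} and also computes the unneeded double bracket $[X_0,[X_0,X_{z_{ki}}]]$.
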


\begin{proof}
Direct computations give
\[  \frac{1}{\sqrt{2\beta^{-1}\alpha_k}}\left[X_0,X_{z_{ki}}\right]=-\lambda_k\frac{\delta}{\delta p_i}+\alpha_k\frac{\delta}{\delta z_{ki}}.\]
Thus, $\frac{\delta}{\delta p_i}\in\mathcal{A}$,  for any choice of $\gamma\geq 0$. Furthermore, we have
\begin{align*}
    \left[X_0, \frac{\delta}{\delta p_i} \right]=&-\left( \frac{(1+\epsilon|p|^2)\frac{\delta}{\delta q_i}-\epsilon p_i\sum_{j\leq d}p_j\frac{\delta}{\delta q_j}}{(1+\epsilon|p|^2)^\frac{3}{2}} \right)+\gamma\left( \frac{(1+\epsilon|p|^2)\frac{\delta}{\delta p_i}-\epsilon p_i\sum_{j\leq d}p_j\frac{\delta}{\delta p_j}}{(1+\epsilon|p|^2)^\frac{3}{2}} \right)\\+ &\sum_{l=1}^m\lambda_l\left( \frac{(1+\epsilon|p|^2)\frac{\delta}{\delta z_{li}}-\epsilon p_i\sum_{j\leq d}p_j\frac{\delta}{\delta z_lj}}{(1+\epsilon|p|^2)^\frac{3}{2}} \right).
\end{align*}
So that, for $i=1,..,d$, we have
\begin{align*}
    \frac{1}{\sqrt{2\beta^{-1}\alpha_k}}\left[X_0,\left[X_0,X_{z_{k_i}}\right] \right]=&\lambda_k\left( \frac{(1+\epsilon|p|^2)\frac{\delta}{\delta q_i}-\epsilon p_i\sum_{j\leq d}p_j\frac{\delta}{\delta q_j}}{(1+\epsilon|p|^2)^\frac{3}{2}} \right)-\gamma\lambda_k\left( \frac{(1+\epsilon|p|^2)\frac{\delta}{\delta p_i}-\epsilon p_i\sum_{j\leq d}p_j\frac{\delta}{\delta p_j}}{(1+\epsilon|p|^2)^\frac{3}{2}} \right)\\- &\lambda_k\sum_{l=1}^m\lambda_l\left( \frac{(1+\epsilon|p|^2)\frac{\delta}{\delta z_{li}}-\epsilon p_i\sum_{j\leq d}p_j\frac{\delta}{\delta z_lj}}{(1+\epsilon|p|^2)^\frac{3}{2}} \right)-\alpha_k\lambda_k\frac{\delta}{\delta p_i}+\alpha_k^2\frac{\delta}{\delta z_{ki}}.
\end{align*}
From this, it follows that
\begin{equation}
\label{c1}
\frac{(1+\epsilon|p|^2)\frac{\delta}{\delta q_i}-\epsilon p_i\sum_{j\leq d}p_j\frac{\delta}{\delta q_j}}{(1+\epsilon|p|^2)^\frac{3}{2}}\in\mathcal{A}.    
\end{equation}
For all $l\in1,...,d$, there exists $a\in\IR^d$ such that
\begin{equation}
\label{c2}
\sum_{i=1}^d a_i\left(\left(1+\epsilon|p|^2\right)\frac{\delta}{\delta q_i}-\epsilon p_i\sum_{j\leq d}p_j\frac{\delta}{\delta q_j} \right)=\frac{\delta}{\delta q_l}.    
\end{equation}
This statement follows from the fact that the matrix:

\[\bf{B}=\begin{pmatrix}
    -\frac{1}{\epsilon}-|p|^2+p_1^2 & p_2p_1 & \cdots &p_dp_1\\
    p_1p_2 & -\frac{1}{\epsilon}-|p|^2+p_2^2 & \cdots &p_dp_2\\
    \vdots & \vdots & \ddots & \vdots\\
    p_1p_d & p_2p_d & \cdots & -\frac{1}{\epsilon}-|p|^2+p_d^2,
\end{pmatrix}\]
satisfies the conditions of Lemma \ref{nonsingularmatrix}, and so is invertible. Thus we can choose $a$ as,
\[a=-\frac{1}{\epsilon}{\bf{B}^{-1}}e_l.\]

We deduce from \eqref{c1} and \eqref{c2} that $\frac{\delta}{\delta q_l}\in \mathcal{A}$, which completes the proof.
\end{proof}

The solvability condition is stated as follows:
\begin{proposition}
\label{controlproblemstatement}
    The control problem associated with \eqref{GRLEpqz} is solvable: for all $(q_0,p_0,z_{1,0},...,z_{M,0})\in\IR^{2+M}$, there exists a smooth collection of paths $U,U_1,...,U_k$, which satisfy the ordinary differential equation
\begin{subequations}
\label{controlproblem}
 \begin{align}
 \df q(t)&= \nabla K (p(t))\,\df t,\\
\df p(t)&=-\nabla U(q(t))\,\df t-\gamma \nabla K (p(t))\,\df t+\sum_{i=1}^M \lambda_i z_i(t)\,\df t+\sqrt{2\gamma\beta^{-1}}\, \df U(t),\\
\df z_i(t)&=-\lambda_i\ \nabla K (p(t))\,\df t-\alpha_i z_i(t)\,\df t+\sqrt{2\alpha_i\beta^{-1}}\,\df U_i(t),
\end{align}   
\end{subequations}
subject to boundary conditions
\begin{subequations}
\label{initialconditionsforcontrolproblem}
    \begin{align}
    (q(0),p(0),z_1(0),...,z_M(0))&=(q_0,p_0,z_{1,0},...,z_{M,0}),\\
    (q(T),p(T),...,z_1(T),...,z_M(T))&=\bf{0},
\end{align}
\end{subequations}
for some $T\geq 0$.
\end{proposition}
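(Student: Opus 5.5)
We construct the control explicitly by prescribing the $(q,p)$ trajectory and solving backwards for everything else. The key point is that the controls $U_i$ enter the $z_i$ equations non-degenerately, since $\alpha_i>0$. We also note that $z$ enters the $p$ equation through $\sum_i\lambda_i z_i$ with $\lambda_i>0$. This means that even when $\gamma=0$ we can steer $p$ through $z$, so we do not need the control $U$ at all and simply set $U\equiv 0$. This covers both cases $\gamma>0$ and $\gamma=0$ uniformly.

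\textbf{Step 1: choose the position path.} Fix $T=1$ and pick a smooth path $q:[0,1]\to\IR^d$ satisfying
\[
q(0)=q_0,\quad \dot q(0)=\nabla K(p_0),\quad \ddot q(0)=a_0,\quad q(1)=0,\quad \dot q(1)=0,\quad \ddot q(1)=0,
\]
where $a_0$ is determined below. We also require $|\dot q(t)|<c$ for all $t$. This is possible because $\nabla K(p)=cp/\sqrt{m^2c^2+|p|^2}$ is a diffeomorphism of $\IR^d$ onto the open ball of radius $c$, with inverse
\[
p=(\nabla K)^{-1}(v)=\frac{mv}{\sqrt{1-|v|^2/c^2}}.
\]
One concrete construction: take a polynomial (in $t$) interpolant of the boundary data, then rescale it near the middle. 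For example, use $q(t)=q_0+\phi(t)(\ldots)$ with a slow transport over a long time $T$, so that the speed stays below $c$. If this is inconvenient, we instead take $T$ large: a path traversing distance $|q_0|$ in time $T$ has speed $O(|q_0|/T)<c$. The boundary conditions at $t=0$ are then matched by a short initial smooth segment that stays inside the ball, since $|\nabla K(p_0)|<c$.

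\textbf{Step 2: recover $p$.} Define $p(t)=(\nabla K)^{-1}(\dot q(t))$. This is smooth, and it satisfies $p(0)=p_0$ and $p(T)=0$. It also gives $\dot p(T)=0$, because $\ddot q(T)=0$ and $(\nabla K)^{-1}$ is a diffeomorphism.

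\textbf{Step 3: choose $z$ so the $p$ equation holds with $U\equiv0$.} We need
\[
\sum_i\lambda_i z_i(t)=g(t):=\dot p+\nabla U(q)+\gamma\nabla K(p).
\]
Note that $g(T)=0$ since $\nabla U(0)$... but that need not vanish. So we do not insist on $z(T)=0$ through this equation alone. Instead we choose the terminal state to be "$q=0$, $p=0$", and handle $z$ by a final phase, as follows.

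Alternatively, and more cleanly, we split $z_1$ off and pick $z_2,\dots,z_M$ to be arbitrary smooth paths from $z_{i,0}$ to $0$. We then set
\[
z_1=\lambda_1^{-1}\Bigl(g-\sum_{i\ge2}\lambda_i z_i\Bigr).
\]
The boundary conditions require:
\begin{itemize}
\item at $t=0$, $g(0)=\sum_i\lambda_i z_{i,0}$;
\item at $t=T$, $g(T)=0$.
\end{itemize}
Since $\dot p(0)=(D(\nabla K)^{-1})\ddot q(0)$, the first condition is met by choosing $a_0$ in Step 1 appropriately. The second condition says $\dot p(T)=-\nabla U(0)$, so we should replace $\ddot q(T)=0$ by the value $\ddot q(T)$ corresponding to $\dot p(T)=-\nabla U(0)$. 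Adding these second-order boundary conditions to the interpolation in Step 1 is harmless.

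\textbf{Step 4: define the controls.} With $q,p,z$ now smooth, define
\[
U_i(t)=\int_0^t\frac{\dot z_i+\lambda_i\nabla K(p)+\alpha_i z_i}{\sqrt{2\alpha_i\beta^{-1}}}\,\df s,
\]
which is legitimate because $\alpha_i>0$. Then \eqref{controlproblem} holds with the boundary conditions \eqref{initialconditionsforcontrolproblem}.

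\textbf{Main obstacle.} The only delicate point is Step 1: constructing a smooth $q$ that meets the prescribed boundary values up to second order while keeping $|\dot q|<c$ throughout. We handle this by taking $T$ large and gluing short boundary-layer segments, inside which the speed stays near $|\nabla K(p_0)|<c$ and near $0$ respectively.
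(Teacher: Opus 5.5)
Your argument is correct, and it is a unified and somewhat cleaner version of the paper's proof.

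\textbf{How the paper proceeds.} The paper splits on $\gamma$. For $\gamma>0$ it uses the control $U$ acting directly on $p$. Then $q$ needs only first-order boundary matching (taken from Lemma 3.7 of \cite{duong_trend_2024}), and the $z_i$ are arbitrary smooth interpolations to $0$. Only for $\gamma=0$ does it do what you do:
\begin{itemize}
\item prescribe $q$ with second-order data, namely $\dot q(T)=0$, $\ddot q(T)=-\nabla U(0)$, and $\ddot q(0)$ fixed by the initial compatibility relation;
\item recover $p$ from $\dot q$ and $\sum_i\lambda_iz_i$ from the $p$-equation;
\item integrate to get $U_k$.
\end{itemize}

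\textbf{What your route gains.} Setting $U\equiv0$ for every $\gamma\ge0$ shows that controllability comes entirely from the $z$-noise through the coupling $\lambda_i>0$. This matches the H\"ormander computation, where $\partial_{p_i}$ is obtained from $[X_0,X_{z_{ki}}]$ whatever $\gamma$ is. There are two further gains.
\begin{itemize}
\item Solving for $\ddot q(0)$ only needs invertibility of $D(\nabla K)^{-1}$ at $\nabla K(p_0)$. This is immediate because $\nabla K$ is a diffeomorphism onto the ball of radius $c$. Explicitly, $a_0=\nabla^2K(p_0)\bigl(\sum_i\lambda_iz_{i,0}-\nabla U(q_0)-\gamma\nabla K(p_0)\bigr)$, so Lemma \ref{nonsingularmatrix} is unnecessary.
\item Your split $z_1=\lambda_1^{-1}\bigl(g-\sum_{i\ge2}\lambda_iz_i\bigr)$ makes explicit how the individual $z_i$ are recovered from the single relation $\sum_i\lambda_iz_i=g$. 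The paper leaves this implicit.
\end{itemize}
The paper's Case 1 avoids second-order matching when $\gamma>0$. Yours gives one construction covering all $\gamma\ge0$.

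\textbf{Cleaning up the write-up.} Commit to the corrected boundary data from the start.
\begin{itemize}
\item Drop ``Fix $T=1$'': if $|q_0|\ge c$, no path with speed below $c$ reaches $0$ in unit time.
\item Drop the claims $\ddot q(T)=0$ and $\dot p(T)=0$. Impose instead $\ddot q(T)=b:=-\nabla U(0)/m$, since $D(\nabla K)^{-1}(0)$ is $m$ times the identity.
\end{itemize}
A concrete Step 1 is
\[
\dot q(t)=\chi_0(t)\bigl(\nabla K(p_0)+a_0t\bigr)+\chi_1(t)\,b\,(t-T)+\psi(t)\,w.
\]
Here $\chi_0,\chi_1\in[0,1]$ are cutoffs equal to $1$ near $0$ and near $T$, supported in $[0,\delta)$ and $(T-\delta,T]$ respectively. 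The bump $0\le\psi\le1$ is supported in $(\delta,T-\delta)$ and equals $1$ on most of that interval. The constant $w$ is chosen so that $\int_0^T\dot q\,\df t=-q_0$. Because the supports are disjoint, $|\dot q|<c$ throughout once $\delta$ is small and $T$ is large.
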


\begin{proof}
\textbf{Case 1 ($\gamma>0$):} The result follows from Lemma 3.7 from \cite{duong_trend_2024}. Indeed, we can find $q\in \text{C}^{\infty}(\IR)$ such that the path of $p$, given by
\begin{equation}
    p(s)=\frac{q'(s)}{\sqrt{1-\epsilon|q'(s)|^2}},\quad 0\leq s\leq T, \label{rearrangingtogetp}
\end{equation}
is well defined, that is
\begin{align}
\label{boundednessoffirstderivative}
    |q'(s)|<{\frac{1}{\sqrt{\epsilon}}},\quad 0\leq s\leq T,
\end{align}
and that $q$ and $p$ satisfy initial conditions. We then define $z_k\in \text{C}^{\infty}(\IR)$ by

\[z_k(s)=\begin{cases}
    z_{k,0}, &0\leq s< \rho,\\
    \text{monotonicity,} &\rho\leq s\leq T-\rho,\\
    0, &T-\rho <s\leq T,
\end{cases}\]
for some $\rho>0$. We then define
\[U(t)=\frac{1}{\sqrt{2\gamma\beta^{-1}}}\int_0^t\frac{\df p(s)}{\df s}+\nabla U(q(s)) + \gamma\nabla K (p(s))- \sum_{i=1}^M\lambda_iz_i(s)\, \df s, \]
and for $i=1,\ldots, M$
\begin{align}
\label{Uk}
    U_i(t)=\frac{1}{\sqrt{2\alpha_i\beta^{-1}}}\int_0^t\frac{\df z_i(s)}{\df s}+\lambda_i\nabla K (p(s))+\alpha_iz_i(s)\, \df s.
\end{align}

\textbf{Case 2 ($\gamma=0$):} The control problem reduces to:
\begin{subequations}
\label{controlproblemzerogamma}
 \begin{align}
 \df q(t)&= \nabla K (p(t))\,\df t, \label{controlproblemqgamma0}\\
\df p(t)&=-\nabla U(q(t))\,\df t+\sum_{i=1}^M \lambda_i z_i(t)\,\df t,  \label{controlproblempgamma0}\\
\df z_k(t)&=-\lambda_k\ \nabla K (p(t))\,\df t-\alpha_k z_k(t)\,\df t+\sqrt{2\alpha_k\beta^{-1}}\,\df U_k(t).
\end{align}   
\end{subequations}
It suffices to choose a path for $q$, $p$ and $z$ which satisfies \eqref{controlproblemqgamma0} and \eqref{controlproblempgamma0}.\\
From \eqref{controlproblemqgamma0} and \eqref{controlproblempgamma0}, we have
\begin{subequations}
\label{controlproblemz}
    \begin{align}
\sum_{i=1}^M\lambda_iz_i(s)&=p'(s)+\nabla U(q(s)),\\
p'(s)&=\frac{q''(s)\left(1-\epsilon|q'(s)|^2\right)+\epsilon q'(s)\left\langle q'(s),q''(s)\right\rangle}{\left(1-\epsilon|q'(s)|^2\right)^{\frac{3}{2}}},
\end{align}
\end{subequations}
for $0\leq s\leq T$. Thus, we must satisfy the conditions
\begin{subequations}
\label{conditionsforcontrolproblem}
    \begin{align}
-\nabla U(0)&=q''(T),\\
q'(T)&=0.\\
q'(0)&=\nabla K (p(0)).
\end{align}
\end{subequations}
For simplicity of notation, let $k_p:=K (p(0))$. In order for $p$ to be well defined, we must again satisfy \eqref{boundednessoffirstderivative}.\\
There exists $q''_0\in\IR^d$ such that $q''(0)=q''_0$ and satisfies \eqref{controlproblemz}. Indeed, $q''_0$ satisfies the simultaneous equation
\begin{align}
\label{simulaneousequation}
\sum_{i=1}^M\lambda_iz_{i,0}=\frac{q''_0(1-\epsilon|k_{p}|^2)+\epsilon k_{p}  \left\langle k_{p},q''_0\right\rangle}{\left(1-\epsilon|k_{p}|^2\right)^\frac{3}{2}}+\nabla U(q_0).
\end{align}
In matrix form, \eqref{simulaneousequation} is given by

\[{\bf{B}}q''(0)=\frac{1}{\epsilon}{\left(1-\epsilon|k_{p,0}|^2\right)^\frac{3}{2}}\left(\sum^{m}_{i=1}\lambda_iz_{i,0}-\nabla U(q_0)\right),\]
where $\bf{B}$ is given by,

\[{\bf{B}}=\begin{pmatrix}
    \frac{1}{\epsilon}-|k_p|^2+k_{p_{1}}^2 & k_{p_1}k_{p_2} &\cdots &k_{p_1}k_{p_d}\\
    k_{p_2}k_{p_1} & \frac{1}{\epsilon}-|k_p|^2 +k_{p_2}^2 &\cdots &k_{p_2}k_{p_d}\\
    \vdots & \vdots & \ddots & \vdots \\
    k_{p_n}k_{p_1} & k_{p_n}k_{p_2} & \cdots &  \frac{1}{\epsilon}-|k_p|^2+k_{p_{d}}^2
\end{pmatrix}.\]
By Lemma \ref{nonsingularmatrix}, $\bf{B}$ is invertible, thus we can find initial condition $q''(0)$ for the control problem.\\
Next we seek $Q\in C^{\infty}(\IR)$ so that
\[q(t)=q_0+\int_0^tQ(s)\,\df s.\]
We can construct $Q$ as follows: let $\psi$ be a smooth step function. By appropriately translating and scaling $\psi$, there exist $\psi_0,\psi_1\in C^{\infty}(\IR)$ and $t_1,t_3>0$ such that, for all $n\in\IN$,
\[\psi_0(0)=k_p,\quad \psi_0(t_1)=a,\quad \psi_0'(0)=q''_0,\quad \psi_0^{(n)}(t_1)=0,\]
where $0<|a|\leq\frac{1}{\sqrt{\epsilon}}$, and,
\[\psi_1(0)=\text{sgn}(\nabla U(0))|a|,\quad \psi_1(t_3)=0,\quad \psi_1^{(n)}(0)=0,\quad \psi_1'(t_3)=-\nabla U(0).\]
For any $A\in\IR$, and $a_1,a_2\in\IR$, there exist $g\in C^{\infty}(\IR)$ and $t_2>0$ such that, for all $n\in\IN$,

\[g(0)=a_1,\quad g(t_2)=a_2, \quad g^{(n)}(0)=g^{(n)}(t_2)=0,\quad \int_0^{t_2}g(s)\df s=A.\]
Let $T_1=t_1$, $T_2=t_1+t_2$ and $T=t_1+t_2+t_3$. Choosing $a_1=\psi_0(0)$, $a_2=\psi_1(0)$ and $A=-q_0-\int_0^{t_1}\psi_0(s)\,\df s-\int_0^{t_3}\psi_1(s)\,\df s$, we choose,

\[Q(t)=\begin{cases}
    \psi_0(t), &t\in[0,T_1],\\
    g(t-T_1), &t\in(T_1,T_2],\\
    \psi_1(t-T_2), &t\in(T_2,T].
\end{cases}\]
By our choice of $Q$, we have found $q$ which satisfies \eqref{initialconditionsforcontrolproblem}, \eqref{boundednessoffirstderivative}, and \eqref{conditionsforcontrolproblem}, as well as $q''(0)=q''_0$. $p$ and $z$ can then be chosen according to \eqref{rearrangingtogetp} and \eqref{controlproblemz} respectively. By choosing $U_k$ as defined in \eqref{Uk}, we have found a solution to the control problem.
\end{proof}

\subsection{Proof of Theorem \ref{polynomialerogicity}} Having established the crucial properties in the previous subsections, we are now in a position to conclude Theorem \ref{polynomialerogicity}, by verifying the hypothesis of Theorem \ref{hairerbound}.
\begin{proof}[Proof of Theorem \ref{polynomialerogicity}]
    By Propositions \ref{lyaponovconditiongammageq0} and \ref{lyaponovconditiongammaeq0}, there exists $\zeta>0$ such that

    \[\CL V(x)^n\leq-\zeta V(x)^{n-\frac{1}{2}}.\] 
      It follows that, for all $n\in\IN$, for sufficiently small $\epsilon>0$, $V^n$ is a $\phi$- Lyapunov function, where $\phi(t)=\zeta t^{1-\frac{1}{2n}}$. From Theorem \ref{hairerbound}, as well as Propositions \ref{Hörmanderscondition} and \ref{controlproblemstatement}, it follows that,

    \[\|P_t(x,\cdot)-\rho_\beta\|_{\text{TV}}\leq \frac{\zeta_0 V(x)^n}{\zeta\left(\frac{t}{2n\zeta}+1\right)^{2n-1}}.\]
The result follows by choosing $r=2n+1$.    
\end{proof}

\section{White Noise and Newtonian Limits}
In this section, we prove Theorem \ref{relativisticwhitenoiselimit} and Theorem \ref{relativisticnewtonianlimit} on the white-noise limit and Newtonian limit of the GRLE system \eqref{GRLEpqz}, respectively. This will be completed in three steps. We first show that the GRLE has bounded moments. We then prove the limits given that $\nabla U$ is Lipschitz continuous. Finally, we combine these results to show that this holds generally for any $U\in C^{\infty}(\IR)$. This method of proof is adapted from \cite{duong_trend_2024, nguyen2018small}.

\label{White Noise and Newtonian Limits}
\subsection{Moments bound}
We first show the boundedness of the moments of the GRLE systems.
\begin{proposition}
\label{boundedprocessesgen}

Let $(q,p,z)$, $(q^{(c)},p^{(c)})$ and $(q^{(\varepsilon)},p^{(\varepsilon)},z^{(\varepsilon)})$ be solutions of \eqref{GRLEpqz}, \eqref{RLE}, and \eqref{GRLEpqzrescaled} respectively. Then under Assumptions \ref{UAssumptions} and \ref{boundedinitialconditionsrel}, for all $n\geq1$ and $T>0$, the following estimates hold
 \begin{equation}
 \label{GLEExpectationbound}
\IE\left[\sup_{t\in[0,T]}\left|p(t)\right|^n+\sup_{t\in[0,T]}\left|q(t)\right|^n+\sum_{i=1}^M\sup_{t\in[0,T]}\left|z_i(t)\right|^n\right]<C,     
 \end{equation}

  \begin{equation}
 \label{RLEExpectationbound}
\IE\left[\sup_{t\in[0,T]}\left|p^{(c)}(t)\right|^n+\sup_{t\in[0,T]}\left|q^{(c)}(t)\right|^n\right]<C;     
 \end{equation}
 and,
 \begin{equation}
  \label{GLREExpectationbound}
     \IE\left[\sup_{t\in[0,T]}\left|p^{(\varepsilon)}(t)\right|^n+\sup_{t\in[0,T]}\left|q^{(\varepsilon)}(t)\right|^n+\sum_{i=1}^M\sup_{t\in[0,T]}\left|z^{(\varepsilon)}_i(t)\right|^n\right] <C,
 \end{equation}
for some constant $C=C(T,n)>0$ independent of $\varepsilon\in(0,1]$ and $\epsilon=1/c^2\in(0,1]$.
\end{proposition}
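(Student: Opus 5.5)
The plan is to treat the three systems separately, but with the same two ingredients. The first is an Itô/energy estimate on a power of the Hamiltonian $H=U+K+\tfrac12|z|^2$ from \eqref{Hamiltonian}. The second is the Burkholder--Davis--Gundy (BDG) inequality, which moves the supremum inside the expectation. For \eqref{GLEpqz} and \eqref{RLE} (the latter has no $z$ variable) this is routine. For the rescaled system \eqref{GRLEpqzrescaled} I would exploit the explicit Ornstein--Uhlenbeck structure of $z^{(\varepsilon)}$, as announced after Theorem \ref{relativisticwhitenoiselimit}. Throughout, constants must be independent of $\epsilon=1/c^2\in(0,1]$ and $\varepsilon\in(0,1]$. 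Since $K(p)\ge c\,|p|$ and $U\ge \frac1{a_3}|q|^{\lambda+1}-a_3$ (Assumption \ref{UAssumptions}), a bound on $\IE\sup_t H^{n}$ controls all the moments in the statement.

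\textbf{Step 1 (unscaled system and RLE).} I would apply Itô's formula to $H^{n}$ along \eqref{GRLEpqz}. By the computation in the proof of Theorem \ref{existenceuniquenessmarkovtheorem}, $\CL H\le -\gamma|\nabla K|^2-\sum_i\alpha_i|z_i|^2+C$. Here $C=\gamma\beta^{-1}\Delta K+d\beta^{-1}\sum\alpha_i$ is uniform in $c\ge1$, because $\Delta K\le d/m$ by Lemma \ref{relativistickineticenergylipschitzcontinuous}. Hence $\CL H^n\le C H^{n}+C$, since the second-order Itô terms are bounded by $H^{n-1}(|\nabla K|^2+|z|^2)\le CH^n$. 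The martingale part is $\int nH^{n-1}(\sqrt{2\gamma\beta^{-1}}\nabla K\cdot dW+\sum\sqrt{2\alpha_i\beta^{-1}}z_i\cdot dW_i)$. I would bound it with BDG and Young's inequality:
\[
\IE\sup_{s\le t}|M_s|\le C\,\IE\Big(\int_0^t H^{2n-2}(|\nabla K|^2+|z|^2)\,ds\Big)^{1/2}\le \tfrac12\IE\sup_{s\le t}H^n+C\int_0^t\IE H^n\,ds .
\]
This uses $|\nabla K|\le c$, which is not uniform in $c$. I would instead use $|\nabla K|^2\le 2K/m$, which is uniform, together with $H\ge1$. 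Gronwall's lemma then gives $\IE\sup_{[0,T]}H^n\le C(T,n)(1+\IE H(0)^n)$, which is finite by Assumption \ref{boundedinitialconditionsrel}. This proves \eqref{GLEExpectationbound}, and the same argument with $z\equiv0$ proves \eqref{RLEExpectationbound}.

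\textbf{Step 2 (rescaled system, $(q,p)$ part).} Applying Step 1 directly to \eqref{GRLEpqzrescaled} fails. The Itô correction $d\beta^{-1}\sum\alpha_i/\varepsilon$ and the dissipation $-\sum(\alpha_i/\varepsilon)|z_i|^2$ balance only in mean, and BDG produces a factor $\varepsilon^{-1/2}$. I would therefore use Duhamel's formula, as in \eqref{markappzito}:
\[
z_i^{(\varepsilon)}(t)=e^{-\alpha_i t/\varepsilon}z_{i,0}+\sqrt{\tfrac{2\alpha_i}{\varepsilon}}\int_0^te^{-\alpha_i(t-s)/\varepsilon}dW_i(s)-\tfrac{\lambda_i}{\sqrt\varepsilon}\int_0^te^{-\alpha_i(t-s)/\varepsilon}\nabla K(p(s))\,ds .
\]
I then substitute this into \eqref{prescaled}. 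Integrating the forcing $\varepsilon^{-1/2}\lambda_i z_i\,dt$ in time, the stochastic convolution becomes, after a stochastic Fubini argument, $\frac{\sqrt{2}\lambda_i}{\sqrt{\alpha_i}}\int_0^t(1-e^{-\alpha_i(t-s)/\varepsilon})\,dW_i(s)$. Its supremum moments are bounded uniformly in $\varepsilon$ by BDG applied to $W_i(t)$ plus the OU term $\sqrt{\varepsilon}\,\times(\text{unit-rate OU})$. The memory drift integrates to $-\frac{\lambda_i^2}{\alpha_i}\int_0^t(1-e^{-\alpha_i(t-s)/\varepsilon})\nabla K(p(s))\,ds$, which is a bounded-kernel linear term. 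The initial-datum term contributes $\sqrt\varepsilon|z_{i,0}|/\alpha_i$. This yields a closed integral inequality for $\sup_{s\le t}(|p|+|q|)$. The Lipschitz bound from Lemma \ref{relativistickineticenergylipschitzcontinuous} and the growth of $\nabla U$ are handled by combining this with the Itô bound on $U(q)+K(p)$. In that bound only $p$ is forced by $z$, and I rewrite this forcing through the integrated form above rather than through $z$ itself. Gronwall then gives the $\varepsilon$-uniform $(q^{(\varepsilon)},p^{(\varepsilon)})$ part of \eqref{GLREExpectationbound}.

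\textbf{Step 3 (supremum of $z^{(\varepsilon)}$ — the main obstacle).} The deterministic pieces of the Duhamel formula are uniformly bounded. The initial-datum term is bounded by $|z_{i,0}|$. The memory term is bounded by $\frac{|\lambda_i|}{\sqrt\varepsilon}\cdot\frac{\varepsilon}{\alpha_i}\sup|\nabla K|$, which is $O(\sqrt\varepsilon)$ times a quantity with bounded moments from Step 2. The hard part is the stochastic convolution $Z^\varepsilon(t)=\sqrt{2\alpha_i/\varepsilon}\int_0^te^{-\alpha_i(t-s)/\varepsilon}dW_i(s)$. It is a time-changed stationary unit-rate OU process on the long interval $[0,T/\varepsilon]$, and for it $\IE\sup$ is only known to be of order $\sqrt{\log(1/\varepsilon)}$. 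I would first try to control it by the factorization method, or by Kolmogorov/Garsia--Rodemich--Rumsey on dyadic blocks of length $\varepsilon$, aiming for a bound independent of $\varepsilon$. If that fails, I would check whether the downstream arguments (Theorems \ref{relativisticwhitenoiselimit} and \ref{relativisticnewtonianlimit}) actually need only $\varepsilon^{a}\,\IE\sup|z^{(\varepsilon)}|^n\to0$ for some $a>0$. That weaker statement follows immediately from the logarithmic bound, and if it suffices I would state the $z^{(\varepsilon)}$-part of \eqref{GLREExpectationbound} in that form.
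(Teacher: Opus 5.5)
Your worry in Step 3 is justified, and the situation is worse than you state. The $z^{(\varepsilon)}$ part of \eqref{GLREExpectationbound} is false, and $\sqrt{\log(1/\varepsilon)}$ is the true order of $\IE\sup_{t\le T}|z_i^{(\varepsilon)}(t)|$, not just the best known bound. So no factorization or Garsia--Rodemich--Rumsey argument can give an $\varepsilon$-uniform bound, and you should not try.

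Indeed, $Z_i^{\varepsilon}(t)=\sqrt{2\alpha_i/\varepsilon}\int_0^t e^{-\alpha_i(t-s)/\varepsilon}\,\df W_i(s)$ equals $Y(t/\varepsilon)$, where $\df Y=-\alpha_i Y\,\df u+\sqrt{2\alpha_i}\,\df B$ is driven by $B(r)=\varepsilon^{-1/2}W_i(\varepsilon r)$. The samples $Y(kL)$ with $k\ge 1$ and $kL\le T/\varepsilon$ have pairwise $L^2$-distances bounded below once $L$ is large. Sudakov's minoration then gives $\IE\sup_{t\le T}|Z_i^{\varepsilon}(t)|\ge c_0\sqrt{\log(1/\varepsilon)}$ for small $\varepsilon$. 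The other Duhamel terms are bounded by $|z_{i,0}|$ and, since $|\nabla K|\le c$, by $|\lambda_i|c\sqrt{\varepsilon}/\alpha_i$. Hence $\IE\sup_{t\le T}|z_i^{(\varepsilon)}(t)|^n\to\infty$.

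The paper obtains this part in one line ``from \eqref{zito}'', which tacitly treats the supremum of the stochastic convolution as bounded; that step is wrong. Your fallback is the right repair. A union bound over the $O(1/\varepsilon)$ blocks of length $\varepsilon$, each with uniform Gaussian tails, gives $\IE\sup_{t\le T}|z_i^{(\varepsilon)}(t)|^n\le C(1+\log(1/\varepsilon))^{n/2}$. The only downstream use, in Proposition \ref{relwhitenoiselimitlipschitz}, carries a prefactor $\varepsilon^{n/2}$. So Theorem \ref{relativisticwhitenoiselimit} survives, with rate $(\varepsilon(1+\log(1/\varepsilon)))^{n/2}$.

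There are two further gaps.

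(i) Step 1 is not uniform in $c$, which the statement and the Newtonian limit require. Since $K\ge mc^2$, you have $\IE H(0)^n\ge (mc^2)^n$, and together with $|p|\le K/c$ your Gronwall bound only yields $\IE\sup_{t\le T}|p(t)|^n\lesssim c^n$. Subtract the rest energy. For $\tilde H=U+K-mc^2+\frac12|z|^2$ the generator computations are unchanged, and $\tilde H(0)\le U(q_0)+\frac{|p_0|^2}{2m}+\frac12|z_0|^2$. Moreover $|\nabla K|^2\le\frac2m(K-mc^2)$ and $K-mc^2\ge\frac13\min\{|p|^2/m,\,c|p|\}$. This is exactly what the paper's $\Gamma_1=\frac{\epsilon}{2}(U+K)^2-\frac{1}{2\epsilon}$ (with $m=1$) achieves.

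(ii) Step 2 asserts, rather than proves, the $(q,p)$ bound when $\nabla U$ is superlinear. The Duhamel inequality closes only when $\nabla U$ grows linearly; in that case it is a pathwise Gronwall argument, using $|\int_0^t e^{-a(t-s)}\,\df W(s)|\le 2\sup_{[0,T]}|W|$.

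In the energy identity for $U+K$, the initial-datum and memory parts of the forcing $\varepsilon^{-1/2}\lambda_i\langle\nabla K(p),z_i\rangle$ are harmless. The memory part is even dissipative, because $e^{-a|t-s|}$ is a positive-definite kernel. The free OU part, however, is not harmless, and rewriting it ``through the integrated form'' is circular: the time derivative of $\varepsilon^{-1/2}\int_0^t z_i$ is that same singular term.

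The honest route is an integration by parts, i.e.\ adding the corrector $\sqrt{\varepsilon}\sum_i\frac{\lambda_i}{\alpha_i}\langle\nabla K(p),z_i\rangle$. This cancels the singular forcing but generates $-\sqrt{\varepsilon}\frac{\lambda_i}{\alpha_i}\langle\nabla^2K(p)\nabla U(q),z_i\rangle$. That term is manageable if, for example, $|\nabla U|\le C(1+U)$. Assumption \ref{UAssumptions}, however, controls only $U$ and $\langle\nabla U(q),q\rangle$, not $|\nabla U|$. You therefore need an extra growth hypothesis or a different idea.

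The paper does not supply this either. Its Itô expansion of $\Gamma_1$ uses the $t$-dependent Duhamel kernels as if they were integrands, which is not a valid expansion, so you cannot borrow that step.
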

\begin{proof}
We first prove \eqref{GLREExpectationbound}.  Let $(q^{(\varepsilon)},p^{(\varepsilon)},z^{(\varepsilon)})$ denote the solution to \eqref{GRLEpqzrescaled}. In the below, $C$ is independent of $c$ and $\varepsilon$ and may change from line to line. By applying Itô's formula to $e^{-\frac{\alpha}{\varepsilon}t}z_i^{(\varepsilon)}(t)$, we find from \eqref{zrescaled} that
\begin{align}
\label{zito}
    z_i^{(\varepsilon)}(t)=e^{-\frac{\alpha_i}{\varepsilon}t}z_i^{(\varepsilon)}(0)+\int_0^te^{-(t-s)\frac{\alpha_i}{\varepsilon}}\left(-\frac{\lambda_i}{\sqrt{\varepsilon}}\nabla K (p^{(\varepsilon)}(s))\, \df s+\sqrt{\frac{2\alpha_i}{\varepsilon}}\, \df W_i(s)\right).
\end{align}
Substituting this expression into \eqref{prescaled} we get
\begin{align}
    p^{(\varepsilon)}(t)=&- \int_0^t\nabla U(q^{(\varepsilon)}(s))\,\df t-\gamma\int_0^t\nabla K (p^{(\varepsilon)}(s))\,\df s+p^{(\varepsilon)}(0)\notag\\ &+\frac{1}{\sqrt{\varepsilon}}\sum_{i=1}^M\lambda_i     \left(\int_0^te^{-\frac{\alpha_i}{\varepsilon}s}z_i^{(\varepsilon)}(0)\,\df s  \right.\notag \\ &\left. +\frac{1}{\sqrt{\varepsilon}}\int_0^t\int_0^se^{\frac{-(s-u)\alpha_i}{\varepsilon}}\left(-\lambda_i \nabla K(p^{(\varepsilon)}(u))\, \df u+\sqrt{2\alpha_i}\,\df W_i(u)\right)\,\df s     \right)\notag\\
    &+\int_0^t\sqrt{2\gamma}\, \df W(s).\label{prescaled2}
\end{align}
We will use the following elementary identity
\begin{equation}
\label{OUsolution}
    \frac{1}{a}\int_0^t\Big(1-e^{-a(t-s)}\Big)\, \df W(s)=\int_0^t\int_0^se^{-a(t-u)}\, \df W(u)\, \df s.
\end{equation}
By application of \eqref{OUsolution}, as well as Fubini's theorem, we have
\begin{align*}
    &\int_0^t\int_0^se^{\frac{-(s-u)\alpha_i}{\varepsilon}}\left(-\lambda_i \nabla K(p^{(\varepsilon)}(u))\, \df u+\sqrt{2\alpha_i}\,\df W_i(u)\right)\,\df s\\=&\frac{\varepsilon}{\alpha_i}\int_0^t\left(1-e^{-(t-u)\frac{\alpha_i}{\varepsilon}}\right)\left(-\lambda_i \nabla K(p^{(\varepsilon)}(u))\, \df u+\sqrt{2\alpha_i}\,\df W_i(u)\right).
\end{align*}
Substituting this into \eqref{prescaled2}, we get
\begin{align*}
    p^{(\varepsilon)}(t)=&- \int_0^t\nabla U(q^{(\varepsilon)}(s))\,\df t-\gamma\int_0^t\nabla K (p^{(\varepsilon)}(s))\,\df s+p^{(\varepsilon)}(0)+\frac{1}{\sqrt{\varepsilon}}\sum_{i=1}^M\lambda_i     \int_0^te^{-\frac{\alpha_i}{\varepsilon}s}z^{(\varepsilon)}_i(0)\,\df s   \\ & +\frac{\lambda_i}{\alpha_i}\int_0^t\left(1-e^{-(t-s)\frac{\alpha_i}{\varepsilon}}\right)\left(-\lambda_i \nabla K (p^{(\varepsilon)}(s))\, \df s+\sqrt{2\alpha_i}\,\df W_i(s)\right)     \\
    &+\int_0^t\sqrt{2\gamma}\, \df W(s).
\end{align*}
We define
\[\Gamma_1(t)=\sqrt{1+\epsilon|p^{(\varepsilon)}(t)|^2}U(q^{(\varepsilon)}(t))+\frac{1}{2}|p^{(\varepsilon)}(t)|^2+\frac{1}{2}\epsilon U(q^{(\varepsilon)}(t))^2.\]
By applying Itô's formula, we can write this term as
\begin{align*}
    \Gamma_1(t)=\Gamma_1(0)
    +&\int_0^t\Big\langle \frac{\epsilon p^{(\varepsilon)}(s)}{\sqrt{1+\epsilon|p^{(\varepsilon)}(s)|^2}}U(q^{(\varepsilon)}(s))+p^{(\varepsilon)}(s),\gamma \nabla K(p^{(\varepsilon)}(s))\\
    &\hspace{2cm}+ \sum_{i=1}^M \frac{\lambda_i}{\sqrt{\varepsilon}}e^{-\frac{\alpha_i}{\varepsilon}s}z^{(\varepsilon)}_i(0) -\frac{\lambda_i^2}{\alpha_i}(e^{-(t-s)\frac{\alpha_i}{\varepsilon}}+1)\nabla K(p^{(\varepsilon)}(s)) \Big\rangle\, \df s\\
    +&\int_0^t\Big(\gamma+\sum_{i=1}^M\frac{\lambda_i^2}{\alpha_i}\left(1-e^{-(t-s)\frac{\alpha_i}{\varepsilon}}\right)^2\Big)\Big(\frac{\epsilon dU(q^{(\varepsilon)}(s))}{(1+\epsilon|p^{(\varepsilon)}(s)|^2)^{\frac{3}{2}}}+d\Big)\,\df s\\
    + &\int_0^t\sqrt{2\gamma}\Big\langle \frac{\epsilon p^{(\varepsilon)}(s)U(q^{(\varepsilon)}(s))}{\sqrt{1+\epsilon|p^{(\varepsilon)}(s)|^2}} +p^{(\varepsilon)}(s),\df W(s) \Big\rangle\\
    +    &\sum_{i=1}^M\int_0^t  \sqrt{\frac{{2\lambda_i^2}}{{\alpha_i}}}\left   (e^{(t-s)\frac{\alpha_i}{\varepsilon}}+1\right)\Big\langle \frac{\epsilon p^{(\varepsilon)}(s)U(q^{(\varepsilon)}(s))}{\sqrt{1+\epsilon|p^{(\varepsilon)}(s)|^2}} +p^{(\varepsilon)}(s),\df W_i(s) \Big\rangle.
\end{align*}
Using the following estimates
\begin{align*}
&\int_0^t\Big\langle \frac{\epsilon p^{(\varepsilon)}(s)}{\sqrt{1+\epsilon|p^{(\varepsilon)}(s)|^2}}U(q^{(\varepsilon)}(s))+p^{(\varepsilon)}(s), \sum_{i=1}^M \frac{\lambda_i}{\sqrt{\varepsilon}}e^{-\frac{\alpha_i}{\varepsilon}s}z_i^{(\varepsilon)}(0)  \Big\rangle\, \df s   \\
\leq &\int_0^t \Big| \frac{\epsilon p^{(\varepsilon)}(s)}{\sqrt{1+\epsilon|p^{(\varepsilon)}(s)|^2}}U(q^{(\varepsilon)}(s))+p^{(\varepsilon)}(s) \Big|^2+\sum_{i=1}^M\Big|\frac{\lambda_i}{\sqrt{\varepsilon}}e^{-\frac{\alpha_i}{\varepsilon}s}z^{(\varepsilon)}_i(0)\Big|^2\df s \\
\leq &\int_0^t\Big|\frac{\epsilon p^{(\varepsilon)}(s)}{\sqrt{1+\epsilon|p^{(\varepsilon)}(s)|^2}}U(q^{(\varepsilon)}(s))+p^{(\varepsilon)}(s)\Big|^2 \df s+\sum_{i=1}^M \left| z_i^{(\varepsilon)}(0) \right|^2 \int_0^t\frac{\lambda_i^2}{\varepsilon}e^{-\frac{2\alpha}{\varepsilon}}\df s\\
\leq &\int_0^t\Big|\frac{\epsilon p^{(\varepsilon)}(s)}{\sqrt{1+\epsilon|p^{(\varepsilon)}(s)|^2}}U(q^{(\varepsilon)}(s))+p^{(\varepsilon)}(s)\Big|^2 \df s + C\sum_{i=1}^M\left|z_i^{(\varepsilon)}(0) \right|^2,
\end{align*}
we have
\begin{align*}
    &\int_0^t\Big\langle \frac{\epsilon p^{(\varepsilon)}(s)}{\sqrt{1+\epsilon|p^{(\varepsilon)}(s)|^2}}U(q^{(\varepsilon)}(s))+p^{(\varepsilon)}(s),\gamma \nabla K(p^{(\varepsilon)}(s))\\
    &\hspace{2cm}+ \sum_{i=1}^M \frac{\lambda_i}{\sqrt{\varepsilon}}e^{-\frac{\alpha_i}{\varepsilon}s}z^{(\varepsilon)}_i(0) -\frac{\lambda_i^2}{\alpha_i}(1-e^{-(t-s)\frac{\alpha_i}{\varepsilon}})\nabla K(p^{(\varepsilon)}(s)) \Big\rangle\, \df s\\
    \leq &  \int_0^t 2\Big|\frac{\epsilon p}{\sqrt{1+\epsilon|p^{(\varepsilon)}|^2}}U(q^{(\varepsilon)}(s))+p^{(\varepsilon)}(s)\Big|^2 +C|\nabla K (p^{(\varepsilon)}(s))|^2\,\df s + C\sum_{i=1}^M\left|z_i^{(\varepsilon)}(0)\right|^2 \\
    \leq &\int_0^t C\epsilon U(q^{(\varepsilon)}(s))^2 + C\left|p^{(\varepsilon)}(s)\right|^2 \,\df s+C\sum_{i=1}^M\left|z_i^{(\varepsilon)}(0)\right|^2.
\end{align*}
Furthermore, it holds that
\begin{align*}
\int_0^t\Big(\gamma+\sum_{i=1}^M\frac{\lambda_i^2}{\alpha_i}\left(1-e^{-(t-s)\frac{\alpha_i}{\varepsilon}}\right)^2\Big)\Big(\frac{\epsilon dU(q^{(\varepsilon)}(s))}{(1+\epsilon|p^{(\varepsilon)}(s)|^2)^{\frac{3}{2}}}+d\Big)\,\df s \leq \int_0^t C\sqrt{1+\epsilon|p^{(\varepsilon)}(s)|^2}U(q(s))\,\df s.
\end{align*}
Thus, we obtain
\begin{align*}
    \IE\Big[ \sup_{t\in{[0,T]}}\left|\Gamma(t)\right|^n\Big]\leq &\IE\left[\left| \Gamma(0)\right|^n \right]+ C\sum_{i=1}^M\left|z_i^{(\varepsilon)}(0)\right|^2+  \int_0^TC\IE\Big[ \sup_{s\in[0,t]} \left|\Gamma(s)\right|^n\Big]\,\df t \\+&C\IE\Big[\sup_{t\in[0,T]}\Big|\int_0^t\Big\langle \frac{\epsilon p^{(\varepsilon)}(s)U(q^{(\varepsilon)}(s))}{\sqrt{1+\epsilon|p^{(\varepsilon)}(s)|^2}} +p^{(\varepsilon)}(s),\df W(s) \Big\rangle \Big|^n \\ +    & C\sum_{i=1}^M \sup_{t\in[0,T]}\Big| \int_0^t  \left(1-e^{-(t-s)\frac{\alpha_i}{\varepsilon}}\right)\Big\langle \frac{\epsilon p^{(\varepsilon)}(s)U(q^{(\varepsilon)}(s))}{\sqrt{1+\epsilon|p^{(\varepsilon)}(s)|^2}} +p^{(\varepsilon)}(s),\df W_i(s) \Big\rangle\Big|^n\Big].
\end{align*}
Through application of the Burkholder-Davis-Gundy Inequality, for $j\in\left\{ 0,1 \right\}$, we have the following estimates
\begin{align*}
    &\IE\Big[\sup_{t\in[0,T]}\Big| \int_0^t  \Big(1-e^{-(t-s)\frac{\alpha_i}{\varepsilon}}\Big)^j\Big\langle \frac{\epsilon p^{(\varepsilon)}(s)U(q(^{(\varepsilon)}s))}{\sqrt{1+\epsilon|p^{(\varepsilon)}(s)|^2}} +p^{(\varepsilon)}(s),\df W(s) \Big\rangle\Big|^n\Big]\\
    \leq &C\IE\Big[ \Big(\int_0^T  \left(1-e^{-(t-s)\frac{\alpha_i}{\varepsilon}}\right)^{2j} \frac{\epsilon^2 |p^{(\varepsilon)}(s)|^2 U(q^{(\varepsilon)}(s))^2}{1+\epsilon |p^{(\varepsilon)}(s)|^2} +|p^{(\varepsilon)}(s)|^2 \,\df s\Big)^{\frac{n}{2}}
    \Big]\\
    \leq& C\IE \Big[ \Big(\int_0^T   \frac{\epsilon^2 |p^{(\varepsilon)}(s)|^2 U(q^{(\varepsilon)}(s))^2}{1+\epsilon |p^{(\varepsilon)}(s)|^2} +|p^{(\varepsilon)}(s)|^2 \,\df s\Big)^{n} \Big] +C\\
    \leq& C\int_0^T\IE \Big[   \sup_{s\in[0,t]} \epsilon^n U(q^{(\varepsilon)}(s))^{2n} \Big]\df t + C\int_0^T\IE \Big[\sup_{s\in[0,t]}\left|p^{(\varepsilon)}(s)\right|^{2n}\Big]\, \df t + C.
\end{align*}
Putting the above together, we get
\begin{align*}
     \IE\Big[ \sup_{t\in{[0,T]}}\left|\Gamma_1(t)\right|^n\Big]\leq  &\IE\left[\left| \Gamma_1(0)\right|^n \right]+C\sum_{i=1}^M\IE\left[\left|z_i^{(\varepsilon)}(0)\right|^{2n}\right] +\int_0^TC\IE\Big[ \sup_{s\in[0,t]} \left|\Gamma_1(s)\right|^n\Big]\,\df t+C.
\end{align*}
By applying Gronwall's inequality, we get
\begin{align}
\label{Gammabound}
     \IE\left[ \sup_{t\in{[0,T]}}\left|\Gamma_1(t)\right|^n\right]\leq  &\IE\left[\left| \Gamma_1(0)\right|^n \right] +C\sum_{i=1}^M\IE\left[\left|z_i^{(\varepsilon)}(0)\right|^{2n}\right] +C.
\end{align}
From Assumption \ref{boundedinitialconditionsrel}, $\IE\left[|\Gamma(0)|^n\right]<C$. Together with \eqref{Gammabound}, we find
\[\IE\left[\sup_{t\in[0,T]}|q(t)|^n+|p(t)|^n\right] \leq    C.\]
It follows from \eqref{zito} that, for $1\leq i\leq M$,
\begin{align}
\IE\left[\sup_{t\in[0,T]}|z^{(\varepsilon)}_i(t)|^n\right]\leq   C\left(\IE\left[|z_i^{(\varepsilon)}(0)|^n\right]+\IE\left[\sup_{t\in[0,T]}|p^{(\varepsilon)}(t)|^n\right]+C\right).
\end{align}
The result \eqref{GLREExpectationbound} clearly follows.

Concerning \eqref{GLEExpectationbound}-\eqref{RLEExpectationbound}, on the one hand, we note that the proof of \eqref{RLEExpectationbound} is the same as that of \cite[Lemma 4.4]{duong2024asymptotic}. On the other hand, estimate \eqref{GLEExpectationbound} can be derived using an argument to similar to the above. Inspired by of\cite[Proposition 13]{nguyen2018small}, we choose
\begin{align*}
    \Gamma_2(t)= \frac{1}{2}|p^{(\varepsilon)}(t)|^2+U(q^{(\varepsilon)}(t)).
\end{align*}
By applying Itö's formula,

\begin{align*}
    \Gamma_2(t)=\Gamma_2(0)
-& \int_0^t\left|p^{(\varepsilon)}(s)\right|^2\df s-\int_0^t\Big\langle p^{(\varepsilon)}(s),
     \sum_{i=1}^M \frac{\lambda_i}{\sqrt{\varepsilon}}e^{-\frac{\alpha_i}{\varepsilon}s}z^{(\varepsilon)}_i(0) -\frac{\lambda_i^2}{\alpha_i}(e^{-(t-s)\frac{\alpha_i}{\varepsilon}}+1)p^{(\varepsilon)}(s) \Big\rangle\, \df s\\
    +&\int_0^td\Big(\gamma+\sum_{i=1}^M\frac{\lambda_i^2}{\alpha_i}\left(1-e^{-(t-s)\frac{\alpha_i}{\varepsilon}}\right)^2\Big)\,\df s
    + \int_0^t\sqrt{2\gamma}\Big\langle p^{(\varepsilon)}(s),\df W(s) \Big\rangle\\
    +    &\sum_{i=1}^M\int_0^t  \sqrt{\frac{{2\lambda_i^2}}{{\alpha_i}}}\left   (1-e^{(t-s)\frac{\alpha_i}{\varepsilon}}\right)\Big\langle p^{(\varepsilon)}(s),\df W_i(s) \Big\rangle.
\end{align*}
By modifying the above proof to the classical setting, we obtain

\begin{align*}
     \IE\Big[ \sup_{t\in{[0,T]}}\left|\Gamma_2(t)\right|^n\Big]\leq  &\IE\left[\left| \Gamma_2(0)\right|^n \right]+C\sum_{i=1}^M\IE\left[\left|z_i^{(\varepsilon)}(0)\right|^{2n}\right] +\int_0^TC\IE\Big[ \sup_{s\in[0,t]} \left|\Gamma_2(s)\right|^n\Big]\,\df t+C.
\end{align*}
The result then follows using the Gronwall-type argument as above.

\end{proof}

\subsection{White-noise limit} \label{sec:white-noise-newtonian:white-noise}

Owing to the presence of the nonlinearity, we will not directly establish Theorem \ref{relativisticwhitenoiselimit} on the white noise limit of \eqref{GRLEpqzrescaled}. Instead, we will prove an analogue of Theorem \ref{relativisticwhitenoiselimit}  while assuming further that $\nabla U$ is Lipschitz. Then, we will remove such a restriction by leveraging the uniform moment bounds from Proposition \ref{boundedprocessesgen}. 

More specifically, we have the following auxiliary result, whose argument is similar to \cite[Theorem 2.6]{ottobre_asymptotic_2011} adapted to the relativistic setting.

\begin{proposition}
\label{relwhitenoiselimitlipschitz}
Let $\left(q^{(\varepsilon)},p^{(\varepsilon)},z^{(\varepsilon)}\right)$ be the solution to \eqref{GRLEpqzrescaled}. Then, under Assumptions \ref{UAssumptions} and \ref{boundedinitialconditionsrel}, and that $\nabla U$ is Lipschitz continuous, for all $T>0$ and $n\in\IN$, it holds that
\begin{align} \label{ineq:white-noise:Lipschitz}
    \IE\Big[\sup_{t\in[0,T]}|q^{(\varepsilon)}(t)-Q(t)|^n+ \sup_{t\in[0,T]}|p^{(\varepsilon)}(t)-P(t)|^n\Big]\leq C \varepsilon^\frac{n}{2},
\end{align}
for some constant $C=C(T,\lambda,\alpha,n,M)>0$. In the above, $(Q,P)$ is defined as in \eqref{PQprocess}.
\end{proposition}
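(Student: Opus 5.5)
We follow the strategy sketched after Theorem \ref{relativisticwhitenoiselimit}: eliminate $z^{(\varepsilon)}$ through its explicit Ornstein--Uhlenbeck representation and compare the resulting closed equation for $p^{(\varepsilon)}$ with \eqref{bigP}. We insert \eqref{zito} into \eqref{prescaled} and apply the identity \eqref{OUsolution} together with Fubini's theorem, exactly as in the proof of Proposition \ref{boundedprocessesgen}. This gives
\begin{align*}
p^{(\varepsilon)}(t)=\;&p_0-\int_0^t\nabla U(q^{(\varepsilon)}(s))\,\df s-\Big(\gamma+\sum_{i=1}^M\frac{\lambda_i^2}{\alpha_i}\Big)\int_0^t\nabla K(p^{(\varepsilon)}(s))\,\df s\\
&+\sqrt{2\gamma\beta^{-1}}\,W(t)+\sum_{i=1}^M\sqrt{\frac{2\beta^{-1}\lambda_i^2}{\alpha_i}}\,W_i(t)+R^{(\varepsilon)}(t),
\end{align*}
where the remainder is $R^{(\varepsilon)}=\sum_{i=1}^M\big(R^{(\varepsilon)}_{i,1}+R^{(\varepsilon)}_{i,2}+R^{(\varepsilon)}_{i,3}\big)$, with:
\begin{itemize}
\item $R^{(\varepsilon)}_{i,1}(t)=\frac{\lambda_i}{\sqrt{\varepsilon}}\int_0^t e^{-\alpha_i s/\varepsilon}\,\df s\; z_{i,0}=\frac{\lambda_i\sqrt{\varepsilon}}{\alpha_i}\big(1-e^{-\alpha_i t/\varepsilon}\big)z_{i,0}$;
\item $R^{(\varepsilon)}_{i,2}(t)=\frac{\lambda_i^2}{\alpha_i}\int_0^t e^{-\alpha_i(t-s)/\varepsilon}\nabla K(p^{(\varepsilon)}(s))\,\df s$;
\item $R^{(\varepsilon)}_{i,3}(t)=-\lambda_i\sqrt{2\beta^{-1}/\alpha_i}\int_0^t e^{-\alpha_i(t-s)/\varepsilon}\,\df W_i(s)$.
\end{itemize}
Here we take the same Brownian motions $W,W_i$ to drive \eqref{PQprocess}.

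Subtracting the equations for $(Q,P)$ and setting $e_q=q^{(\varepsilon)}-Q$, $e_p=p^{(\varepsilon)}-P$, we use the Lipschitz continuity of $\nabla U$ (assumed here) and of $\nabla K$ (Lemma \ref{relativistickineticenergylipschitzcontinuous}, with constant uniform in $c$). This gives, pathwise,
\[
\sup_{s\le t}\big(|e_q(s)|+|e_p(s)|\big)\le C\int_0^t\sup_{u\le s}\big(|e_q(u)|+|e_p(u)|\big)\,\df s+\sup_{s\le T}|R^{(\varepsilon)}(s)|.
\]
Gronwall's lemma then yields $\sup_{t\le T}(|e_q|+|e_p|)\le e^{CT}\sup_{t\le T}|R^{(\varepsilon)}(t)|$. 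Raising to the $n$th power and taking expectations reduces \eqref{ineq:white-noise:Lipschitz} to showing $\IE\sup_{t\le T}|R^{(\varepsilon)}(t)|^n\le C\varepsilon^{n/2}$.

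The first two remainder terms are easy:
\begin{itemize}
\item $|R^{(\varepsilon)}_{i,1}|\le C\sqrt{\varepsilon}\,|z_{i,0}|$, which has the required moment by Assumption \ref{boundedinitialconditionsrel}.
\item Since $|\nabla K(p)|\le|p|$, we get $|R^{(\varepsilon)}_{i,2}(t)|\le C\sup_{s\le T}|p^{(\varepsilon)}(s)|\int_0^t e^{-\alpha_i(t-s)/\varepsilon}\,\df s\le C\varepsilon\sup_{s\le T}|p^{(\varepsilon)}(s)|$. The moment bound \eqref{GLREExpectationbound}, uniform in $\varepsilon$, then gives $\IE\sup|R^{(\varepsilon)}_{i,2}|^n\le C\varepsilon^n$.
\end{itemize}

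The main obstacle is the stochastic convolution $R^{(\varepsilon)}_{i,3}$, which is a stationary-type OU process of variance $O(\varepsilon)$. Its supremum over $[0,T]$ is not a martingale supremum, so BDG does not apply directly, and a naive bound only gives $\varepsilon^{n/2}$ up to a $\log(1/\varepsilon)$ factor. We would write $Y(t)=\int_0^t e^{-\alpha_i(t-s)/\varepsilon}\,\df W_i(s)=W_i(t)-\frac{\alpha_i}{\varepsilon}\int_0^t e^{-\alpha_i(t-s)/\varepsilon}W_i(s)\,\df s$ and control the supremum via the Kolmogorov--Garsia--Rodemich--Rumsey criterion, using $\IE|Y(t)-Y(s)|^{2m}\le C_m\min(|t-s|,\varepsilon)^m$. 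Alternatively, we would cite the standard maximal estimate $\IE\sup_{t\le T}|Y(t)|^n\le C(\varepsilon\log(1+T/\varepsilon))^{n/2}$, as in \cite{ottobre_asymptotic_2011}. If only this logarithmic bound can be obtained, we would accept $C\varepsilon^{n/2-\delta}$ for any $\delta>0$, which still suffices for the subsequent localization argument used to prove Theorem \ref{relativisticwhitenoiselimit}. Still, we would first attempt the sharp rate by splitting $[0,T]$ into blocks of length $\varepsilon$ and using exponential Gaussian tail bounds on each block.
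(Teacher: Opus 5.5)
Your reduction is the paper's own, written out in expanded form. The paper integrates \eqref{zrescaled} in time and writes the remainder as $-\sqrt{\varepsilon}\sum_{i=1}^M\frac{\lambda_i}{\alpha_i}\big(z_i^{(\varepsilon)}(t)-z_{i,0}\big)$. After substituting \eqref{zito}, this is exactly $\sum_i\big(R^{(\varepsilon)}_{i,1}+R^{(\varepsilon)}_{i,2}+R^{(\varepsilon)}_{i,3}\big)$, and the paper then runs the same Gronwall argument.

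Where you stop, the paper concludes by asserting that $\IE\big[\sup_{t\le T}|z_i^{(\varepsilon)}(t)-z_{i,0}|^n\big]$ is bounded uniformly in $\varepsilon$, citing \eqref{GLREExpectationbound}. That bound is not valid. The $z$-estimate at the end of the proof of Proposition \ref{boundedprocessesgen} treats the stochastic convolution $\sqrt{2\alpha_i/\varepsilon}\int_0^t e^{-\alpha_i(t-s)/\varepsilon}\,\df W_i(s)$ in \eqref{zito} as an $O(1)$ term. In fact it is a unit-variance Ornstein--Uhlenbeck process with correlation time $\varepsilon/\alpha_i$, and its supremum over $[0,T]$ grows like $\sqrt{\log(T/\varepsilon)}$. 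So the obstacle you flagged in $R^{(\varepsilon)}_{i,3}$ is exactly the step at which the paper's proof breaks.

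The step cannot be completed, so your block-splitting attempt will fail. Blocks of length $\varepsilon$ give about $T/\varepsilon$ nearly independent Gaussians of variance of order $\varepsilon$, whose maximum is of order $\sqrt{\varepsilon\log(T/\varepsilon)}$; the logarithm is genuine. To see that the stated rate itself is false, note that $e_p-R^{(\varepsilon)}$ is a time integral of Lipschitz differences, so
\[
\sup_{t\le T}|R^{(\varepsilon)}(t)|\le (1+CT)\sup_{t\le T}\big(|e_q(t)|+|e_p(t)|\big).
\]
The terms $R^{(\varepsilon)}_{i,1}$ and $R^{(\varepsilon)}_{i,2}$ are $O(\sqrt{\varepsilon})$ in $L^n(\Omega;C([0,T]))$. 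Sudakov's minoration, applied to the Gaussian process $\sum_iR^{(\varepsilon)}_{i,3}$ on a grid of mesh of order $\varepsilon$, gives $\IE\sup_{t\le T}\big|\sum_iR^{(\varepsilon)}_{i,3}(t)\big|^n\ge c\,(\varepsilon\log(1/\varepsilon))^{n/2}$. Hence the left side of \eqref{ineq:white-noise:Lipschitz} is at least $c\,(\varepsilon\log(1/\varepsilon))^{n/2}$ for small $\varepsilon$. The proposition is therefore false as stated for the synchronous coupling in \eqref{PQprocess}.

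What is true is precisely your fallback, $C\big(\varepsilon\log(1+T/\varepsilon)\big)^{n/2}\le C_\delta\,\varepsilon^{n/2-\delta}$. You can obtain it either from the OU maximal inequality or from Garsia--Rodemich--Rumsey with your increment bound. As you note, this weaker rate is all that the localization in Theorem \ref{relativisticwhitenoiselimit} needs. For $R^{(\varepsilon)}_{i,2}$, it is safer to derive the uniform $p$-moments directly from your integrated identity and Gronwall (valid in the Lipschitz case) than to lean on \eqref{GLREExpectationbound}.
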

\begin{proof}
   In the below, $C$ is independent of $c$ and $\varepsilon$ and may change from line to line. Recall that we assume $q^{(\varepsilon)}$ and $p^{(\varepsilon)}$ have the same initial conditions of $Q$ and $P$ respectively. Proceeding similarly to \cite{ottobre_asymptotic_2011}, from \eqref{qrescaled}, and the Lipschitz continuity of $\nabla K (p)$, we have
\begin{equation}
    \left|q^{(\varepsilon)}(t)-Q(t)\right|\leq C\int_0^t\left|p^{(\varepsilon)}(s)-P(s) \right|\df s.
\end{equation}
From \eqref{zrescaled}, we get
\begin{equation}
    \frac{1}{\sqrt{\varepsilon}}z^{(\varepsilon)}_i(s)\df s=-\frac{\sqrt{\varepsilon}}{\alpha_i}\left(z_i^{(\varepsilon)}(t)-z_i^{(\varepsilon)}(0)\right)-\frac{\lambda_i}{\alpha_i}\int_0^t\nabla K(p^{(\varepsilon)}(s)) \df s+\sqrt{\frac{2}{\alpha_i}}\text{d}W_i(t).
\end{equation}

\noindent Next from \eqref{prescaled} and \eqref{bigP}, we have
\begin{align*}
p^{(\varepsilon)}(t)-P(t)=&\int_0^t\left(\nabla U(Q(s))-\nabla U(q^{(\varepsilon)}(s))\right)\df s-\sum_{i=1}^M\sqrt{\varepsilon}\left(\frac{\lambda_i}{\alpha_i}\left(z_i^{(\varepsilon)}(t)-z_i^{(\varepsilon,}(0)\right)\right)\\
+&\Big(\gamma+\sum_{i=1}^M\frac{\lambda_i^2}{\alpha_i}\Big)\int_0^t\left(\nabla  K(P(s))-\nabla K(p^{(\varepsilon)}(s))\right)\df s.
\end{align*}
For notational convenience, we define
 \[\chi_1(T)=\IE\Big[\sup_{t\in[0,T]}\left(\left|q^{(\varepsilon)}(t)-Q(t)\right|^n+\left|p^{(\varepsilon)}(t)-P(t)\right|^n\right)\Big].\]
By the Lipschitz continuity of $\nabla U$ and H\"older's inequality, we have
\begin{align*}
    \chi_`(T)
    &\leq CT^{n-1}\int_0^T\IE\left[\sup_{s\in[0,t]}|q^{(\varepsilon)}(s)-Q(s)|^n\right]\df t \\
    &+C\left(\gamma+\sum_{i=1}^M\frac{\lambda_i^2}{\alpha_i}\right)T^{n-1}\int_0^T\IE\left[\sup_{s\in[0,t]}|p^{(\varepsilon)}(s)-P(s)|^n\right]\df t\\
    &+C\varepsilon^\frac{n}{2}\sum_{i=1}^M\left(\frac{\lambda_i}{\alpha_i}\right)^n\IE\left[\sup_{t\in[0,T]}|z_i^{(\varepsilon)}(t)-z_i^{(\varepsilon)}(0)|^n\right].
\end{align*}
From this we deduce
\begin{align*}
    \chi_1(T)\leq C\int_0^T\chi_1(t)\df t+C\varepsilon^{\frac{n}{2}}\sum_{i=1}^M\IE\left[\sup_{t\in[0,T]}|z_i^{(\varepsilon)}(t)-z_i^{(\varepsilon)}(0)|^n\right].
\end{align*}
By applying Gronwall's lemma, we obtain
\begin{align*}
    \chi_1(T)&\leq C\varepsilon^\frac{n}{2}\sum_{i=1}^M\IE\Big[\sup_{t\in[0,T]}|z_i^{(\varepsilon)}(t)-z_i^{(\varepsilon)}(0)|^n\Big]\\&+C\varepsilon^{\frac{n}{2}}\int_0^T\sum_{i=1}^M\IE\Big[\sup_{s\in[0,t]}|z_i^{(\varepsilon)}(s)-z_i^{(\varepsilon)}(0)|^n\Big]\df t,
\end{align*}
whence
\begin{align*}
    \chi_1(T)&\leq C\varepsilon^\frac{n}{2}\sum_{i=1}^M\IE\Big[\sup_{t\in[0,T]}|z_i^{(\varepsilon)}(t)-z_i^{(\varepsilon)}(0)|^n\Big].
\end{align*}
In light of Proposition \ref{boundedprocessesgen} together with \ref{boundedinitialconditionsrel}, the supremum on the above right hand side is uniformly bounded independent of $\varepsilon$. This establishes \eqref{ineq:white-noise:Lipschitz}, as claimed.

\end{proof}

We now present the proof of Theorem \ref{relativisticwhitenoiselimit}, whose argument will employ the results from Propositions \ref{boundedprocessesgen} and \ref{relwhitenoiselimitlipschitz}.

\begin{proof}[Proof of Theorem \ref{relativisticwhitenoiselimit}]
    For $R>0$, we define the following stopping times
\begin{align*}
   \tau_R =\inf\Big\{ T: \sup_{t\in[0,T]} |q^{(\varepsilon)}(t)|>R \Big\};
\end{align*}
and
\begin{align*}
    \tau^\varepsilon_R =\inf\Big\{ T: \sup_{t\in[0,T]} |Q(t)|>R \Big\}.
\end{align*}
We also define a cut-off function by
\[\theta_R(x)=\begin{cases}
    1, &|x|\leq R,\\
    0, &|x|\geq R+1.
\end{cases}\]
Using this cut-off function, we next introduce the cut-off system for the GRLE
 \begin{align*}
dq^{(R,\varepsilon)}(t)&=\nabla K (p^{(R,\varepsilon)}(t))\,\df t,\\
 \notag dp^{(R,\varepsilon)}(t)&=-\nabla \theta_R(q(t))U(q^{(R,\varepsilon)}(t))\,\df t-\gamma\nabla K (p^{(R,\varepsilon)}(t))\,\df t+\frac{1}{\sqrt{\varepsilon}}\sum_{i=1}^M \lambda_i z_i^{(R,\varepsilon)}(t)\,\df t\\ &+\sqrt{2\gamma}\, \df W(t),\\
 dz_i^{(R,\varepsilon)}(t)&=-\frac{\lambda_i}{\varepsilon}\nabla K (p^{(R,\varepsilon)}(t))\,\df t-\frac{\alpha_i}{\varepsilon} z_i^{(R,\varepsilon)}(t)\,\df t+\sqrt{\frac{2\alpha_i}{{\varepsilon}}}\,\df W_i(t),
\end{align*}
\[q^{(R,\varepsilon)}(0)= q_0,\qquad p^{(R,\varepsilon)}(0)=p_0,\qquad z_i^{(R,\varepsilon)}(0)=z_{i,0},\]
and the corresponding cut-off system for the relativistic underdamped Langevin dynamics: 
 \begin{align*}
\df Q^{(R)}(t)&=\nabla K (P^{(R)}(t))\,\df t,\\
 \df P^{(R)}(t)&=-\nabla U(Q^{(R)}(t))\,\df t-\left( \gamma+\sum_{i=1}^M\frac{\lambda^2_i}{\alpha_i} \right) \nabla K (P^{(R)}(t))\,\df t+\sqrt{2\beta^{-1}   \gamma} \, d W(t)+ \sum_{i=1}^M\sqrt{  \frac{2\beta^{-1} \lambda_i^2}{\alpha_i}} \, \df W_i(t),
\\ Q(0)&=q_0,\quad P(0)=p_0.
\end{align*}   
By conditioning on the events $\left\{ \tau_R<T  \right\}$ and $\left\{ \tau^\varepsilon_R<T  \right\}$ we have
     \begin{align*}
         &\IE\Big[\sup_{t\in[0,T]}\left|q^{(\varepsilon)}(t)-Q(t)\right|^n+\sup_{t\in[0,T]}\left|p^{(\varepsilon)}(t)-P(t)\right|^n\Big]\\
         \leq&\IE\left[\sup_{t\in[0,T]}\left|q^{(R,\varepsilon)}(t)-Q^{(R)}(t)\right|^n+\sup_{t\in[0,T]}\left|p^{(R,\varepsilon)}(t)-P^{(R)}(t)\right|^n\right]\\
         &+\IE\left[\left(\sup_{t\in[0,T]}\left|q^{(\varepsilon)}(t)-Q(t)\right|^n+\sup_{t\in[0,T]}\left|p^{(\varepsilon)}(t)-P(t)\right|^n\right)\mathbbm{1}\left\{\tau_R>T\right\}\right]\\
         &+\IE\left[\left(\sup_{t\in[0,T]}\left|q^{(\varepsilon)}(t)-Q(t)\right|^n+\sup_{t\in[0,T]}\left|p^{(\varepsilon)}(t)-P(t)\right|^n\right)\mathbbm{1}\left\{\tau_R^\epsilon>T\right\}\right].
    \end{align*}
    By Proposition \ref{relwhitenoiselimitlipschitz}, we have
    \[ \IE\left[\sup_{t\in[0,T]}\left|q^{(R,\varepsilon)}(t)-Q^{(R)}(t)\right|^n+\sup_{t\in[0,T]}\left|p^{(R,\varepsilon)}(t)-P^{(R)}(t)\right|^n\right]\leq C\varepsilon^{\frac{n}{2}}.\]
    We emphasize that the above constant $C=C(R)$ does not depend on $\varepsilon$. From Hölder's inequality, we can estimate
\begin{align*}
    &\IE\left[\left(\sup_{t\in[0,T]}\left|q^{(\varepsilon)}(t)-Q(t)\right|^n+\sup_{t\in[0,T]}\left|p^{(\varepsilon)}(t)-P(t)\right|^n\right)\mathbbm{1}\left\{\tau_R>T\right\}\right]\\
    \leq &\IE\left[\sup_{t\in[0,T]}\left|q^{(\varepsilon)}(t)-Q(t)\right|^{2n}+\sup_{t\in[0,T]}\left|p^{(\varepsilon)}(t)-P(t)\right|^{2n}\right]^{\frac{1}{2}}\IP\left[\tau_R>T\right]^\frac{1}{2}.
\end{align*}
By Markov's Inequality, and Proposition \ref{boundedprocessesgen}, we have

\[\IP\left[\tau_R>T\right]\leq\frac{\IE\left[\sup_{t\in[0,T]}|Q(t)|\right]}{R}\leq\frac{C}{R}\to0,\qquad R\to\infty.\]
By Proposition \ref{boundedprocessesgen}, we have

\begin{align*}
    &\IE\left[\sup_{t\in[0,T]}\left|q^{(\varepsilon)}(t)-Q(t)\right|^{2n}+\sup_{t\in[0,T]}\left|p^{(\varepsilon)}(t)-P(t)\right|^{2n}\right]^{\frac{1}{2}}\\
    \leq&\IE\left[\sup_{t\in[0,T]}\left|q^{(\varepsilon)}(t)\right|^{2n}+\sup_{t\in[0,T]}\left|Q(t)\right|^{2n}+\sup_{t\in[0,T]}\left|p^{(\varepsilon)}(t)\right|^{2n}+\sup_{t\in[0,T]}\left|P(t)\right|^{2n}\right]< C.
\end{align*}
Thus, it holds that
\begin{align*}
    \IE\Big[\Big(\sup_{t\in[0,T]}\left|q^{(\varepsilon)}(t)-Q(t)\right|^n+\sup_{t\in[0,T]}\left|p^{(\varepsilon)}(t)-P(t)\right|^n\Big) \mathbbm{1}\left\{\tau_R<T\right\}\Big]\le \frac{\tilde{C}}{\sqrt{R}},
\end{align*}
where the positive constant $\tilde{C}$ is independent of both $\varepsilon$ and $R$. By a similar argument, we also obtain
 \begin{align*}
     \IE\Big[\Big(\sup_{t\in[0,T]}\left|q^{(\varepsilon)}(t)-q(t)\right|^n+\sup_{t\in[0,T]}\left|p^{(\varepsilon)}(t)-p(t)\right|^n\Big)\mathbbm{1}\left\{\tau_R^\epsilon<T\right\}\Big]\le \frac{\tilde{C}}{\sqrt{R}}.
 \end{align*}
Altogether, we get
\begin{align*}
    &\IE\Big[\sup_{t\in[0,T]}\left|q^{(\varepsilon)}(t)-Q(t)\right|^n+\sup_{t\in[0,T]}\left|p^{(\varepsilon)}(t)-P(t)\right|^n\Big]\le C(R)\varepsilon^{\frac{n}{2}}+\frac{\tilde{C}}{\sqrt{R}}.
\end{align*}
The result follows by sending $R>0$ to infinity and then shrinking $\varepsilon$ to zero. The proof is thus finished.
\end{proof}
\subsection{Newtonian limit} \label{sec:white-noise-newtonian:newtonian}
In this section, we prove Theorem \ref{relativisticnewtonianlimit} giving the validity of the approximation of the GRLE \eqref{GRLEpqz} by the GLE \eqref{GLEpqz} in the Newtonian limit as the light speed parameter $c$ tends to infinity. Thanks to Proposition \ref{boundedprocessesgen}, the approach of Theorem \ref{relativisticnewtonianlimit} is similar to that of the white noise regime presented in Section \ref{sec:white-noise-newtonian:white-noise}. More specifically, we proceed to establish limit \eqref{lim:newtonian} under the extra condition that $\nabla U $ is Lipschitz. This is demonstrated through Proposition \ref{newtonianlimitlipschitz} below. Then, the uniformity with respect to $c$ on the energy estimates from Proposition \ref{boundedprocessesgen} will allow us to overcome the Lipschitz restriction, thereby recovering limit \eqref{lim:newtonian} for the general class of potentials $U$ as in Assumption \ref{UAssumptions}.

\begin{proposition}
\label{newtonianlimitlipschitz}
Let $(q^{(c)},p^{(c)},z^{(c)})$ be the solution to \eqref{GRLEpqz}. Then, under Assumptions \ref{UAssumptions} and \ref{boundedinitialconditionsrel}, and that $\nabla U$ is Lipschitz continuous, for all $T>0$ and $n\in\IN$, the following holds
\begin{align} \label{lim:newtonian:Lipschitz}
    \IE\left[\sup_{t\in[0,T]}\left|q^{(c)}(t)-q(t)\right|^n+\sup_{t\in[0,T]}\left|p^{(c)}(t)-p(t)\right|^n+\sum_{i=1}^M\sup_{t\in[0,T]}\left|z_i^{(c)}(t)-z_i(t)\right|^n\right]\leq C \epsilon^\frac{n}{2},
\end{align}
for some constant $C=C(U,T,\lambda,\alpha,n,M)>0$ independent of $\epsilon =1/c^2$. In the above, $(q,p,z)$ is the process solving the non-relativistic generalized Langevin dynamics \eqref{GLEpqz}.
\end{proposition}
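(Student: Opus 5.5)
The plan is a synchronous coupling with a Gronwall argument. Drive the relativistic system \eqref{GRLEpqz} (with $m=1$ and $K(p)=\epsilon^{-1}\sqrt{1+\epsilon|p|^2}$) and the classical system \eqref{GLEpqz} by the same Brownian motions $W,W_1,\dots,W_M$, from the same initial data. Subtracting the equations, all noise terms cancel, so the differences $e_q=q^{(c)}-q$, $e_p=p^{(c)}-p$ and $e_{z_i}=z_i^{(c)}-z_i$ satisfy pathwise random ODEs.

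The key splitting is
\begin{align*}
\nabla K(p^{(c)})-p=\big(\nabla K(p^{(c)})-\nabla K(p)\big)+\big(\nabla K(p)-p\big).
\end{align*}
The first bracket is controlled by $|e_p|$, using Lemma \ref{relativistickineticenergylipschitzcontinuous}. That lemma gives a Lipschitz constant $d/m$, which is independent of $c$; this uniformity is essential. The second bracket is an explicit consistency error:
\begin{align*}
\Big|p-\frac{p}{\sqrt{1+\epsilon|p|^2}}\Big|=|p|\,\frac{\sqrt{1+\epsilon|p|^2}-1}{\sqrt{1+\epsilon|p|^2}}\le \tfrac12\epsilon|p|^3 .
\end{align*}
One can also use the cruder bound $\min(|p|,\sqrt{\epsilon}|p|^2)\le\sqrt{\epsilon}|p|^2$, which already gives the claimed order $\epsilon^{n/2}$.

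With these bounds, the error equations give, for every $t\le T$,
\begin{align*}
|e_q(t)|+|e_p(t)|+\sum_{i=1}^M|e_{z_i}(t)|\le C\int_0^t\Big(|e_q|+|e_p|+\sum_{i=1}^M|e_{z_i}|\Big)\df s+C\sqrt{\epsilon}\int_0^t|p(s)|^2\,\df s .
\end{align*}
This uses the Lipschitz continuity of $\nabla U$ for the term $\nabla U(q^{(c)})-\nabla U(q)$, linearity in $z$ for the terms $\lambda_i e_{z_i}$ and $\alpha_i e_{z_i}$, and the splitting above in the three places where $\nabla K$ appears (the $q$-equation, the $\gamma$-friction, and the $z$-equations).

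Next, define $\chi(T)=\IE[\sup_{[0,T]}(|e_q|^n+|e_p|^n+\sum_i|e_{z_i}|^n)]$. Raise the inequality to the $n$-th power, take suprema and expectations, and apply H\"older in time to obtain
\begin{align*}
\chi(T)\le C\int_0^T\chi(t)\,\df t+C\epsilon^{n/2}\,\IE\Big[\sup_{[0,T]}|p|^{2n}\Big].
\end{align*}
By \eqref{GLEExpectationbound} of Proposition \ref{boundedprocessesgen}, the classical GLE has finite moments of every order, so the last expectation is a constant depending only on $T$ and $n$. Gronwall's lemma then gives $\chi(T)\le C\epsilon^{n/2}$, which is \eqref{lim:newtonian:Lipschitz}.

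The constants must be independent of $\epsilon$. They involve only the Lipschitz constants of $\nabla U$ and $\nabla K$ (uniform in $c$), the parameters $\lambda_i,\alpha_i,\gamma$, and the moments of the limit process, which does not depend on $c$. The main point needing care is the consistency term. It requires moments of the \emph{classical} process $p$, not of $p^{(c)}$, so the splitting must be arranged in the order written above. If instead one splits as $(\nabla K(p^{(c)})-p^{(c)})+(p^{(c)}-p)$, one would need moment bounds on $p^{(c)}$ uniform in $c$, which for the GRLE are only provided by Proposition \ref{boundedprocessesgen} in a less direct form.
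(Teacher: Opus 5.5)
Your proof is correct. It uses the same synchronous-coupling and Gronwall scheme as the paper, but with the opposite decomposition of the consistency error.

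The paper writes $\nabla K(p^{(c)})-p=(\nabla K(p^{(c)})-p^{(c)})+(p^{(c)}-p)$. Its error term is therefore $\sqrt{\epsilon}|p^{(c)}|^2$, and it must appeal to the uniform-in-$c$ moment bounds for the GRLE from Proposition \ref{boundedprocessesgen}. It also integrates the $z$-equations, copying the white-noise proof, and so carries extra terms in $z_i^{(c)}(t)-z_i^{(c)}(0)$. You instead put $e_{z_i}$ directly into the Gronwall functional, which is cleaner.

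Your splitting through $\nabla K(p)$ needs only the $c$-uniform Lipschitz bound on $\nabla K$ and moments of the $c$-independent classical process. Since $\nabla U$ is assumed Lipschitz here, \eqref{GLEpqz} has globally Lipschitz drift and additive noise, so those moments are standard. You do not really need Proposition \ref{boundedprocessesgen}; note also that \eqref{GLEExpectationbound} is, as literally stated, attached to \eqref{GRLEpqz}. Your sharper bound $\tfrac12\epsilon|p|^3$ even yields the rate $C\epsilon^{n}$ rather than $C\epsilon^{n/2}$.

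The paper's choice costs nothing in the larger picture. The uniform-in-$c$ moment bounds for the GRLE are needed anyway in the stopping-time argument for Theorem \ref{relativisticnewtonianlimit}.

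One minor technical point: Gronwall applied to $\chi$ requires $\chi(T)<\infty$ for each fixed $c$. This holds because both systems have globally Lipschitz coefficients. Alternatively, apply Gronwall pathwise to $|e_q|+|e_p|+\sum_i|e_{z_i}|$ before taking expectations, which avoids the issue entirely.
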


\begin{proof}
In the below, $C$ is independent of $c$ and may change from line to line. Recall that we assume $q^{(c)}$, $p^{(c)}$ and $z^{(c)}$ have the same initial conditions of $q$, $p$ and $z$ respectively. Note that for all $p\in\IR^d$, we have
\begin{equation*}
   \left| \frac{p}{\sqrt{1+{\epsilon}|p|^2}}-p\right|=\frac{{\epsilon} |p|^3}{\left(1+\sqrt{1+\epsilon|p|^2}\right)\sqrt{1+{\epsilon}|p|^2}}\leq\sqrt{{\epsilon}}|p|^2.
\end{equation*}
By the triangle inequality, it follows that
\begin{align}
\label{differenceofrelandclas}
    |\nabla  K(p^{(c)}(t))-p(t)|
  &=  \Big|\frac{p^{(c)}(t)
}{\sqrt{1+\epsilon|p^{(c)}(t)|^2}}-p(t)\Big|\notag\\&\leq \Big|\frac{p^{(c)}(t)
}{\sqrt{1+\epsilon|p^{(c)}(t)|^2}}-p^{(c)}(t)\Big|+|p^{(c)}(t)-p(t)|\notag \\&\leq|p^{(c)}(t)-p(t)|+\sqrt{\epsilon}|p^{(c)}(t)|^2.
\end{align}
By \eqref{differenceofrelandclas}, \eqref{qrelmarkov} and \eqref{qclasmarkov}, we have
\begin{equation}
    |q^{(c)}(t)-q(t)|=\left|\int_0^t\nabla  K(p^{(c)}(s))-p(s)\,\df s\right| \leq C\int_0^t(\sqrt{{\epsilon}}|p^{(c)}(s)|^2+|p^{(c)}(s)-p(s)|)\, \df s.
\end{equation}
From \eqref{zclasmarkov} and \eqref{zrelmarkov}, we get
\begin{equation*}
    \int_0^t z_{k}^{(c)}(s)-z_k(s)\df s= \frac{1}{\alpha_i}(z_k(t)-z_k(0))-\frac{1}{\alpha_i}(z_K (t)-z_K (0))-\frac{\lambda_i}{\alpha_i}\int_0^t\left(\nabla K(p^{(c)}(s))-p(s)\right)\df s.
\end{equation*}
Also, from \eqref{pclasmarkov} and \eqref{prelmarkov}, we have
\begin{align}
        p^{(c)}(t)-p(t)=&\int_0^t\nabla U(q(s))-\nabla U(q^{(c)}(s))\,\df s+\int_0^tp(s)-\frac{p^{(c)}(s)}{\sqrt{1+{\epsilon} |p^{(c)}(s)|^2}}\,\df t\notag\\&+\sum_{i=1}^M\lambda_i \int_0^t z_{i}^{(c)}(s)-z_i(s)\,\df s.
\end{align}
We define
\[
\chi_2(T)=\IE\Big[\sup_{t\in[0,T]}\Big(\left|q^{(c)}(t)-q(t)\right|^n+\left|p^{(c)}(t)-p(t)\right|^n+\sum_{i=1}^M\left|z^{(c)}(t)-z(t)\right|^n\Big)\Big].
\]
Applying the Lipschitz continuity of $\nabla K $ and $\nabla U$, as well as \eqref{differenceofrelandclas}, we can estimate $\chi_2(T)$ as follows:
\begin{align*}
\chi_2(T)&\leq \int_0^T\IE\Big[\sup_{s\in[0,t]}|q^{(c)}(s)-q(s)|^n\Big]\df t \\
    &+C\Big(1+\sum_{i=1}^M\frac{\lambda_i^2}{\alpha_i}\Big)T^{n-1}\int_0^T\IE\Big[\sup_{s\in[0,t]}|p^{(c)}(s)-p(s)|^n\Big]\df t\\
    &+C{\epsilon}^\frac{n}{2}\Big(1+\sum_{i=1}^M\frac{\lambda_i^2}{\alpha_i}\Big)T^{n-1}\int_0^T\IE\Big[\sup_{s\in[0,t]}|p^{(c)}(s)|^{2n}\Big]\df t\\
    &+C{\epsilon}^\frac{n}{2}\sum_{i=1}^M\left(\frac{\lambda_i}{\alpha_i}\right)^n\IE\Big[\sup_{t\in[0,T]}|z^{(c)}_i(t)-z^{(c)}_i(0)|^n\Big]\\
     &+C{\epsilon}^\frac{n}{2}\sum_{i=1}^M\left(\frac{\lambda_i}{\alpha_i}\right)^n\IE\Big[\sup_{t\in[0,T]}|z_i(t)-z_i(0)|^n\Big]\\
     &+\sum_{i=1}^M\alpha_iT^{n-1}\int_0^T\IE\Big[\sup_{s\in[0,t]}|z_i^{(c)}(s)-z_i(s)|^n\Big]\df s.
\end{align*}
In view of Proposition \ref{boundedprocessesgen}, we deduce that
\begin{align*}
    \chi_2(T)\leq &C\int_0^T\chi_2(t)\,\df t\\+&C{\epsilon}^{\frac{n}{2}}\Big(\sum_{i=1}^M\IE\Big[\sup_{t\in[0,T]}|z_i^{(c)}(t)-z_i^{(c)}(0)|^n\Big]+\IE\Big[\sup_{t\in[0,T]}|z_i(t)-z_i(0)|^n\Big]+1\Big).
\end{align*}
An application of Gronwall's lemma produces the bound
\begin{align*}
    \chi_2(T)&\leq C{\epsilon}^\frac{n}{2}\Big(\sum_{i=1}^M\IE\Big[\sup_{t\in[0,T]}|z_i^{(c)}(t)-z_i^{(c)}(0)|^n\Big]+\sum_{i=1}^M\IE\Big[\sup_{t\in[0,T]}|z_i(t)-z_i(0)|^n\Big]+1\Big)\\&+C{\epsilon}^{\frac{n}{2}}\int_0^T\Big(\sum_{i=1}^M\IE\Big[\sup_{s\in[0,t]}|z_i^{(c)}(s)-z_i^{(c)}(0)|^n\Big]+\sum_{i=1}^M\IE\Big[\sup_{s\in[0,t]}|z_i(s)-z_i(0)|^n\Big]+1\Big)\df t,
\end{align*}
whence
\begin{align*}
    \chi_2(T)&\leq C{\epsilon}^\frac{n}{2}\Big(\sum_{i=1}^M\IE\Big[\sup_{t\in[0,T]}|z_i^{(c)}(t)-z_i^{(c)}(0)|^n\Big]+\sum_{i=1}^M\IE\Big[\sup_{t\in[0,T]}|z_i(t)-z_i(0)|^n\Big]+1\Big),
\end{align*}
where $C=C(T)$ is independent of $\epsilon$. We invoke Proposition \ref{boundedprocessesgen} once again to deduce estimate \eqref{lim:newtonian:Lipschitz}, as claimed.
\end{proof}

Lastly, let us conclude Theorem \ref{relativisticnewtonianlimit} by combining the auxiliary results from Propositions \ref{boundedprocessesgen} and \ref{newtonianlimitlipschitz}.

\begin{proof}[Proof of Theorem \ref{relativisticnewtonianlimit}]
    Through applications of Propositions \ref{boundedprocessesgen}, \ref{newtonianlimitlipschitz}, the proof of Theorem \ref{relativisticnewtonianlimit} is can be carried out by making use of suitable stopping times of exiting bounded sets. Since the argument is similar to that of Theorem \ref{relativisticwhitenoiselimit}, its detail is thus omitted.
\end{proof}

\appendix

\section{Ergodicity for a general SDE}
\label{Ergodicity Results}
\label{proofofpolynomialmixing}
In this appendix, we briefly review the framework of \cite{hairer2009hot} for obtaining rate of convergence to the equilibrium for a general stochastic differential equation. We have employed this approach to establish Theorem \ref{polynomialerogicity} giving polynomial mixing for the GRLE \eqref{GRLEpqz}.

We denote $X$ to be the solution of the Itô SDE:
\begin{equation}
\label{general SDE}
\df X(t)=a(X(t))\,\df t+\sigma(X(t))\, \df W(t),\quad X(0)=x_0\in\IR^d.
\end{equation}
Let $\CL$ denote the infinitesimal generator for $X$. Following \cite[Section 3.2]{hairer2009hot}, we recall the notion of $\phi$-Lyapunov functions: 
\begin{definition}
\label{philyapunov}
    Given $\phi:\IR^+\to\IR^+$, a function $V\in C^2(\IR^d;\IR^+)$ is called \emph{$\phi$-Lyapunov} if
\[\CL V(x)\leq -\phi\left(V(x)\right),\]
outside some compact set, and $\lim_{|x|\to\infty}V(x)=\infty$.

\end{definition}
We introduce the notation:
\begin{align*}
    X_0=\sum_{i=1}^da_i(x)\frac{\delta}{\delta x_i},\quad
    X_k=\sum_{i=1}^k \sigma_{ik}(x)\frac{\delta}{\delta x_i}.
\end{align*}
We also define the vector fields recursively as:
\begin{align*}
    \mathcal{A}_0=\Lie\left(X_i\right)_{i=1,...,d},\quad
    \mathcal{A}_n=\Lie\left([X_0,X]\right)_{X\in \mathcal{A}_{n-1}},
\end{align*}
and define
\[\mathcal{A}=\Lie(\mathcal{A}_n)_{n\in\IN_0}.\]
\begin{assumption}[\cite{pavliotis_stochastic_2014}, Assumption 6.3]
\label{Hörmandersconditionplain}
For all $x\in\IR^{d}$, the following identity holds
\[\text{span}\left\{X(x):X\in\mathcal{A} \right\}=\IR^{d}.\]
\end{assumption}
In the literature, Assumption \ref{Hörmandersconditionplain} is referred to as Hörmander's condition, which guarantees that the solution to \eqref{general SDE}  admits a smooth probability density function.

\begin{assumption}
\label{controlproblemplain}
For any initial condition $X(0)\in\IR^{d}$, there exists a path $U$, which satisfies the ordinary differential equation,
        \[\df X(t)=a(X(t))\,\df t+\sigma(X(t))\, \df U(t).\]
\end{assumption}
Assumption \ref{controlproblemplain} ensures that the origin is reachable given any starting point.

\begin{theorem}\cite[Theorem 3.5]{hairer2009hot}
\label{hairerbound}
Suppose there exists an increasing, smooth, strictly sublinear $\phi$-Lyapunov function $V$, and that Assumptions \ref{Hörmandersconditionplain} and \ref{controlproblemplain} hold. Let $(P_t(x,\cdot))_{t\geq0}$ denote the transition probability of $X$ given in \eqref{general SDE}. Then there exist a unique invariant measure $\pi$ and constant $\zeta>0$ such that

    \[\|P_t(x,\cdot)-\pi\|_{\text{TV}}\leq \zeta_0 V(x)\psi(t),\]
where
\[
\psi(t)=\frac{1}{\phi\ \circ H^{-1}_{\phi}(t)},\quad\text{and}\quad H_{\phi}(t)=\int_1^t\frac{1}{\phi(s)}\,\df s.
\]
\end{theorem}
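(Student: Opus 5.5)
The statement is Theorem~\ref{hairerbound}, quoted from \cite[Theorem 3.5]{hairer2009hot}, which we invoke rather than re-prove. The plan is therefore to reconstruct the standard subgeometric Harris argument, in four steps.

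\textbf{Step 1: a minorization on sublevel sets.} Assumption~\ref{Hörmandersconditionplain} and H\"ormander's theorem \cite{hormander1967hypoelliptic} give that $P_t(x,\cdot)$ has a smooth density, jointly continuous in $x$. Assumption~\ref{controlproblemplain} and the Stroock--Varadhan support theorem \cite{stroock1972degenerate} give that from every $x$ the process reaches any neighbourhood of the origin with positive probability. Continuity of the density in $x$ and compactness then yield, for each $R>0$, a time $t_R$, a constant $\delta_R>0$ and a probability measure $\nu$ such that
\[
P_{t_R}(x,\cdot)\ge \delta_R\,\nu(\cdot)\quad\text{for all }x\in\{V\le R\}.
\]
This makes every sublevel set of $V$ small. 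It also makes the chain irreducible and aperiodic, which gives uniqueness of any invariant measure.

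\textbf{Step 2: hitting-time moments from the Lyapunov condition.} From $\CL V\le-\phi(V)+C\mathbbm{1}_K$ we want subgeometric bounds on the return time $\tau$ to a large sublevel set. Following \cite{douc2009subgeometric,fort2005subgeometric}, consider the time-dependent function $W(t,x)=H_\phi^{-1}(H_\phi(V(x))+t)$. Concavity of $\phi$ (strict sublinearity) gives $(\partial_t+\CL)W\le C'$ outside $K$. Dynkin's formula then yields
\[
\IE_x\big[H_\phi^{-1}(\tau)\big]\lesssim V(x),
\qquad
\IE_x\Big[\int_0^\tau \phi(V(X_s))\,\df s\Big]\lesssim V(x).
\]
Existence of an invariant measure $\pi$ follows from Krylov--Bogoliubov, since $\pi(\phi(V))<\infty$.

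\textbf{Step 3: coupling.} Couple two copies of the process, one started at $x$ and one at $\pi$. Run them independently until both lie in the small set simultaneously, then attempt to merge using the minorization of Step~1, which succeeds with probability at least $\delta_R$. Iterating gives a coupling time $T_c$ with tails of order $\IP(T_c>t)\lesssim V(x)/\phi(H_\phi^{-1}(t))=V(x)\psi(t)$. The coupling inequality then delivers the claimed total-variation bound.

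\textbf{Main obstacle.} The delicate step is controlling the joint return time of the product chain and summing the geometric number of failed merging attempts without losing the rate. The excursions are only subgeometrically controlled, so Jensen-type arguments using the concavity of $\phi\circ H_\phi^{-1}$ are needed to combine the excursion moments. This is exactly the content of \cite[Theorem 3.5]{hairer2009hot}, and we defer to it.
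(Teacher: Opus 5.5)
Your proposal matches the paper's route: the paper gives no proof either and simply quotes the result from \cite{hairer2009hot}. Your outline is the standard argument behind that citation: a minorization on sublevel sets of $V$ from H\"ormander's theorem plus the support theorem, the Douc--Fort--Guillin function $H_\phi^{-1}(H_\phi(V)+t)$ for subgeometric hitting-time moments, and then coupling. Two small points of precision. First, Step 2 needs $\phi$ to be concave, which strict sublinearity alone does not give; it does hold for the $\phi(t)=\zeta t^{1-\frac{1}{2n}}$ used in the paper. Second, the rate $\psi=1/(\phi\circ H_\phi^{-1})$, rather than $1/H_\phi^{-1}$, arises because the copy started from $\pi$ is controlled only through $\pi(\phi(V))<\infty$, not by integrating your bound $\IE_x[H_\phi^{-1}(\tau)]\lesssim V(x)$ against $\pi$, which would need $\pi(V)<\infty$.
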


\section{A Non-Singular Matrix}
\label{appendix2}
In this appendix, through Lemma \ref{nonsingularmatrix} below, we provide sufficient conditions guaranteeing the non-singularity of certain matrices. The result of which is used in the proof of Propositions \ref{Hörmanderscondition} and \ref{controlproblemstatement} that are employed to construct Lyapunov functions for \eqref{GRLEpqz}.

\begin{lemma}
\label{nonsingularmatrix}
    Let ${\bf{B}}=(a_{ij})_{1\leq i,j\leq n}$ be a $n \times n$ matrix be given in the form,
\[{\bf{A}}=\begin{pmatrix}
    \lambda+b_1^2 & b_1b_2 & \cdots &b_1b_n\\
    b_1b_2 & \lambda+b_2^2 & \cdots&b_2b_n\\
    \vdots & \vdots &\ddots &\vdots\\
    b_1b_n& b_2b_n & \cdots &\lambda+b_n^2
\end{pmatrix}\]
where $b=(b_1,\dots,b_n)\in\IR^n$ and $\lambda\in\IR \setminus \{0\}$. Suppose further that 
\begin{equation}
\label{matrixcondition}
   \lambda+\sum_{i=1}^nb_i^2\neq0. 
\end{equation}
Then $\bf{B}$ is non-singular.
\end{lemma}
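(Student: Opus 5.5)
The matrix in the statement can be written as $\mathbf{B}=\lambda \mathbf{I}_n + bb^T$, a rank-one perturbation of a nonzero multiple of the identity. The plan is to compute its spectrum explicitly (equivalently, its determinant) and read off non-singularity from the two hypotheses $\lambda\neq 0$ and \eqref{matrixcondition}.

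First, $\mathbf{B}$ is symmetric, and it acts very simply on the decomposition $\IR^n=\mathrm{span}\{b\}\oplus b^\perp$.
For any $v\in b^\perp$ we have $\mathbf{B}v=\lambda v+b(b^Tv)=\lambda v$.
Also $\mathbf{B}b=\lambda b+b|b|^2=(\lambda+|b|^2)b$.
So if $b\neq 0$, the eigenvalues of $\mathbf{B}$ are $\lambda$ with multiplicity $n-1$ and $\lambda+\sum_{i=1}^n b_i^2$ with multiplicity one. If $b=0$, then $\mathbf{B}=\lambda\mathbf{I}_n$.
In either case,
\[
\det \mathbf{B}=\lambda^{n-1}\Big(\lambda+\sum_{i=1}^n b_i^2\Big).
\]
The same formula follows from the matrix determinant lemma, $\det(\lambda \mathbf{I}+bb^T)=\lambda^n(1+\lambda^{-1}b^Tb)$.

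Since $\lambda\neq0$ and \eqref{matrixcondition} holds, $\det\mathbf{B}\neq 0$, so $\mathbf{B}$ is non-singular.

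If an explicit inverse is wanted for the applications in Propositions \ref{Hörmanderscondition} and \ref{controlproblemstatement}, use the Sherman--Morrison formula:
\[
\mathbf{B}^{-1}=\frac{1}{\lambda}\Big(\mathbf{I}_n-\frac{bb^T}{\lambda+|b|^2}\Big).
\]
It can be checked directly by multiplying by $\lambda\mathbf{I}_n+bb^T$.

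There is no real obstacle. The only point needing care is the degenerate case $b=0$, where the eigenvector argument must be replaced by $\mathbf{B}=\lambda\mathbf{I}_n$. The determinant formula covers both cases uniformly, so I would present that route.

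For the applications one also has to check that the hypotheses hold. In the Hörmander matrix, $\lambda=-\frac1\epsilon-|p|^2$, so $\lambda+|b|^2=-\frac1\epsilon\neq0$. In the control problem, $\lambda=\frac1\epsilon-|k_p|^2$, which is positive because $|k_p|<1/\sqrt\epsilon$ there, and then $\lambda+|b|^2=\frac1\epsilon\neq0$.
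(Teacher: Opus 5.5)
Your proof is correct, but it takes a more direct route than the paper's.

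The paper writes $\mathbf{B}=\lambda\mathbf{I}_n+\mathrm{diag}(b)\,\mathbf{1}b^{T}$ (with $\mathbf{1}$ the all-ones vector), a product of two $n\times n$ factors. It uses the Weinstein--Aronszajn identity only to swap these factors, which gives $\mathbf{B}_1=\lambda\mathbf{I}_n+\mathbf{1}(b_1^2,\dots,b_n^2)$. It then subtracts the first row from the others and shows that $\mathbf{B}_2a=0$ forces $a_1=\dots=a_n$ (this uses $\lambda\neq0$) and then $a=0$ (this uses $\lambda+|b|^2\neq0$). No determinant is ever computed.

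You instead use the rank-one structure $bb^T$ directly, either through the invariant splitting $\mathrm{span}\{b\}\oplus b^\perp$ or through the matrix determinant lemma. The latter is the rank-one instance of the same identity, and it collapses everything to a scalar, giving the closed form $\det\mathbf{B}=\lambda^{n-1}(\lambda+|b|^2)$.

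Your route buys more:
\begin{itemize}
\item It shows the two hypotheses are exactly necessary and sufficient when $n\ge2$.
\item Sherman--Morrison supplies the explicit $\mathbf{B}^{-1}$ that the paper actually invokes in the H\"ormander argument ($a=-\frac{1}{\epsilon}\mathbf{B}^{-1}e_l$) and needs in order to solve for $q''(0)$ in the control problem.
\end{itemize}

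The paper's argument has the merit of being pure row manipulation with no spectral reasoning. Its detour through $\mathbf{B}_1,\mathbf{B}_2$ is avoidable, though: pairing $\lambda a+(b^Ta)b=0$ with $b$ gives $(\lambda+|b|^2)\,b^Ta=0$ directly.

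Your check of the hypotheses in the two applications is also correct. This reads the paper's $k_p$ as $\nabla K(p(0))$, so that $|k_p|<1/\sqrt{\epsilon}$.
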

\begin{proof}
We decompose $\bf{B}$ as follows:

\[{\bf{B}}=\lambda{ {{\bf{I}}} }_n + \begin{pmatrix}
    b_1 &0 &\cdots &0\\
    0 &b_2& \cdots &0\\
    \vdots &\vdots &\ddots &\vdots\\
    0 & 0 &\cdots &b_n
\end{pmatrix}
    \begin{pmatrix}
    b_1 &b_2 &\cdots &b_n\\
    b_1 &b_2& \cdots &b_n\\
    \vdots &\vdots &\ddots &\vdots\\
    b_1 & b_2&\cdots &b_n
\end{pmatrix}.
\]
By applying the Weinstein-Aronszajn identity (see \cite[p.271]{pozrikidis_introduction_2014}), we have

\[\det {\bf{B}}= \det {{\bf{B}}}_1\]
where

\[{\bf{B}}_1=
    \begin{pmatrix}
        \lambda+b_1^2 & b_2^2 &\cdots &b_n^2\\
        b_1^2 & \lambda+b_2^2 &\cdots &b_n^2\\
        \vdots & \vdots & \ddots &\vdots\\
        b_1^2 & b_2^2 & \cdots &\lambda +b_n^2
    \end{pmatrix}.
\]
By subtracting the first row from the other rows, we also have
\[\det {\bf{B}}= \det {{\bf{B}}}_2\]
where
\begin{align}
\label{determinantinequality}
   \bf{B}_2=\begin{pmatrix}
    \lambda +b_1^2& b_2^2 & \cdots & b_n^2\\
    -\lambda & \lambda & \cdots & 0\\
    \vdots & \vdots & \ddots & \vdots \\
   -\lambda & 0&\cdots & \lambda
\end{pmatrix}. 
\end{align}
We claim that by virtue of \eqref{matrixcondition}, ${\bf{B}}_2$ is non-singular. Indeed, observe that if $a\in\IR^n$ satisfies \[{\bf{B}}_2a=0,\]
then, from the the second row to the $n$-th row of ${\bf{B}}_2$, we get $a_1=...=a_n$. It follows that the first row, together with \eqref{matrixcondition}, implies that $a_1=...=a_n=0$. As a consequence, $\bf{B}_2$ must be non-singular, and thus so is $\bf{B}$. This completes the proof.
\end{proof}

\section*{Acknowledgement} The research of M. H. Duong was supported by EPSRC Grant EP/Y008561/1.
\sloppy
\printbibliography

\end{document}

%% file: diagram.tex
\tikzset{every picture/.style={line width=0.75pt}} 

\begin{tikzpicture}[x=0.75pt,y=0.75pt,yscale=-1,xscale=1]

\draw   (101.67,188.67) -- (211.58,188.67) -- (211.58,288.58) -- (101.67,288.58) -- cycle ;

\draw   (448.66,189.17) -- (558.57,189.17) -- (558.57,289.08) -- (448.66,289.08) -- cycle ;

\draw   (101.17,349.42) -- (211.08,349.42) -- (211.08,449.33) -- (101.17,449.33) -- cycle ;
\draw   (448.66,349.42) -- (558.57,349.42) -- (558.57,449.33) -- (448.66,449.33) -- cycle ;

\draw    (211,241) -- (447,241) ;
\draw [shift={(449,241)}, rotate = 180] [color={rgb, 255:red, 0; green, 0; blue, 0 }  ][line width=0.75]    (10.93,-3.29) .. controls (6.95,-1.4) and (3.31,-0.3) .. (0,0) .. controls (3.31,0.3) and (6.95,1.4) .. (10.93,3.29)   ;
\draw    (211.5,400.5) -- (447,400.5) ;
\draw [shift={(449,400.5)}, rotate = 180] [color={rgb, 255:red, 0; green, 0; blue, 0 }  ][line width=0.75]    (10.93,-3.29) .. controls (6.95,-1.4) and (3.31,-0.3) .. (0,0) .. controls (3.31,0.3) and (6.95,1.4) .. (10.93,3.29)   ;
\draw    (155.5,289) -- (155.5,347) ;
\draw [shift={(155.5,349)}, rotate = 270] [color={rgb, 255:red, 0; green, 0; blue, 0 }  ][line width=0.75]    (10.93,-3.29) .. controls (6.95,-1.4) and (3.31,-0.3) .. (0,0) .. controls (3.31,0.3) and (6.95,1.4) .. (10.93,3.29)   ;
\draw    (507.5,289) -- (507.5,347) ;
\draw [shift={(507.5,349)}, rotate = 270] [color={rgb, 255:red, 0; green, 0; blue, 0 }  ][line width=0.75]    (10.93,-3.29) .. controls (6.95,-1.4) and (3.31,-0.3) .. (0,0) .. controls (3.31,0.3) and (6.95,1.4) .. (10.93,3.29)   ;
\draw  [draw opacity=0] (1,170) -- (658,170) -- (658,469) -- (1,469) -- cycle ;

\draw (466.61,209.62) node [anchor=north west][inner sep=0.75pt]   [align=left] {\begin{minipage}[lt]{51.47pt}\setlength\topsep{0pt}
\begin{center}
Relativistic\\Langevin\\Equation
\end{center}

\end{minipage}};
\draw (472.11,380.38) node [anchor=north west][inner sep=0.75pt]   [align=left] {\begin{minipage}[lt]{44.13pt}\setlength\topsep{0pt}
\begin{center}
Langevin\\Equation
\end{center}

\end{minipage}};
\draw (114.63,369.88) node [anchor=north west][inner sep=0.75pt]   [align=left] {\begin{minipage}[lt]{57.73pt}\setlength\topsep{0pt}
\begin{center}
Generalized\\Langevin\\Equation
\end{center}

\end{minipage}};
\draw (102,310) node [anchor=north west][inner sep=0.75pt]   [align=left] {$\displaystyle c\rightarrow \infty $};
\draw (454.5,310) node [anchor=north west][inner sep=0.75pt]   [align=left] {$\displaystyle c\rightarrow \infty $};
\draw (314.75,213.5) node [anchor=north west][inner sep=0.75pt]   [align=left] {$\displaystyle \varepsilon \rightarrow 0$};
\draw (314.75,373.5) node [anchor=north west][inner sep=0.75pt]   [align=left] {$\displaystyle \varepsilon \rightarrow 0$};
\draw (165.5,311) node [anchor=north west][inner sep=0.75pt]   [align=left] {Theorem \ref{relativisticnewtonianlimit}};
\draw (518,311) node [anchor=north west][inner sep=0.75pt]   [align=left] {\cite[Theorem 2.6]{duong_trend_2024}};
\draw (280.75,252) node [anchor=north west][inner sep=0.75pt]   [align=left] {Theorem \ref{relativisticwhitenoiselimit}


};
\draw (270.75,413) node [anchor=north west][inner sep=0.75pt]   [align=left] {\cite[Theorem 2.6]{ottobre_asymptotic_2011}
};
\draw (115.13,198.62) node [anchor=north west][inner sep=0.75pt]   [align=left] {\begin{minipage}[lt]{57.73pt}\setlength\topsep{0pt}
\begin{center}
Generalized\\Relativistic\\Langevin\\Equation
\end{center}

\end{minipage}};

\end{tikzpicture}